\newcommand{\abs}[1]{\lvert#1\rvert}
\declaretheoremstyle[notefont=\bfseries,notebraces={}{},
headpunct={},postheadspace=1em]{mystyle}
\declaretheorem[style=mystyle,numbered=no,name=Theorem]{thm}
\newtheorem{theorem}{Theorem}
\newtheorem{lemma}[theorem]{Lemma}
\newtheorem{corollary}[theorem]{Corollary}
\newtheorem{definition}[theorem]{Definition}
\newtheorem{remark}[theorem]{Remark}
\begin{document}

\begin{frontmatter}

\title{Degree bounds for Gr\"{o}bner bases of modules}

\author{Yihui Liang}
\address{Department of Mathematics, Purdue University, 150 N. University Street, West Lafayette, IN47907, USA}
\ead{liang226@purdue.edu}
\ead[url]{www.math.purdue.edu/~liang226/}

\begin{abstract}
	Let $F$ be a non-negatively graded free module over a polynomial ring $\mathbb{K}[x_1,\dots,x_n]$ generated by $m$ basis elements. Let $M$ be a submodule of $F$ generated by elements with degrees bounded by $D$ and dim $F/M=r$. We prove that if $M$ is graded, the degree of the reduced Gr\"{o}bner basis of $M$ for any term order is bounded by $2\left[1/2((Dm)^{n-r}m+D) \right]^{2^{r-1}}$. If $M$ is not graded, the bound is $2\left[1/2((Dm)^{(n-r)^2}m+D) \right]^{2^{r}}$. This is a generalization of Dub\'{e} (1990) and Mayr-Ritscher (2013)'s bounds for ideals in a polynomial ring.
\end{abstract}

\begin{keyword}
Gr\"{o}bner bases, degree bound, cone decompositions, Hilbert functions
\end{keyword}
\end{frontmatter}

\section{Introduction}
Gr\"{o}bner bases play an important role in computational commutative algebra and algebraic geometry. To analyze the complexity of Gr\"{o}bner bases computations, it is essential to give an upper bound on degrees of polynomials in Gr\"{o}bner bases. Such upper bounds were first proved in characteristic zero, independently by \cite{moller} and \cite{giusti}. In the following results, let $S$ be the polynomial ring $\mathbb{K}[x_1,\dots,x_n]$, $I$ be an ideal  generated by polynomials of degree at most $d$. Giusti proved that if $I$ is a homogeneous ideal in generic coordinates, then the degree of polynomials in the reduced Gr\"{o}bner basis of $I$ with respect to the degree reverse lexicographic order can be bounded by $(2d)^{2^{n-1}}$. Applying results from \cite{bayer} and \cite{giusti}, M\"{o}ller and Mora provided the upper bound $(2d)^{(2n+2)^{n+1}}$ in characteristic zero for any term order. Recently Giusti's bound was improved to $2d^{(n-r)2^{r-1}}$ by \cite{HS}, assuming $I$ has a strongly stable initial ideal and $I$ has dimension $r>1$.

An upper bound in any characteristic was proved by \cite{dube}. Dub\'{e} used a purely combinatorial and constructive argument and proved the upper bound of $2\left(d^2/2+d\right)^{2^{n-1}}$ for any term order. He introduced cone decompositions for ideals (also called Stanley decompositions) and provided algorithms to construct an exact cone decomposition, so that the Gr\"{o}bner basis degree can be bounded by a Macaulay constant associated to the cone decomposition. The Macaulay constant is then bounded by direct computations.

\cite{mayr} incorporated the ideal dimension into Dub\'{e}'s constructions and proved the following dimension-dependent bound. They improved Dub\'{e}'s constructions by using a regular sequence in the ideal which has length equal to the ideal height, and then reducing the problem of bounding the Macaulay constants of an arbitrary ideal to bounding those of a simple monomial ideal generated by pure powers. Based on Mayr-Meyer ideals, Mayr and Ritscher also constructed a family of examples that showed any bound must be at least exponential in the codimension and doubly exponential in the dimension (see\cite[\S 4]{mayr}).

\begin{theorem}[\cite{mayr}] \label{1.2}
	Let $\mathbb{K}$ be an infinite field and $I\subsetneq \mathbb{K}[x_1,\dots,x_n]$ be an ideal of dimension $r$ generated by polynomials $F=\{f_1,\dots,f_s\}$ of degrees $d_1\geq \cdots \geq d_s$. Then for any monomial order, the degree of the reduced Gr\"{o}bner basis $G$ of $I$ is bounded by
	\[
	\text{deg}(G)\leq 2\left[\frac{1}{2}\left((d_1\cdots d_{n-r})^{2(n-r)}+d_1\right)\right]^{2^{r}}.
	\]
	
	If $f_1,\dots,f_s$ are homogeneous, then
	\[
	\text{deg}(G)\leq 2\left[\frac{1}{2}(d_1\cdots d_{n-r}+d_1)\right]^{2^{r-1}}.
	\]
	
\end{theorem}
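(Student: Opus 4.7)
The plan is to adapt Dubé's cone decomposition framework, using the dimension $r$ to shorten the doubly exponential recursion. First I would reduce to the homogeneous case by homogenizing $f_1,\dots,f_s$ with a new variable $x_0$: the degree of the reduced Gröbner basis of the dehomogenization is bounded by that of the homogeneous ideal in $\mathbb{K}[x_0,\dots,x_n]$, at the cost of one extra variable and a loss of one level in the dimension, which is precisely where the discrepancy between the two bounds will arise. In the homogeneous case, I would further pass to the initial ideal $\mathrm{in}(I)$, since the degrees of the reduced Gröbner basis of $I$ equal the generator degrees of the minimal generating set of $\mathrm{in}(I)$, which allows working with a monomial ideal of the same dimension $r$.

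Next I would construct an exact cone decomposition of $S/I$ in Dubé's sense: a disjoint decomposition into $\mathbb{K}[Z]$-free cones with a controlled sequence of Macaulay constants $b_0\le b_1\le\cdots\le b_n$ dominating all relevant degrees. Dubé's basic quadratic recursion $b_{i+1}\lesssim 2b_i^2$ already accounts for doubly exponential growth with exponent $2^{n-1}$. The Mayr-Ritscher refinement is to replace the first $n-r$ applications of this recursion by a single explicit computation. Concretely, since $\mathbb{K}$ is infinite and $\dim S/I=r$, the ideal has height $n-r$, and generic $\mathbb{K}$-linear combinations of $f_1,\dots,f_{n-r}$ produce a regular sequence $g_1,\dots,g_{n-r}\in I$ with $\deg g_i\le d_i$. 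The ideal $J=(g_1,\dots,g_{n-r})$ is a complete intersection, so $S/J$ has the explicit Hilbert series $\prod_{i=1}^{n-r}(1-t^{d_i})/(1-t)^{n}$, which bounds its Macaulay constants in closed form by essentially $d_1\cdots d_{n-r}$ (or a modest power thereof in the inhomogeneous case after the homogenization step).

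With this in hand, I would compare the cone decompositions of $S/J$ and $S/I$ via the inclusion $J\subseteq I$, pushing the Hilbert-function bound for the complete intersection down to the first Macaulay constant of an exact decomposition of $S/I$; this is where the factor $(d_1\cdots d_{n-r})^{2(n-r)}+d_1$ (respectively $d_1\cdots d_{n-r}+d_1$) appears. From there, only the remaining $r$ (respectively $r-1$) coordinates contribute to Dubé's quadratic recursion, producing the exponent $2^r$ (respectively $2^{r-1}$) and the stated form of the bound.

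The hard part, I expect, will be the transfer step between the complete intersection and the general ideal: one must show that an exact cone decomposition of $S/I$ can actually be built on top of one for $S/J$ with its initial Macaulay constant inflated only by a polynomial factor in $d_1\cdots d_{n-r}$, so that the subsequent doubly exponential growth truly only involves $r$ (or $r-1$) iterations. A secondary technical point is controlling the dimension loss and the exponent $2(n-r)$ versus $n-r$ when homogenizing, since the extra variable interacts with the regular-sequence construction and must be tracked carefully through the cone-decomposition algorithm.
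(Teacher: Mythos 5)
Your outline follows essentially the same route as Mayr--Ritscher, which is also the one the paper generalizes to modules: build an exact cone decomposition \`a la Dub\'{e} of the normal-form space, use a length-$(n-r)$ regular sequence inside $I$ to reduce the bounding of Macaulay constants to the complete-intersection ideal $J=(x_1^{d_1},\dots,x_{n-r}^{d_{n-r}})$, compute $b_r$ explicitly from the Hilbert series of $S/J$, and then run the quadratic recursion $b_{k-1}\leq \tfrac{1}{2}b_k^2$ for the remaining $r-1$ steps to get the exponent $2^{r-1}$. (A cosmetic slip: the Macaulay constants are decreasing, $b_0\geq b_1\geq\cdots\geq b_{n+1}$, not increasing.)

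The point you flag as ``secondary'' in the non-homogeneous case is in fact the central difficulty and the place where your sketch has a genuine gap. Homogenizing the generators $f_1,\dots,f_s$ does \emph{not} by itself give an ideal of dimension $r+1$ in $n+1$ variables: $(f_1^h,\dots,f_s^h)$ can have strictly larger dimension than $I^h$, so the naive ``replace $n$ by $n+1$'' fails for a dimension-dependent bound, and simply homogenizing the regular sequence $g_1,\dots,g_{n-r}$ does not guarantee $g_1^h,\dots,g_{n-r}^h$ is still a regular sequence. The missing ingredient is the Sombra/Binaei-type lemma (Lemma \ref{8.2} in this paper): one can choose $g_1,\dots,g_{n-r}\in I$ so that the homogenizations $g_1^h,\dots,g_{n-r}^h$ form a regular sequence in $I^h$, but at the price that each $\deg(g_i)$ is only bounded by $d_1\cdots d_{n-r}$ rather than by $d_i$. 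That degree blow-up on the regular sequence is exactly what produces the factor $(d_1\cdots d_{n-r})^{2(n-r)}$ in Mayr--Ritscher (and the sharper $(d_1\cdots d_{n-r})^{n-r}$ in Theorem \ref{8.3} here); your phrase ``a modest power thereof'' hides the entire argument. Without this lemma, the homogenization step is not justified, so the exponent $2^r$ in the non-homogeneous bound has no support. Also, for the homogeneous bound you should take generic combinations of tails $f_{j_k},\dots,f_s$ (as in Lemma \ref{2.2}), not just of $f_1,\dots,f_{n-r}$, to get the degree control $\deg(g_k)=d_{j_k}$ and then reduce to the top $n-r$ degrees.
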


Despite the fact that Gr\"{o}bner basis computations of modules are the most commonly used ones in computer algebra systems like Macaulay2, CoCoA, and Singular (just to name a few), surprisingly uniform bounds of the above kind are only known for ideals. Generalizing \cite{dube} and \cite{mayr}, we prove the following two bounds on the Gr\"{o}bner basis degrees of graded and non-graded submodules of a free module, respectively. In the following theorems, let $F$ be a free module over $\mathbb{K}[x_1,\dots,x_n]$ with basis elements $e_1,\dots,e_m$ so that deg$(e_j)\geq 0$ for all $j$ and $l=\text{max}\{\text{deg}(e_j):j=1,\dots, m\}$. If $G$ is a Gr\"{o}bner basis, let deg$(G)$ denote the maximum degree of the elements in $G$.

\begin{thm} [37]
	Let $M\subsetneq F$ be a graded submodule generated by homogeneous elements with maximum degree $D\geq l$ and dim$ (F/M)=r$. 
	Then the degree of the reduced Gr\"{o}bner basis $G$ of $M$ for any monomial order on $F$ is bounded by
	\[
	\text{deg}(G)\leq 
	\begin{cases}
	Dmn-n+1, & \text{if } r=0\\
	2\left[\frac{1}{2}((Dm)^{n-r}m+D) \right]^{2^{r-1}} & \text{if } r\geq 1\\
	\end{cases}
	\]
\end{thm}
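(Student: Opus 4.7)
The plan is to extend Dubé's cone decomposition framework, together with Mayr-Ritscher's dimension-dependent refinement via a regular sequence, from the setting of ideals in $\mathbb{K}[x_1,\dots,x_n]$ to the setting of submodules of a finitely generated graded free module $F$. The key technical issue throughout will be to carry, at every combinatorial step, an extra multiplicative factor of $m$ reflecting the $m$ basis elements of $F$.

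First I would reduce to the monomial case by replacing $M$ with its initial submodule $\mathrm{in}(M)$: since $M$ is graded, $\mathrm{in}(M)$ has the same Hilbert function and dimension $r$, and the degree of the reduced Gröbner basis of $M$ is at most the maximum degree of a minimal generator of $\mathrm{in}(M)$ together with the initial degrees of the given generators (all $\leq D$). Monomials of $F$ take the form $x^\alpha e_j$, so I would define a \emph{cone} in $F$ to be a set $h\cdot\mathbb{K}[y_1,\dots,y_s]$ with $h=x^\alpha e_j$ and $\{y_1,\dots,y_s\}\subseteq\{x_1,\dots,x_n\}$, and then adapt Dubé's splitting construction to produce an exact cone decomposition $P$ of $F/\mathrm{in}(M)$ whose associated Macaulay constants $a_0(P)\geq\cdots\geq a_n(P)$ bound $\deg(G)$ from above.

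For $r=0$, the quotient $F/\mathrm{in}(M)$ is Artinian, and a direct count of standard monomials $x^\alpha e_j$ up to the socle degree, accounting for the $m$ choices of basis element, yields the value $Dmn-n+1$. For $r\geq 1$, I would invoke the Mayr-Ritscher argument: since $\dim F/M=r$, the annihilator $\mathrm{ann}(F/M)$ has height $n-r$, so after a generic linear change of coordinates there is a regular sequence $\theta_1,\dots,\theta_{n-r}$ of degree $\leq D$ inside it. This sequence lets me reduce the estimation of the Macaulay constants of an arbitrary exact cone decomposition of $F/\mathrm{in}(M)$ to the estimation for a decomposition of a module defined by pure powers of variables on $m$ basis elements, which can be bounded explicitly; iterating the splitting step then produces the $2^{r-1}$ in the exponent.

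The main obstacle I anticipate is the careful bookkeeping that turns the $m$ basis elements into the correct multiplicative factor inside the base of the bound, so that one obtains $(Dm)^{n-r}m$ rather than the ideal-case $D^{n-r}$. Each splitting step in the cone decomposition can introduce up to $m$ new cones per variable, one for each basis element, and the resulting recursions for the Macaulay constants must be analyzed with this amplification in mind. Verifying that the Mayr-Ritscher reduction via a regular sequence still closes in the enriched module setting—and in particular that the regular sequence is located in $\mathrm{ann}(F/M)$ rather than in $M$ itself, which is why the non-graded case later requires a separate and worse bound—will be the most delicate part of the argument.
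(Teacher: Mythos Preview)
Your overall strategy---extend Dub\'e's cone decompositions to modules and then apply the Mayr--Ritscher reduction to a complete intersection---matches the paper. But there is a genuine gap in the step where you produce the regular sequence.

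You assert that since $\operatorname{ann}(F/M)$ has height $n-r$, a generic change of coordinates yields a regular sequence $\theta_1,\dots,\theta_{n-r}$ in $\operatorname{ann}(F/M)$ of degree $\leq D$. This is exactly the step that does \emph{not} carry over from the ideal case. When $M=I$ is an ideal, $\operatorname{ann}(S/I)=I$ and the generators of the annihilator are the generators of $I$, so the degree bound $D$ is automatic. For a submodule $M\subseteq F$, however, $\operatorname{ann}(F/M)=M:_S F$ is a colon ideal, and its minimal generating degree can be much larger than $D$; there is no a priori linear bound in terms of $D$. Without such a bound the Mayr--Ritscher reduction produces nothing useful.

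The paper's fix is to replace $\operatorname{ann}(F/M)$ by the zeroth Fitting ideal $\operatorname{Fitt}_0(F/M)$. Since $\sqrt{\operatorname{Fitt}_0(F/M)}=\sqrt{\operatorname{ann}(F/M)}$, the Fitting ideal also has height $n-r$, so it contains a regular sequence of length $n-r$. The point is that $\operatorname{Fitt}_0(F/M)$ is generated by the $m\times m$ minors of a presentation matrix of $F/M$, and each such minor has degree at most $D_1+\cdots+D_m-\sum_j\deg(e_j)\leq Dm$. Thus the regular sequence has degrees $d_i\leq Dm$, not $\leq D$, and this is precisely the origin of the base $(Dm)^{n-r}$ in the bound. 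Your heuristic that the $m$ arises because ``each splitting step can introduce up to $m$ new cones per variable'' is not where the exponentiated $m$ comes from; the cone-decomposition machinery for $N_{JF}$ contributes only the single multiplicative $m$ in $d_1\cdots d_{n-r}\,m$ (this $m$ counts the $m$ basis elements in the monomial basis of $T_r$, giving $b_r=d_1\cdots d_{n-r}m+D$). The $(Dm)^{n-r}$ is entirely an artifact of the degree bound on the Fitting-ideal generators.

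The same issue affects your $r=0$ argument: the bound $Dmn-n+1$ is $n\cdot(Dm)+l-n+1$ with $d_i\leq Dm$ coming from the Fitting ideal, not from a direct socle-degree count on $F/\operatorname{in}(M)$.
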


\begin{thm} [41]
	Let $M\subsetneq F$ be a submodule generated by elements of maximum degree $D\geq l$ and dim$(F/M)=r$. Then the degree of the reduced Gr\"{o}bner basis $G$ of $M$ for any monomial order is bounded by
	\[
	\text{deg} (G) \leq 2\left[\frac{1}{2}\left(\left( Dm  \right)^{(n-r)^{2}}m+D\right) \right]^{2^{r}}.
	\]
\end{thm}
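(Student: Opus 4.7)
The plan is to reduce to the graded bound of Theorem~37 via homogenization, in parallel with the reduction from the graded to the non-graded ideal bound in \cite{mayr}.

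Concretely, I would introduce a new variable $x_0$, let $\tilde S = \mathbb{K}[x_0, x_1, \ldots, x_n]$, and let $\tilde F = \bigoplus_{j=1}^{m} \tilde S(-\deg e_j)$. Given generators $f_1, \ldots, f_s \in F$ of $M$ with $\deg f_i \leq D$, I form their degree-$D$ homogenizations $\tilde f_1, \ldots, \tilde f_s \in \tilde F$ by multiplying each homogeneous component of $f_i$ by the appropriate power of $x_0$, and set $\tilde M = (\tilde f_1, \ldots, \tilde f_s)\tilde F$. A direct dimension argument gives $\dim(\tilde F/\tilde M) = r + 1$. I then pick a product monomial order $\tilde{>}$ on $\tilde F$ that compares total degree first and refines the given order $>$ on $F$, with $x_0$ as smallest variable; this makes dehomogenization $\tilde g \mapsto \tilde g|_{x_0 = 1}$ carry a reduced Gr\"obner basis of $\tilde M$ for $\tilde{>}$ to a Gr\"obner basis of $M$ for $>$ without raising degrees.

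Applying Theorem~37 to $\tilde M$, which lives in $n+1$ variables with quotient dimension $r+1$, immediately yields the correct outer exponent $2^{(r+1)-1} = 2^r$ and the correct $+D$ summand. What is \emph{not} immediate is the base: a black-box invocation of Theorem~37 delivers only $(Dm)^{n-r}$, whereas the claim requires $(Dm)^{(n-r)^2}$. This gap is the main obstacle.

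To close it I would not use Theorem~37 as a black box; instead, following \cite{mayr}, I would open up the Dub\'e-style cone-decomposition proof of Theorem~37 and apply it to $\tilde M$ directly, keeping track of Macaulay constants. As in the ideal case, the bookkeeping then forces one to iterate a Macaulay-constant bound through the $n - r$ elements of a regular sequence inside $\tilde M$, each step contributing a factor of $Dm$ and producing the exponent $(n-r)\cdot(n-r) = (n-r)^2$. Verifying that the cone-decomposition constructions of \cite{dube} and \cite{mayr} generalize correctly to submodules of $\tilde F$, and that the extra factor of $m$ arising from the $m$ basis vectors of $\tilde F$ is absorbed properly into $(Dm)^{(n-r)^2} m + D$, constitutes the bulk of the technical work.
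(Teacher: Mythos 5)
Your plan has a genuine gap at its very first step, and it is the one the paper explicitly warns about: the claim that $\dim(\tilde F/\tilde M)=r+1$ is false in general when $\tilde M$ is the submodule generated by the \emph{homogenized generators} $\tilde f_1,\dots,\tilde f_s$. Homogenizing a generating set (rather than the whole module) can strictly increase the dimension of the quotient; the paper cites Mayr--Ritscher's Example~3.4 for exactly this phenomenon. If $\dim(\tilde F/\tilde M)>r+1$, then even a black-box application of the graded bound (Theorem~37) gives an outer exponent larger than $2^r$, so your reduction does not yield the stated inequality. Moreover, the second gap you flag is not repaired by ``iterating a Macaulay-constant bound'': in the paper's argument the Macaulay-constant iteration produces the $2^{r}$ in the outer exponent (via Lemma~\ref{7.5} and Corollary~\ref{7.6}), and has nothing to do with the base $(Dm)^{(n-r)^2}$.

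What the paper actually does (Theorem~\ref{8.3} plus Lemma~\ref{2.7}) sidesteps the dimension issue entirely. It invokes Lemma~\ref{8.2} (from Binaei et al., via Sombra) applied to the $0$th Fitting ideal $\mathrm{Fitt}_0(F/M)$, whose generating degrees $d_1\ge\cdots\ge d_k$ are bounded by $Dm$ (Lemma~\ref{2.7}). Lemma~\ref{8.2} produces $g_1,\dots,g_{n-r}\in\mathrm{Fitt}_0(F/M)$ whose homogenizations $g_1^h,\dots,g_{n-r}^h$ form a regular sequence in $S[t]$, at the price of a degree blow-up $\deg g_i\le d_1\cdots d_{n-r}$. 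One then works with the graded module $\tilde M$ generated by $\{g_i^h e_j\}\cup\{f_1^h,\dots,f_s^h\}$ (so the regular-sequence elements are adjoined to the generating set, which you do not do), and Theorem~\ref{6.4} plus Corollary~\ref{7.6} gives a bound controlled by the product $\deg g_1^h\cdots\deg g_{n-r}^h\le (d_1\cdots d_{n-r})^{n-r}\le (Dm)^{(n-r)^2}$. That is where the exponent $(n-r)^2$ comes from; it is purely a consequence of the regular-sequence degree bound, not of any Macaulay-constant iteration. Crucially, Theorem~\ref{6.4} needs only the regular sequence of length $n-r$ and makes no assumption about $\dim(\tilde F/\tilde M)$, which is why the naive dimension claim never needs to hold. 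Without Lemma~\ref{8.2} (or some substitute controlling both the regularity of the homogenized sequence and its degrees), your outline cannot be completed.
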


We shall briefly explain how we generalize results of Dub\'{e} and Mayr-Ritscher. Since the initial module is a direct sum of monomial ideals and thanks to the combinatorial nature of Dub\'{e}'s constructions, the algorithms and results of Dub\'{e}'s cone decompositions can be carried through to the module case with minor adjustments. In Section 3-5, we state the necessary statements of cone decompositions for modules, and provide proofs for those that need extra clarifications. To generalize Mayr and Ritscher's proof, we aim to reduce the problem to bounding the Macaulay constants of a simple monomial module which has the form $IF$ with $I$ generated by pure powers. While in the ideal case Mayr and Ritscher uses a regular sequence of length equal to the ideal height, in the module case we find a regular sequence in the zeroth Fitting ideal of the quotient module $F/M$, which has length equal to height of the zeroth Fitting ideal (equivalently height of the annihilator of $F/M$). The reason why we need the zeroth Fitting ideal instead of the annihilator is because the generating degrees of the zeroth Fitting ideal can be bounded linearly by the generating degrees of the module (see Lemma \ref{2.7}). In Section 6-8, we perform the reductions of Mayr and Ritscher to cone decompositions of modules, compute bounds of the Macaulay constants of monomial modules generated by pure powers, and obtain our graded and non-graded Gr\"{o}bner basis bounds. 

\section{Preliminaries}
\subsection{Notation}

The purpose of this subsection is to set up notations that will be used throughout the paper. For a more detailed introduction to Gr\"{o}bner bases and other related topics, the reader can refer to \cite{eisenbud}, \cite{singular}, \cite{kreuzer1}, and \cite{kreuzer2}.

Let $S=\mathbb{K}[x_1,\dots,x_n]$ denote the ring of polynomials in the variables $X=\{x_1,\dots,x_n\}$. Let $F=Se_1\oplus\dots\oplus Se_m$ denote a graded free $S$-module with $l$ = max$\{\text{deg}(e_j):j=1,\dots,m\}$. Without loss of generality, we may assume that all $e_j$ have nonnegative degrees and the one with smallest degree has degree 0. Let Mon$(S)$ and Mon$(F)$ denote the set of monomials of $S$ and $F$, respectively.

Let $\prec_F$ be a monomial order on $F$, whenever there is no confusion we abbreviate $\prec_F$ as $\prec$. Let $f\in F$, then the \textit{initial monomial} of $f$, denoted by in$_\prec(f)$, is the greatest monomial among the monomials of $f$ with respect to $\prec$. If $M$ is a submodule of $F$, then in$_\prec(M)$ is the monomial submodule generated by $\{\text{in}_\prec(f): f\in M \}$. A subset $G=\{g_1,\dots,g_t\}$ in $M$ is called a \textit{Gr\"{o}bner basis} of $M$ if in$_\prec(g_1),\dots,\text{in}_\prec(g_t)$ generates in$_\prec(M)$. Denote in$_{\prec} (G)=\{\text{in}_{\prec}(g):g\in G\}$.

Let $M$ be a submodule of $F$, fix a monomial order $\prec_F$ on $F$ and a Gr\"{o}bner basis $G$ of $M$. For any $f\in F$, let nf$_G(f)$ denote the unique remainder (or normal form) of $f$ with respect to $G$.
Collecting all such remainders of $f\in F$, we denote
\[
N_M=\{\text{nf}_G(f):f\in F\}.
\]

By Macaulay's theorem \cite[Theorem 15.3]{eisenbud}, we have
\[
N_M=\text{span}\{u\in \text{Mon}(F):u\notin \text{in}_{\prec}(M) \}=N_{\text{in}_{\prec}(M)}.
\]
Therefore $N_M$ only depends on in$_\prec(M)$.

Similarly for any ideal $I$ of $S$, any monomial order $\prec_S$ on $S$, and any Gr\"{o}bner basis $G$ of $I$, we denote
\[
N_I=\{\text{nf}_G(p):p\in S \}.
\]

Let $M$ be a submodule of $F$, we denote $\text{dim}(F/M)=\text{dim}(S/\text{ann}_S(F/M))$ to be the Krull dimension of the $S$-module $F/M$. Recall that ann$_S(F/M)=M:_S F:=\{p\in S : pF\subset M \}$. If $I$ is an ideal of $S$, let ht$(I)$ denote the height of $I$. If $T$ is a vector space over $\mathbb{K}$, let dim$_{\mathbb{K}}(T)$ denote the vector space dimension of $T$.

\subsection{Hilbert function}

Let $T\subseteq F$ be a graded $\mathbb{K}$-vector space with graded components
\[
T_z=\{f\in T: f \text{ is homogeneous of degree } z\}\cup \{0\},
\]
then the \textit{Hilbert function} of $T$ is defined as 
\[
\text{HF}_T(z)=\text{dim}_{\mathbb{K}}(T_z).
\]
The \textit{Hilbert series} of $T$ is defined as 
\[
\text{HS}_T(t)=\sum_{z\geq 0} \text{HF}_T(z)t^z.
\]

Let $M$ be a submodule of $F$, there exists a unique polynomial which is equal to HF$_{N_M}(z)$ for sufficiently large $z$. This polynomial is called the \textit{Hilbert polynomial} of $N_M$ and will be denoted as HP$_{N_M}(z)$. The \textit{Hilbert regularity} of $N_M$ is defined as min$\{z_0 \in \mathbb{Z}:\text{HF}_{N_M}(z)=\text{HP}_{N_M}(z) \; \forall z\geq z_0 \}$.

Notice that $F/M$ and $N_M$ have the same Hilbert function, hence dim$(F/M)$ = deg(HP$_{N_M})+1$ (with the convention that deg$(0)=-1$). 

\subsection{Regular sequence}

Recall that a sequence of elements $g_1,\dots,g_k$ in $S$ is called a \textit{regular sequence} if
\begin{enumerate}
	\item $g_i$ is a non-zerodivisor on $S/(g_1,\dots,g_{i-1})$ for all $i=1,\dots,k$ and
	\item $(g_1,\dots,g_k)\neq S $.
\end{enumerate}

One of the many nice properties that homogeneous regular sequences have is that the submodules they generate have the same Hilbert functions if they have the same degrees. 
\begin{lemma} \label{2.1}
	Let $J=(g_1,\dots,g_k)$ be an ideal generated by a homogeneous regular sequence in $S$ with degrees $d_1,\dots,d_k$. Fix an arbitrary monomial ordering on $F$, consider $JF\subset F$, then $F/JF$ or equivalently $N_{JF}$ has the Hilbert series
	\[
	\text{HS}_{N_{JF}}(t)=\frac{(\sum_{i=1}^{m}t^{\text{deg}(e_i)})(\prod_{i=1}^{k}(1-t^{d_i}))}{(1-t)^{n}}.
	\]
	The Hilbert regularity of $N_{JF}$ is $d_1+\cdots+d_k+l-n+1$.
\end{lemma}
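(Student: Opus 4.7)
The plan is to compute $\text{HS}_{N_{JF}}$ via the isomorphism $F/JF\cong\bigoplus_{i=1}^m (S/J)(-\deg(e_i))$, reducing to the well-known Hilbert series of the complete intersection $S/J$. First I would note that since $N_{JF}$ and $F/JF$ share the same Hilbert function (as recalled in Section 2.1), it suffices to compute $\text{HS}_{F/JF}$. Because $F$ is the direct sum of the shifted rank-one free modules $Se_i\cong S(-\deg(e_i))$, tensoring with $S/J$ over $S$ yields
\[
\text{HS}_{F/JF}(t)=\left(\sum_{i=1}^m t^{\deg(e_i)}\right)\text{HS}_{S/J}(t).
\]

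To compute $\text{HS}_{S/J}$, I would induct on $k$ using the regular-sequence hypothesis. For each $i$, because $g_i$ is a homogeneous nonzerodivisor on $S/(g_1,\dots,g_{i-1})$, there is a short exact sequence of graded modules
\[
0\to (S/(g_1,\dots,g_{i-1}))(-d_i)\xrightarrow{\ g_i\ } S/(g_1,\dots,g_{i-1}) \to S/(g_1,\dots,g_i)\to 0,
\]
and additivity of Hilbert series on short exact sequences gives the recursion $\text{HS}_{S/(g_1,\dots,g_i)}(t)=(1-t^{d_i})\cdot\text{HS}_{S/(g_1,\dots,g_{i-1})}(t)$. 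Iterating from $\text{HS}_S(t)=1/(1-t)^n$ then produces the claimed formula for $\text{HS}_{N_{JF}}$.

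For the regularity I would factor $(1-t)^k$ out of the numerator using the identity $1-t^{d_i}=(1-t)(1+t+\cdots+t^{d_i-1})$. Since a homogeneous regular sequence in the $n$-dimensional ring $S$ has $k\leq n$, this yields
\[
\text{HS}_{N_{JF}}(t)=\frac{P(t)}{(1-t)^{n-k}},\qquad P(t)=\left(\sum_{i=1}^m t^{\deg(e_i)}\right)\prod_{i=1}^k\bigl(1+t+\cdots+t^{d_i-1}\bigr),
\]
where $P(t)$ has nonnegative coefficients, satisfies $P(1)=m\cdot d_1\cdots d_k\neq 0$, and has $\deg P=l+\sum_{i=1}^k(d_i-1)$. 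Invoking the standard index-of-regularity formula, which says that a Hilbert series of the form $P(t)/(1-t)^d$ with $P(1)\neq 0$ and $\deg P=N$ has regularity $N-d+1$, I obtain $l+(d_1+\cdots+d_k-k)-(n-k)+1=d_1+\cdots+d_k+l-n+1$. The main subtlety is ensuring that $(1-t)^{n-k}$ really is the exact denominator of the reduced Hilbert series, which is what $P(1)\neq 0$ guarantees; the degree of $P$ itself is immediate from the nonnegativity of all coefficients involved.
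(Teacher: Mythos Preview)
Your argument is correct and is precisely the standard derivation one finds behind the reference the paper cites: the paper's own proof is simply ``See \cite[Corollary 5.2.17]{kreuzer2},'' and the Koszul short exact sequences together with the reduced Hilbert series/$a$-invariant computation you wrote out are exactly what that corollary packages. There is nothing to add.
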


\begin{proof}
	See \cite[Corollary 5.2.17]{kreuzer2}.
\end{proof}

It is a well-known fact that given a polynomial ideal of dimension $r$ in $n$ variables over an infinite field, we may assume $n-r$ generators of $I$ form a regular sequence. This will be one of the key constructions to achieve a dimension-dependent bound.
\begin{lemma} \label{2.2}
	Let $\mathbb{K}$ be an infinite field and $I\subsetneq S$ an ideal generated by homogeneous polynomials $p_1,\dots,p_s$ with degrees $d_1\geq \cdots \geq d_s$ such that dim$(S/I) \leq r$. Then there are a strictly decreasing sequence $s\geq j_1 > \cdots > j_{n-r}\geq 1$ and homogeneous $a_{ki}\in S$ such that
	\[
	g_k=\sum_{i=j_k}^{s} a_{ki}p_{i} \qquad \text{for } k=1,\dots,n-r
	\]
	form a homogeneous regular sequence, dim$(S/(p_{j_k},\dots,p_s))=n-k$, and deg$(g_k)=d_{j_k}$.
\end{lemma}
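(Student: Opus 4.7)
The plan is to construct the indices $j_k$ and the regular sequence $g_1,\dots,g_{n-r}$ iteratively, using a dimension-drop argument to choose the indices and graded prime avoidance (valid because $\mathbb{K}$ is infinite) to pick each $g_k$.

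First, I would select the indices. Since $\dim(S/I)\leq r$, the chain
\[
(0)\subsetneq\cdots\subseteq (p_i,p_{i+1},\dots,p_s)\subseteq\cdots\subseteq (p_1,\dots,p_s)=I
\]
of ideals has dimension of the quotient starting at $n$ and ending at a value $\leq r$, dropping by at most $1$ whenever a single generator is appended. Hence one can pick $s\geq j_1>j_2>\cdots>j_{n-r}\geq 1$ at the indices where the drop actually occurs, so that $\dim(S/(p_{j_k},\dots,p_s))=n-k$ for each $k$.

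Next, I would build the $g_k$'s by induction on $k$. Assume homogeneous $g_1,\dots,g_{k-1}$ have been constructed, forming a regular sequence with $g_\ell\in(p_{j_\ell},\dots,p_s)$ and $\deg(g_\ell)=d_{j_\ell}$. Consider the graded $\mathbb{K}$-vector space
\[
V_k:=\sum_{i=j_k}^{s} S_{d_{j_k}-d_i}\cdot p_i\ \subseteq\ S_{d_{j_k}},
\]
whose elements are exactly the homogeneous degree-$d_{j_k}$ combinations $\sum_{i=j_k}^{s}a_{ki}p_i$ we want. Since $S$ is Cohen-Macaulay and $g_1,\dots,g_{k-1}$ is a regular sequence, $S/(g_1,\dots,g_{k-1})$ is Cohen-Macaulay of pure dimension $n-k+1$, so its associated primes $P_1,\dots,P_t$ are all minimal and satisfy $\dim(S/P_j)=n-k+1$. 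A non-zerodivisor modulo $(g_1,\dots,g_{k-1})$ is exactly an element avoiding all $P_j$.

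The key step -- and the main technical point -- is to show $V_k\not\subseteq P_j$ for each $j$. Suppose $V_k\subseteq P_j$. Taking the coefficient choice $a_{k,j_k}=1$ and $a_{k,i}=0$ for $i>j_k$, I get $p_{j_k}\in P_j$. For each $i>j_k$, the inclusion $S_{d_{j_k}-d_i}\cdot p_i\subseteq V_k\subseteq P_j$ combined with primality forces either $p_i\in P_j$ or $S_{d_{j_k}-d_i}\subseteq P_j$; the latter would require $d_{j_k}-d_i>0$ and force $P_j\supseteq\mathfrak{m}=(x_1,\dots,x_n)$, hence $P_j=\mathfrak m$, contradicting $\dim(S/P_j)=n-k+1\geq 1$. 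Thus $(p_{j_k},\dots,p_s)\subseteq P_j$, which yields $\dim(S/P_j)\leq\dim(S/(p_{j_k},\dots,p_s))=n-k$, contradicting $\dim(S/P_j)=n-k+1$. Now graded prime avoidance kicks in: each $V_k\cap P_j$ is a proper $\mathbb{K}$-subspace of $V_k$, and over the infinite field $\mathbb{K}$ a finite union of proper subspaces is proper, so I can pick $g_k\in V_k\setminus\bigcup_j P_j$. By construction $g_k=\sum_{i=j_k}^s a_{ki}p_i$ with $a_{ki}$ homogeneous of degree $d_{j_k}-d_i$, $\deg(g_k)=d_{j_k}$, and $g_k$ is a non-zerodivisor modulo $(g_1,\dots,g_{k-1})$. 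Finally, $(g_1,\dots,g_{n-r})\subseteq I\subsetneq S$, so the sequence is a bona fide regular sequence. The heart of the argument, and the only place that really needs care, is the dimension calculation that rules out $V_k\subseteq P_j$; once that is in hand, the rest is a standard inductive assembly.
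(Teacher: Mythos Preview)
Your argument is correct. The paper itself does not prove this lemma; it simply cites \cite[Proposition~4.17]{binaei}, so there is no ``paper's own proof'' to compare against beyond that reference. Your two-step approach---first selecting the indices $j_k$ by tracking where the Krull dimension of $S/(p_i,\dots,p_s)$ drops (using that adding one generator lowers dimension by at most one), then inductively choosing each $g_k$ via the infinite-field vector-space version of prime avoidance---is the standard route and is essentially what the cited proposition does. The only place worth a small remark is the case $d_{j_k}=d_i$ in your containment argument: there $S_0=\mathbb{K}$ and $1\cdot p_i=p_i\in P_j$ directly, so the dichotomy you state collapses harmlessly; you implicitly handle this but it would not hurt to say so explicitly.
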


\begin{proof}
	See \cite[Proposition 4.17]{binaei}
\end{proof}

\begin{remark} \label{2.3}
	For simplicity we will use the weaker version of the above lemma, that is we may assume the regular sequence $g_1,\dots,g_{n-r}$ have (the largest $n-r$) degrees $d_{n-r},\dots,d_1$. So one could potentially get a better bound in Theorem \ref{7.7} by using the degrees $d_{j_k}$ in Lemma \ref{2.2}.
	
	Note that we can reduce to the weaker version since for each $i$, there exists a regular element $y_i\in S_1$ on $S/(g_1,\dots,\hat{g_i},\dots,g_{n-r})$, so we can replace $g_i$ by $g_i y_i^{d_{n-r-i+1}-\text{deg}(g_i)}$.
\end{remark}

\subsection{Fitting ideal}

\begin{definition}
	Let $M$ be an $S$-module and choose a presentation
	\[
	S^s\;{\xrightarrow {\ \varphi\ }}\;S^t \;{\xrightarrow {\ \ }}\;M\;{\xrightarrow {\ \ }}\; 0.
	\]
	of $M$. Let $I_k(\varphi)$ denote the $S$-ideal generated by all $k\times k$ minors of $\varphi$ (set $I_k(\varphi)=S$ for $k\leq 0$ and $I_k(\varphi)=0$ for $k>min\{s,t\}$). Then the \textit{$i$th Fitting ideal} of $M$ is defined as Fitt$_i(M)=I_{t-i}(\varphi)$, which is independent of the choice of the presentation (see \cite[Corollary-Definition 20.4]{eisenbud}).
\end{definition}

If $I$ is an ideal in $S$, let 
\[
\sqrt{I}=\{x\in S: x^k\in I \text{ for some } k\in \mathbb{N} \}
\]
be the radical of $I$, then dim$(S/I)$ = dim$(S/\sqrt{I})$. 

\begin{lemma}
	Let $M$ be an $S$-module, then the $0$th Fitting ideal Fitt$_0(M)$ satisfies Fitt$_0(M)\subseteq$ ann$_S(M)$ and $\sqrt{\text{Fitt}_0(M)}=\sqrt{\text{ann}_S(M)}$.
\end{lemma}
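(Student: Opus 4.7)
My plan is to handle the two assertions in turn, using Cramer's rule for the inclusion and the base-change property of Fitting ideals for the equality of radicals.

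For $\text{Fitt}_0(M)\subseteq \text{ann}_S(M)$, I would fix a presentation $S^s\xrightarrow{\varphi}S^t\to M\to 0$ and write $m_1,\dots,m_t$ for the images in $M$ of the standard basis $e_1,\dots,e_t$ of $S^t$. If $s<t$ the inclusion is trivial, since $\text{Fitt}_0(M)=I_t(\varphi)=0$ by convention. If $s\geq t$, every generator of $I_t(\varphi)$ has the form $\det(\varphi')$ for some $t\times t$ submatrix $\varphi'$ obtained by selecting $t$ columns of $\varphi$. Cramer's rule $\text{adj}(\varphi')\cdot \varphi'=\det(\varphi')\cdot I_t$ then gives $\det(\varphi')\,e_i=\varphi'(\text{adj}(\varphi')\,e_i)\in \text{im}(\varphi)$, so $\det(\varphi')\,m_i=0$ in $M$ for each $i$. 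Since the $m_i$ generate $M$, this shows $\det(\varphi')\in \text{ann}_S(M)$.

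For $\sqrt{\text{Fitt}_0(M)}=\sqrt{\text{ann}_S(M)}$, the first part already gives one inclusion of radicals, so it suffices to verify that every prime $\mathfrak{p}\subset S$ containing $\text{ann}_S(M)$ also contains $\text{Fitt}_0(M)$. Here I would invoke the standard fact that Fitting ideals commute with base change (see \cite[Corollary 20.5]{eisenbud}), which gives $\text{Fitt}_0(M)\cdot k(\mathfrak{p})=\text{Fitt}_0(M\otimes_S k(\mathfrak{p}))$. If $\mathfrak{p}\supseteq \text{ann}_S(M)$, then the finitely generated module $M_{\mathfrak{p}}$ is nonzero, so Nakayama forces $M\otimes k(\mathfrak{p})\neq 0$; but a nonzero finite-dimensional vector space has zeroth Fitting ideal $(0)$, and pulling back gives $\text{Fitt}_0(M)\subseteq \mathfrak{p}$. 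Combining the two inclusions yields $V(\text{Fitt}_0(M))=V(\text{ann}_S(M))$, hence the desired equality of radicals.

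The only genuinely nontrivial ingredient is the base-change formula for Fitting ideals; everything else is Cramer's rule, Nakayama's lemma, and the standard correspondence between primes containing the annihilator of a finitely generated module and the support of that module. No dimension-specific machinery from the rest of the paper is needed.
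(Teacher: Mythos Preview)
The paper itself does not give a proof here; it simply cites \cite[Proposition~20.7]{eisenbud}. Your write-up supplies an actual argument, and the first part (Cramer's rule for $\text{Fitt}_0(M)\subseteq\text{ann}_S(M)$) is correct and standard.

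However, the second part has a genuine directional error. From $\text{Fitt}_0(M)\subseteq\text{ann}_S(M)$ you already have $\sqrt{\text{Fitt}_0(M)}\subseteq\sqrt{\text{ann}_S(M)}$. What remains is the \emph{reverse} inclusion $\sqrt{\text{ann}_S(M)}\subseteq\sqrt{\text{Fitt}_0(M)}$, which is equivalent to showing that every prime $\mathfrak{p}$ containing $\text{Fitt}_0(M)$ also contains $\text{ann}_S(M)$. You instead prove that every prime containing $\text{ann}_S(M)$ also contains $\text{Fitt}_0(M)$, which is precisely the inclusion you already have from the first part; the argument as written is redundant and leaves the needed direction unproved.

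The fix is to run your argument in reverse. Assume $\mathfrak{p}\supseteq\text{Fitt}_0(M)$. Base change gives $\text{Fitt}_0(M\otimes_S k(\mathfrak{p}))=\text{Fitt}_0(M)\cdot k(\mathfrak{p})=0$. Since a finite-dimensional $k(\mathfrak{p})$-vector space has zeroth Fitting ideal equal to $0$ if and only if it is nonzero, we get $M\otimes_S k(\mathfrak{p})\neq 0$, hence $M_{\mathfrak{p}}\neq 0$, hence $\mathfrak{p}\in\text{Supp}(M)=V(\text{ann}_S(M))$, i.e.\ $\mathfrak{p}\supseteq\text{ann}_S(M)$. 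All the ingredients you listed are correct; they just need to be assembled starting from the hypothesis $\mathfrak{p}\supseteq\text{Fitt}_0(M)$ rather than $\mathfrak{p}\supseteq\text{ann}_S(M)$.
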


\begin{proof}
	See \cite[Proposition 20.7]{eisenbud}.
\end{proof}

Applying Remark \ref{2.3} to the $0$th Fitting ideal of $F/M$, we have
\begin{lemma} \label{2.6}
	Let $M$ be a graded submodule of $F$ with dim$(F/M)$ = dim$(S/M:_S F)=r$. Let Fitt$_0(F/M)$ be the $0$th Fitting ideal of $F/M$ generated by polynomials $p_1,\dots,p_k$ of degrees $d_1\geq \cdots \geq d_k$. Then Fitt$_0(F/M)\subseteq M:_S F$ contains a regular sequence $g_1,\dots,g_{n-r}$ of degrees $d_1\geq \cdots \geq d_{n-r}$.
\end{lemma}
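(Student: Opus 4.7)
The plan is to assemble the statement from the two previous results. First, the inclusion $\mathrm{Fitt}_0(F/M)\subseteq M:_S F$ is immediate from the unnumbered Fitting-ideal lemma above, since $\mathrm{ann}_S(F/M)=M:_S F$ by definition. For the regular sequence, the strategy is to invoke Lemma \ref{2.2} applied to the ideal $I=\mathrm{Fitt}_0(F/M)$ of $S$, followed by the strengthening in Remark \ref{2.3} to obtain the specified degrees.

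The key preparatory observation is that $\dim(S/\mathrm{Fitt}_0(F/M))=r$. This follows from the same lemma: since $\sqrt{\mathrm{Fitt}_0(F/M)}=\sqrt{\mathrm{ann}_S(F/M)}=\sqrt{M:_S F}$, the Krull dimensions of the quotients by these radicals agree, and the dimension of $S/J$ equals the dimension of $S/\sqrt{J}$. Since $M$ is a graded submodule of $F$, we may choose a graded presentation of $F/M$, and then $\mathrm{Fitt}_0(F/M)$ is generated by the maximal minors of a homogeneous presentation matrix, so the given generators $p_1,\dots,p_k$ can be assumed homogeneous of degrees $d_1\geq\cdots\geq d_k$. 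Also $M\subsetneq F$ implies $F/M\neq 0$, hence $\mathrm{Fitt}_0(F/M)\subsetneq S$.

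Now I apply Lemma \ref{2.2} to $I=\mathrm{Fitt}_0(F/M)\subsetneq S$, which is generated by the homogeneous polynomials $p_1,\ldots,p_k$ with $\dim(S/I)=r$ (implicitly assuming $\mathbb{K}$ is infinite, as elsewhere in the paper). This produces a homogeneous regular sequence whose members have degrees $d_{j_1},\ldots,d_{j_{n-r}}$ for some strictly decreasing indices. Finally, I use Remark \ref{2.3}: by multiplying each element of this regular sequence by an appropriate power of a generic linear form that is regular modulo the others, I replace the degrees by the largest $n-r$ among $d_1,\dots,d_k$, obtaining a homogeneous regular sequence $g_1,\ldots,g_{n-r}\in\mathrm{Fitt}_0(F/M)$ of degrees $d_1\geq\cdots\geq d_{n-r}$ (after reordering, which preserves regularity for homogeneous sequences in a graded Noetherian ring).

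There is no real obstacle here beyond checking that the hypotheses of Lemma \ref{2.2} transfer: the ideal must be proper, its generators homogeneous, and its quotient dimension $\leq r$. All three are consequences of $M$ being a proper graded submodule together with the radical equality from the Fitting-ideal lemma.
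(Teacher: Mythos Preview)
Your proof is correct and follows essentially the same approach as the paper: use the radical equality $\sqrt{\mathrm{Fitt}_0(F/M)}=\sqrt{M:_S F}$ to deduce $\dim(S/\mathrm{Fitt}_0(F/M))=r$, then apply Remark~\ref{2.3} (the weakened form of Lemma~\ref{2.2}) to $\mathrm{Fitt}_0(F/M)$. You have simply been more explicit than the paper about verifying the hypotheses of Lemma~\ref{2.2} (properness, homogeneity of generators, and the implicit assumption that $\mathbb{K}$ is infinite), which the paper's two-line proof leaves to the reader.
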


\begin{proof}
	Since $\sqrt{M:_S F}=\sqrt{\text{Fitt}_0(F/M)}$, we have $r=$ dim$(S/M:_S F)=$\linebreak dim$(S/$Fitt$_0(F/M))$. Now apply Remark \ref{2.3} to Fitt$_0(F/M)$.
\end{proof}

Notice that we need the Fitting ideal Fitt$_0(F/M)$ to replace the annihilator $M:_S F$ because the generating degrees of Fitt$_0(F/M)$ can be bounded linearly by the generating degrees of $M$, while the generating degrees of $M:_S F$ are usually much larger. We will need both the graded and non-graded versions of the following lemma.
\begin{lemma} \label{2.7}
	Let $M$ be a submodule of $F$ generated by (homogeneous) elements $f_1,\dots,f_s$ with degrees $D=D_1\geq \cdots \geq D_s$ and $s\geq m$. If Fitt$_0(F/M)\neq 0$, then the maximum degree of a minimal (homogeneous) generating set of Fitt$_0(F/M)$ is bounded by $D_1+\cdots +D_m-\sum_{j=1}^{m} \text{deg}(e_j) \leq Dm$.
\end{lemma}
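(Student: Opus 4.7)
The plan is to use the presentation of $F/M$ coming directly from the given generators $f_1,\dots,f_s$ and then perform a one-line degree count on each $m\times m$ minor of that presentation.

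First I would write down the presentation
\[
S^{s}\;\xrightarrow{\ \varphi\ }\;F\;\longrightarrow\;F/M\;\longrightarrow\;0,
\]
where $\varphi$ sends the $i$th standard basis vector of $S^{s}$ to $f_{i}$. Expanding $f_{i}=\sum_{j=1}^{m}a_{ji}e_{j}$ with $a_{ji}\in S$ identifies $\varphi$ with the $m\times s$ matrix $A=(a_{ji})$ over $S$. Because $F$ has rank $m$ and $s\geq m$, the Fitting ideal $\text{Fitt}_{0}(F/M)$ is by definition the ideal $I_{m}(A)$ generated by all $m\times m$ minors of $A$. The key observation is that $\deg(f_{i})\leq D_{i}$ in the grading where $e_{j}$ sits in degree $\deg(e_{j})$ (or in total degree, in the non-graded case) forces each nonzero entry to satisfy $\deg(a_{ji})\leq D_{i}-\deg(e_{j})$.

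Next I would bound an arbitrary $m\times m$ minor $M_{i_{1},\dots,i_{m}}$ with $1\leq i_{1}<\cdots<i_{m}\leq s$. Expanding the determinant as a signed sum over $\sigma\in S_{m}$ of products $\prod_{j=1}^{m}a_{\sigma(j),i_{j}}$, each such product has degree at most
\[
\sum_{j=1}^{m}\bigl(D_{i_{j}}-\deg(e_{\sigma(j)})\bigr)\;=\;\sum_{j=1}^{m}D_{i_{j}}\;-\;\sum_{j=1}^{m}\deg(e_{j}).
\]
Since $D_{1}\geq\cdots\geq D_{s}$, this sum is maximized by the choice $i_{j}=j$, and so every minor has degree at most $D_{1}+\cdots+D_{m}-\sum_{j=1}^{m}\deg(e_{j})$. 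The final inequality $\leq Dm$ is immediate from $D_{i}\leq D$ and $\deg(e_{j})\geq 0$.

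Finally, since the minors generate $\text{Fitt}_{0}(F/M)$ and all lie in degrees bounded by $B:=D_{1}+\cdots+D_{m}-\sum_{j}\deg(e_{j})$, any minimal (homogeneous) generating set does as well. In the graded case this is the standard Nakayama observation: if a homogeneous ideal $I$ is generated in degrees $\leq B$, then $I_{d}\subseteq \mathfrak{m}I$ for every $d>B$, so the graded Betti number $\beta_{0,d}(I)$ vanishes there. In the non-graded case one simply extracts a minimal-by-inclusion subset of the set of minors. I do not expect any substantive obstacle; the only real point is to use the presentation coming from the given $f_{i}$ rather than, say, a minimal free presentation, because this is what keeps the degrees of the matrix entries linear in the $D_{i}$ and turns the determinantal count into an essentially trivial estimate.
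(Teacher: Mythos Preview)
Your proposal is correct and follows essentially the same approach as the paper: both set up the presentation $S^{s}\xrightarrow{\varphi}F\to F/M\to 0$ via the given generators, bound the entries by $\deg(a_{ji})\le D_i-\deg(e_j)$, and then read off the degree of each $m\times m$ minor. Your write-up is slightly more explicit about the determinant expansion and the passage to a minimal generating set, but there is no substantive difference.
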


\begin{proof}
	For each $i=1,\dots,s$, we can write $f_i=a_{1i}e_1+\cdots+a_{mi}e_m$ for some $a_{ji}\in S$, deg$(a_{ji})\leq D_i-\text{deg}(e_j)$ (or deg$(a_{ji})= D_i-\text{deg}(e_j)$ if we are in the homogeneous setting and $a_{ji}\neq 0$).
	Consider the presentation
	\[
	S^s\;{\xrightarrow {\ \varphi\ }}\;S^m \cong F\;{\xrightarrow {\ \ }}\;F/M\;{\xrightarrow {\ \ }}\; 0.
	\]
	of $F/M$ where $\varphi=(a_{ji})_{j=1,\dots,m,i=1,\dots,s}$. Then Fitt$_0(F/M)=I_m(\varphi)$ is the ideal generated by all the $m\times m$ minors of $\varphi$. Every such minor corresponds to $m$ elements $f_{i_1},\dots,f_{i_m}$ among the generators of $M$, so if the minor is nonzero, it will be a (homogeneous) element of degree $\leq D_{i_1}+\cdots +D_{i_m}-\sum_{j=1}^{m} \text{deg}(e_j)$. Hence the largest degree of a minimal (homogeneous) generating set of Fitt$_0(F/M)$ is bounded by $D_1+\cdots +D_m-\sum_{j=1}^{m} \text{deg}(e_j) \leq Dm$.
\end{proof}

\subsection{Homogenization}
Let $f\in F$ be a nonzero element with homogeneous components $f_j$. Let $t$ be a new variable, then the \textit{homogenization} of $f$ is defined as $f^h=\sum_{j} f_j t^{\text{deg}(f)-\text{deg}(f_j)}$.
For a module $M\subseteq F$, the \textit{homogenization} of $M$ is the submodule generated by $\{f^h:f\in M\}$, denoted as $M^h=\left<f^h:f\in M\right>$.

If $f\in F^h=S[t]e_1 \oplus \cdots \oplus S[t]e_m$, let $f^{deh}$ denote the \textit{dehomogenization} of $f$ which is obtained from $f$ by substituting $t$ by $1$. If $G$ is a subset of $F^h$, let $G^{deh}=\{f^{deh}:f\in G \}$.

 Given a monomial order $\prec$ on $F$, we extend it to a monomial order $\prec^\prime$ on $F^h$ as follows:
\[
\begin{split}
\boldsymbol{x^a}t^ce_i \prec^\prime \boldsymbol{x^b}t^de_j \Longleftrightarrow \;
&\text{deg}(\boldsymbol{x^a}e_i)+c<\text{deg}(\boldsymbol{x^b}e_j)+d,\\ 
& \text{ or deg}(\boldsymbol{x^a}e_i)+c=\text{deg}(\boldsymbol{x^b}e_j)+d \text{ and }\boldsymbol{x^a}e_i \prec \boldsymbol{x^b}e_j.
\end{split}
\]
It is easy to check that $\text{in}_{\prec}(f^{deh})=\text{in}_{\prec^\prime}(f)^{deh}$ for all homogeneous $f\in F^h$.

Using the above extension, we can obtain a Gr\"{o}bner basis of a non-graded module $M$ from dehomogenizing a homogeneous Gr\"{o}bner basis of any graded module that dehomogenizes into $M$.
\begin{lemma} \label{2.9}
	Let $M=Sf_1+\cdots+Sf_s$ be a submodule of $F$ and $\prec$ be a monomial order on $F$. If $N$ is a graded submodule of $F^h$ where $S[t]f_1^h+\cdots+S[t]f_{s}^h\subseteq N \subseteq M^h$ and $G$ is a homogeneous Gr\"{o}bner basis of $N$ with respect to $\prec^\prime$, then $G^{deh}$ is a Gr\"{o}bner basis of $M$ with respect to $\prec$.
\end{lemma}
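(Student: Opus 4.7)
The plan is to verify directly the two defining properties of a Gr\"{o}bner basis of $M$: that $G^{deh}\subseteq M$ and that $\text{in}_\prec(G^{deh})$ generates $\text{in}_\prec(M)$. For the first, observe $G\subseteq N\subseteq M^h$, so $G^{deh}\subseteq (M^h)^{deh}$; and $(M^h)^{deh}=M$ because every element of $M^h$ is an $S[t]$-linear combination of $\{f^h:f\in M\}$ and substituting $t=1$ turns it into an $S$-linear combination of elements of $M$, while conversely $f=(f^h)^{deh}$ for every $f\in M$.

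For the initial-module condition I would take any $f\in M$, fix an expression $f=\sum_{i=1}^{s}a_i f_i$ with $a_i\in S$, set $D=\max_i\deg(a_i^h f_i^h)$, and consider
\[
F=\sum_{i=1}^{s} t^{D-\deg(a_i^h f_i^h)}\,a_i^h f_i^h\ \in\ S[t]f_1^h+\cdots+S[t]f_s^h\ \subseteq\ N.
\]
Then $F$ is homogeneous of degree $D$ and $F^{deh}=\sum_i a_i f_i=f$. Since $G$ is a homogeneous Gr\"{o}bner basis of $N$ with respect to $\prec^\prime$, there exist $g\in G$ and a monomial $u\in\text{Mon}(S[t])$ with $\text{in}_{\prec^\prime}(F)=u\cdot\text{in}_{\prec^\prime}(g)$. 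Applying the identity $\text{in}_\prec(h^{deh})=\text{in}_{\prec^\prime}(h)^{deh}$ (recorded just before the lemma) to both $F$ and $g$ gives
\[
\text{in}_\prec(f)=\text{in}_\prec(F^{deh})=\text{in}_{\prec^\prime}(F)^{deh}=u^{deh}\cdot\text{in}_\prec(g^{deh}),
\]
so $\text{in}_\prec(f)$ is divisible by $\text{in}_\prec(g^{deh})$. Since $\text{in}_\prec(M)$ is generated by the monomials $\text{in}_\prec(f)$ as $f$ ranges over $M$, this finishes the argument.

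The only substantive step is the construction of the homogeneous lift $F\in N$ dehomogenizing to $f$. The subtlety is that $f^h$ itself need not belong to $N$, because the hypothesis gives only the sandwich $S[t]f_1^h+\cdots+S[t]f_s^h\subseteq N\subseteq M^h$ rather than an equality; hence one cannot simply homogenize $f$ and use it. Instead one must exploit a concrete presentation $f=\sum a_i f_i$ and balance degrees by powers of $t$ in order to land inside the $S[t]$-submodule generated by the $f_i^h$. Once $F$ is in hand, the rest of the argument is formal and uses only the Gr\"{o}bner-basis property of $G$ inside $N$ together with the already-noted compatibility of $\prec$ and $\prec^\prime$ under dehomogenization.
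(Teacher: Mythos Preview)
Your argument is correct and follows essentially the same route as the paper. The paper compresses the proof by working at the level of modules: it observes $N^{deh}=M$ from the sandwich inclusions and then chains the identities
\[
\text{in}_\prec(M)=\text{in}_\prec(N^{deh})=\text{in}_{\prec'}(N)^{deh}=\langle\text{in}_{\prec'}(G)\rangle^{deh}=\langle\text{in}_\prec(G^{deh})\rangle,
\]
whereas you unfold the second equality elementwise by explicitly constructing, for each $f\in M$, a homogeneous lift $F\in N$ with $F^{deh}=f$. That explicit lift is exactly what is needed to justify the paper's compressed step, so the two arguments coincide in substance.
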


\begin{proof}
	Notice that as $S[t]f_1^h+\cdots+S[t]f_s^h \subseteq N\subseteq M^h$, we have $N^{deh}=Sf_1+\cdots+Sf_s=M$. Since  in$_{\prec}(f^{deh})=$ in$_{\prec^\prime}(f)^{deh}$ for all homogeneous $f\in N$, we have in$_{\prec}(N^{deh})=$ in$_{\prec^\prime}(N)^{deh}$.
	Hence in$_{\prec}(M)=$ in$_{\prec}(N^{deh})=$ in$_{\prec^\prime}(N)^{deh}=\left<\text{in}_{\prec^\prime}(G)^{deh}\right>=\left<\text{in}_{\prec}(G^{deh})\right>$.
\end{proof}

\section{Cone decomposition}
\cite{dube} introduced cone decompositions to express a subspace $T$ in $S$ as a finite direct sum of subspaces of the form $h\mathbb{K}[u]$, where $h$ is a polynomial in $S$ and $u$ is a subset of $X$. In this section, we will give an introduction to cone decompositions in the context of free modules.
\begin{definition}
	Let $h$ be a homogeneous element in $F$ and $u$ a subset of $X$, then $C=C(h,u)=h\mathbb{K}[u]$ is called a \textit{cone}. The degree of the cone is defined as deg$(C)=$ deg$(h)$, the dimension of the cone is defined as dim$(C)=\abs{u}$, and $h$ is called the pivot of the cone.
\end{definition}

\begin{definition}
	Let $T\subseteq F$, $h_1,\dots,h_r$ be homogeneous elements in $F$, and  $u_1,\dots,$\linebreak $u_r$ be subsets of $X$. If as $\mathbb{K}$-vector spaces,
	\[
	T = \bigoplus_{i=1}^{r} h_i\mathbb{K}[u_i],
	\]
	then $P=\{C(h_1,u_1),\dots,C(h_r,u_r)\}$ is called a \textit{cone decomposition} of $T$. The degree of the cone decomposition is defined as deg$(P)=$ max$\{\text{deg}(C):C\in P\}$.
\end{definition}

If $T$ admits a cone decomposition $P$, then the Hilbert function of $T$ is a sum of the Hilbert functions of $h\mathbb{K}[u]$. Counting the number of monomials in $h\mathbb{K}[u]$, we get that if $u=\emptyset$, then
\[
\text{HF}_{h\mathbb{K}[\emptyset]}(z) =
\begin{dcases}
0, & \text{if } z\neq \text{deg}(h) \\
1, & \text{if } z=\text{deg}(h) \\
\end{dcases}
\]
and if $\abs{u}>0$,
\[
\text{HF}_{h\mathbb{K}[u]}(z) =
\begin{dcases}
0, & \text{if } z< \text{deg}(h) \\
\binom{z-\text{deg}(h)+\abs{u}-1}{\abs{u}-1}, & \text{if } z\geq \text{deg}(h) \\
\end{dcases}
\]

Therefore cones of the form $h\mathbb{K}[\emptyset]$ only contribute to finitely many values of the Hilbert function HF$_T$. Collecting the remaining cones, we denote
\[
P^{+}:=\{C\in P : \text{dim}(C)>0\}
\]

\begin{definition}
	A cone decomposition $P$ for $T$ is said to be \textit{q-standard} if the following two conditions hold:
	\begin{enumerate}
		\item There is no cone $C\in P^{+}$ with deg$(C)<q$.
		\item For every $C\in P^{+}$ and degree d such that $q\leq d\leq \text{deg}(C)$, $P$ contains a cone $C^\prime$ with deg$(C^\prime)=d$ and dim$(C^\prime) \geq $ dim$(C)$.
	\end{enumerate}
\end{definition}

Notice that if $P^+=\emptyset$, then $P$ is $q$-standard for all natural numbers $q$. If $P^{+}\neq \emptyset$, then the only possible value for $q$ is min$\{\text{deg}(C) : C\in P^{+}\}$.

We now define a special cone decomposition that splits a cone and is useful for manipulating cone decompositions.
\begin{definition} \label{3.4}
	Let $u=\{x_{j_1},\dots,x_{j_t}\}\subseteq X$, $h$ a homogeneous element in $F$, and $C=h\mathbb{K}[u]$. Then the \textit{fan} of $C$ is defined as
	\[
	\textbf{F}(C):=\{\{C(h,\emptyset)\}\cup\{C(x_{j_i}h,\{x_{j_1},\dots,x_{j_i}\}):i=1,\dots,t \}\}.
	\] 
\end{definition}

\begin{remark}
	The following list contains some facts of $q$-standard cone decompositions that are easy to verify.
	\begin{enumerate}
		\item $\{C(e_j,X)\}$ is a deg$(e_j)$-standard cone decomposition of $Se_j$.
		\item \label{3.5(2)}
		If $C$ is a cone, $\textbf{F}(C)$ is a $(\text{deg}(C)+1)$-standard cone decomposition of $C$.
		\item \label{3.5(3)}
		Let $T=T_1 \oplus T_2$ and let $P_1$ and $P_2$ be $q$-standard cone decompositions of $T_1$ and $T_2$, respectively. Then $P_1 \cup P_2$ is a $q$-standard cone decompositions of $T$ with deg$(P_1\cup P_2)\geq$  deg$(P_1)$.
		\item \label{3.5(4)}
		Let $T$ be a subset of $S$. If $P=\{C(p_1,u_1),\dots,C(p_r,u_r)\}$ is a $q$-standard cone decomposition of $T$, then for any homogeneous element $f\in F$, the set $Q=fP=\{C(fp_1,u_1),\dots,C(fp_r,u_r)\}$ is a $(q+\text{deg}(f))$-standard cone decomposition of $fT=\{fp:p\in T\}$.
	\end{enumerate}
\end{remark}

\begin{lemma} \label{3.6}
	Let $P$ be a $q$-standard cone decomposition of a subspace $T\subseteq F$. Then for any $d \geq q$, there exists a $d$-standard cone decomposition $P_d$ of $T$ with $deg(P_d)\geq$ deg$(P)$ and deg$(P_d^+)\geq$ deg$(P^+)$.
\end{lemma}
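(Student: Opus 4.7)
The plan is to proceed by induction on $d - q$. The base case $d = q$ is immediate by setting $P_d := P$. For the inductive step, suppose a $d$-standard cone decomposition $P_d$ of $T$ has been constructed with $\deg(P_d) \geq \deg(P)$ and $\deg(P_d^+) \geq \deg(P^+)$. I would build $P_{d+1}$ by replacing each cone $C \in P_d^+$ of degree exactly $d$ with its fan $\mathbf{F}(C)$ and leaving every other cone of $P_d$ untouched. By Remark 3.5(2) each $\mathbf{F}(C)$ is a $(d+1)$-standard cone decomposition of $C$, so $P_{d+1}$ remains a cone decomposition of $T$.

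The verification of $(d+1)$-standardness rests on the following observation about fans: if $\deg(C) = d$, then by Definition 3.4 the set $\mathbf{F}(C)$ consists of one dimension-$0$ cone $C(h,\emptyset)$ at degree $d$ together with cones of dimensions $1, 2, \ldots, \dim(C)$ all sitting at degree $d+1$. Hence every cone of $P_{d+1}^+$ either was inherited from $P_d^+$ with degree $>d$ or came from a freshly inserted fan at degree $d+1$, giving condition $(1)$. For condition $(2)$, take $C' \in P_{d+1}^+$ of degree $e_0$ and an intermediate target degree $e \in [d+1, e_0]$. If $C'$ arose from a fan then $e_0 = d+1 = e$ and $C'$ itself is the witness; otherwise $C'$ was inherited from $P_d^+$ with $\deg(C') > d$, and the $d$-standardness of $P_d$ provides a cone $\tilde{C} \in P_d$ at degree $e$ with $\dim(\tilde{C}) \geq \dim(C')$, which was not touched in passing to $P_{d+1}$ (because $e \geq d+1 > d$) and hence serves as the required witness.

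For the degree bookkeeping, any maximum-degree cone of $P_d$ either survives intact in $P_{d+1}$, or, if it has degree exactly $d$, its fan contributes a dimension-$0$ cone at degree $d$ to $P_{d+1}$; either way, $\deg(P_{d+1}) \geq \deg(P_d)$. For the $+$-version, a maximum-degree cone of $P_d^+$ of degree $>d$ survives intact, while if it has degree exactly $d$ (necessarily positive-dimensional since it lies in $P_d^+$), its fan contributes cones of dimension $\geq 1$ at degree $d+1$, so $\deg(P_{d+1}^+) \geq d+1 \geq \deg(P_d^+)$. The main subtlety, and thus the main obstacle, is the verification of condition $(2)$ after modification: one must argue that the witnesses required for $(d+1)$-standardness have not been destroyed when cones at degree $d$ are split by fans. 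This is resolved by the observation that condition $(2)$ only demands witnesses at degrees $\geq d+1$, and every cone of $P_d$ at such a degree survives the transition unchanged, so the $d$-standardness hypothesis directly supplies what we need.
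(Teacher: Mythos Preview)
Your proof is correct and follows essentially the same approach as the paper: both argue by induction on $d-q$ and pass from a $q$-standard decomposition to a $(q+1)$-standard one by replacing each degree-$q$ cone by its fan. The only cosmetic difference is that the paper invokes Remark~3.5(3) (unions of $q$-standard decompositions are $q$-standard) to conclude $(q+1)$-standardness in one line, whereas you verify conditions~(1) and~(2) by hand; the underlying construction is identical.
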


\begin{proof}
	If $P^{+}=\emptyset$ then the result holds trivially. Assume $P^{+}\neq\emptyset$, it suffices to show that there exists a $(q+1)$-standard cone decomposition $P_{q+1}$ of $T$ with deg$(P_{q+1})\geq$ deg$(P)$ and deg$(P_{q+1}^+)\geq$ deg$(P^+)$. Let $Q=\{C \in P : \text{deg}(C)=q\}$. Notice that $Q$ is trivially $q$-standard and $P\setminus Q$ is $(q+1)$-standard as $P$ is $q$-standard, 
	
	By Remark \hyperref[3.5(2)]{3.5(2)}, for each $C \in Q$, there exists a $(q+1)$-standard cone decomposition $\textbf{F}(C)$ of $C$. Now apply Remark \hyperref[3.5(3)]{3.5(3)} to see that $P_{q+1}:=\bigcup_{C \in P} \textbf{F}(C) \cup (P\setminus Q)$ is a $(q+1)$-standard cone decomposition of $T$. Also deg$(P_{q+1})\geq$ deg$(P)$ and deg$(P_{q+1}^+)\geq$ deg$(P^+)$ are clear from the definition of $\textbf{F}(C)$.
\end{proof}

\section{Decomposing a set of normal forms}

For an ideal $I$, \cite{dube} constructed a $0$-standard cone decomposition $P$ of $N_I$, so that the degree of $P$ gives an upper bound to the Gr\"{o}bner basis degree of $I$. We will follow Dub\'{e} to construct an $l$-standard cone decomposition $Q$ of $N_M$ with deg$(Q)$ bounding the Gr\"{o}bner basis degree of $M$. Most of the statements in Dub\'{e} holds in the module case and the proofs can be applied directly with some slight modifications. Recall that $N_M=N_{in_{\prec}(M)}$, hence it suffices to decompose $N_M$ for monomial modules. 

\begin{definition}
	Let $P\cup Q$ be a cone decomposition of $T\subseteq F$, and let $M$ be a submodule of $F$. Then $P$ and $Q$ are said to \textit{split $T$ relative to $M$} if $C\in P$ implies $C\subseteq M$, and $C\in Q$ implies $C\cap M = {0}$.
\end{definition}

\begin{lemma} \label{4.2}
	Let $P=\{C(g_1,u_1),\dots,C(g_r,u_r)\}$ and $Q=\{C(h_1,v_1),\dots,C(h_s,v_s)\}$ split $T$ relative to a monomial module $M$, where for each $C(h_i,v_i)\in Q$, $h_i$ is a monomial in $F$. Then $P$ is a cone decomposition for $T\cap M$ and $Q$ is a cone decomposition for $T\cap N_M$.
\end{lemma}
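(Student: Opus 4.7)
The plan is to exploit the decomposition $F = M \oplus N_M$ as $\mathbb{K}$-vector spaces, which holds because $M$ is a monomial submodule and so $N_M$ is spanned by precisely the monomials of $F$ that do not lie in $M$. From this vantage point the two assertions are essentially formal once I upgrade the hypothesis ``$C(h_j,v_j)\cap M=\{0\}$'' into the stronger statement ``$C(h_j,v_j)\subseteq N_M$.''

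First I would establish that upgrade. Since each $h_j$ is a monomial of $F$, the cone $C(h_j,v_j)=h_j\mathbb{K}[v_j]$ has a $\mathbb{K}$-basis consisting of the monomials $h_j u$ with $u\in\mathrm{Mon}(\mathbb{K}[v_j])$. Because $M$ is a monomial module, each such monomial is either in $M$ or in $N_M$; if any one of them lay in $M$, it would contradict $C(h_j,v_j)\cap M=\{0\}$. Hence every basis monomial of $C(h_j,v_j)$ lies in $N_M$, and consequently $C(h_j,v_j)\subseteq N_M$. On the $P$ side the containment $C(g_i,u_i)\subseteq M$ is immediate from the definition of a splitting.

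Second, taking $\mathbb{K}$-linear sums yields the two easy inclusions
\[
\bigoplus_i C(g_i,u_i)\subseteq T\cap M,\qquad \bigoplus_j C(h_j,v_j)\subseteq T\cap N_M.
\]
For the reverse inclusions, pick $f\in T\cap M$ and write $f=\sum_i a_i+\sum_j b_j$ using the direct-sum decomposition of $T$ given by $P\cup Q$. Then $\sum_i a_i\in M$ (a sum of elements of $M$) and $\sum_j b_j\in N_M$ (a sum of elements of the $C(h_j,v_j)\subseteq N_M$), so $\sum_j b_j=f-\sum_i a_i\in M\cap N_M=\{0\}$. Thus $f\in\bigoplus_i C(g_i,u_i)$. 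The argument for $T\cap N_M$ is symmetric: if $f\in T\cap N_M$, the same decomposition gives $\sum_i a_i=f-\sum_j b_j\in M\cap N_M=\{0\}$, so $f\in\bigoplus_j C(h_j,v_j)$.

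The only delicate point is the monomiality hypothesis on the pivots $h_j$; without it, a cone could satisfy $C\cap M=\{0\}$ while still having components that spoil the direct-sum argument, since $N_M$ is spanned by monomials and is not closed under replacing a polynomial pivot by an arbitrary element of its orbit. Once this observation is in place, the rest of the verification is bookkeeping with the decomposition $F=M\oplus N_M$.
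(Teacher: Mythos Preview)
Your proof is correct. The paper does not give its own argument for this lemma but simply cites \cite[Lemma 4.1]{dube}; your argument is the standard one (exploiting $F=M\oplus N_M$ for a monomial module and the monomiality of the pivots in $Q$) and matches what one finds there.
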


\begin{proof}
	See \citet[Lemma 4.1]{dube}.
\end{proof}

Using Dub\'{e}'s SPLIT algorithm \cite[\S 4]{dube}, we can produce cone decompositions $P$ and $Q$ which split a cone $h\mathbb{K}[u]$ relative to a monomial module $M$. See \citet[Lemma 4.3, 4.4]{dube} for the termination and correctness of the algorithm \hyperref[SPLIT]{SPLIT}.

\begin{algorithm}[h] \label{SPLIT}
	\KwIn{$h\in \text{Mon}(F)$, $u\subseteq X$ a set of variables, $M$ a monomial submodule of $F$, and $B$ a monomial generating set of $M:_S h$}
	\KwOut{cone decompositions $(P,Q)$ which splits $h\mathbb{K}[u]$ relative to $M$}
	\If{$1\in B$}
	{\Return{$(P=\{C(h,u)\},Q=\emptyset)$}}
	\If{$B\cap \text{Mon}(\mathbb{K}[u])=\emptyset$}
	{\Return{$(P=\emptyset,Q=\{C(h,u)\})$}}
	\Other
	{
		Choose $s\subset u$ a maximal subset such that $B\cap \text{Mon}(\mathbb{K}[s])=\emptyset$
		
		Choose $x_i\in u\setminus s$  \tcp*[f]{If $s=u$ this point would not be reached}
		
		$(P_0,Q_0):=$SPLIT$(h,u\setminus \{x_i\},M,B)$
		
		$B^\prime := \{x_i^{-1}f : f\in B, x_i \text{ divides } f \}\cup \{f\in B :  x_i\text{ does not divide } f \}$
		
		$(P_1,Q_1):=$SPLIT$(x_ih,u,M,B^\prime)$
		
		\Return{$(P=P_0\cup P_1, Q=Q_0\cup Q_1)$}
	}
	\caption{{\bf SPLIT$(h,u,M, B)$}}
\end{algorithm}

The cone decomposition $Q$ produced by the SPLIT algorithm has the crucial property that its cone decomposition degree bounds the Gr\"{o}bner basis degree of $M$ (see Theorem \ref{4.7}). To see this, we start with the following lemma.
\begin{lemma} \label{4.5}
	Let $M=\bigoplus^{m}_{j=1} {I_j}e_j$ where $I_j$ are monomial ideals. For each $j$, let $B_j$ be a monomial generating set of ${I_j}$ and $(P_j,Q_j)=$ SPLIT$(e_j,X,I_je_j,B_j)$. If $I_j\neq S$, then ${I_j}e_j$ can be generated by the set $\{f \in B_je_j : \text{deg}(f) \leq 1+\text{deg}(Q_j)\}$. Hence $M$ can be generated by the set $\bigcup^{m}_{j=1} \{f \in B_je_j : \text{deg}(f) \leq \text{max}\{1+\text{deg}(Q_j),\text{deg}(e_j)\} \}$.
\end{lemma}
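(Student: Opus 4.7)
The plan is to reduce to a per-summand bound: for each $j$ with $I_j \neq S$, every minimal monomial generator $g$ of $I_j$ satisfies $\text{deg}(ge_j) \leq 1+\text{deg}(Q_j)$. Since any monomial generating set $B_j$ of the monomial ideal $I_j$ necessarily contains all its minimal monomial generators, this bound immediately implies that the subset $\{f \in B_je_j : \text{deg}(f) \leq 1+\text{deg}(Q_j)\}$ still generates $I_je_j$. Assembling the per-$j$ generating sets via $M=\bigoplus_j I_je_j$ and handling $I_j=S$ separately will yield the ``hence'' statement.

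For the per-$j$ bound I trace the recursion tree of \hyperref[SPLIT]{SPLIT}$(e_j, X, I_je_j, B_j)$. By Lemma \ref{4.2}, $P_j$ is a cone decomposition of $I_je_j$, so $ge_j$ lies in a unique cone $C^* = h^*\mathbb{K}[u^*] \in P_j$. The first key observation is that $h^* = ge_j$: otherwise one would have $ge_j = h^* \cdot x_i q$ for some $x_i \in u^*$ and $q \in \mathbb{K}[u^*]$, giving $(g/x_i)e_j = h^* q \in C^* \subseteq I_je_j$ and hence $g/x_i \in I_j$, contradicting minimality of $g$. Thus $C^*$ is produced by a terminal call \hyperref[SPLIT]{SPLIT}$(ge_j, u^*, I_je_j, B^*)$ with $1 \in B^*$, whose parent is \hyperref[SPLIT]{SPLIT}$(g'e_j, u^*, I_je_j, B')$ with $g' = g/x_{i_k}$ and $x_{i_k}$ the variable multiplied at the final right branch.

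Next I descend from this parent by repeatedly following left branches. Since left branches in SPLIT alter neither the pivot nor the $B$-argument, every call along the descent has the form \hyperref[SPLIT]{SPLIT}$(g'e_j, u, I_je_j, B')$ with $B' = I_j :_S g'$ fixed and $u$ strictly shrinking. Minimality of $g$ forces $g' \notin I_j$, so $1 \notin B'$, which rules out any $P_j$-termination along the descent. Termination must therefore occur via $B' \cap \text{Mon}(\mathbb{K}[u]) = \emptyset$, which happens no later than $u = \emptyset$. The resulting terminal cone has pivot $g'e_j$ and lies in $Q_j$, giving $\text{deg}(Q_j) \geq \text{deg}(g'e_j) = \text{deg}(ge_j) - 1$, as desired.

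For the ``hence'' step: when $I_j = S$, any monomial generating set of $S$ must contain $1$, so $e_j \in B_je_j$ generates $Se_j$ and $\text{deg}(e_j) \leq \max\{1+\text{deg}(Q_j),\text{deg}(e_j)\}$; when $I_j \neq S$, the same left-branch descent argument applied to the root call shows $e_j$ itself is the pivot of a cone in $Q_j$, hence $\text{deg}(Q_j) \geq \text{deg}(e_j)$ and the $\max$ collapses to $1+\text{deg}(Q_j)$. The step I expect to be the main obstacle is the algorithmic bookkeeping: one must carefully verify which invariants of $(h, u, M, B)$ are preserved by SPLIT's left branch, and then use minimality of $g$ (and hence $1 \notin B'$) at every call on the left-branch descent to rule out $P_j$-termination and force a $Q_j$-cone with pivot $g'e_j$ of degree $\text{deg}(ge_j)-1$.
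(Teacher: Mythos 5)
Your proof is correct and follows the same strategy as the paper's sketch (which defers to Dub\'e's Lemma~4.8): locate the minimal generator's cone $C(ge_j,u^*)$ in $P_j$, trace back one right branch to a call with pivot $g'e_j=(g/x_{i_k})e_j$, and then descend along left branches, using minimality of $g$ to force $1\notin B'$ and hence a $Q_j$-termination at degree $\deg(ge_j)-1$; your handling of the $I_j=S$ and $I_j\neq S$ cases in the ``hence'' step is also fine. One small point worth making explicit: the reason the parent of the terminal $P_j$-call is necessarily a right branch is that a left branch leaves the $B$-argument unchanged, so $1\in B^*$ would already have terminated the parent --- you rely on this but state it only implicitly.
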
 

\begin{proof}
	We sketch the proof given in \citet[Lemma 4.8]{dube}. It suffices to show that for every minimal monomial generator $f$ of $I_je_j$, $Q_j$ contains a cone $C(h,u)$ with deg$(h)=\text{deg}(f)-1$. Since $P_j$ is a cone decomposition of $I_je_j$ and $f$ is a minimal generator, $P_j$ must contain a cone of the form $C(f,v)$. Then a cone of the form $C(h,u)\in Q_j$ with deg$(h)=\text{deg}(f)-1$ could be found by tracing back and forth the recursions.
\end{proof}

\begin{lemma} \label{4.6}
	Let $(P,Q)=	\text{SPLIT}(h,u,M,B)$, then $Q$ is a deg$(h)$-standard cone decomposition.
\end{lemma}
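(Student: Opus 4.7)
The plan is to proceed by induction on a termination measure for \texttt{SPLIT}, for instance the lexicographic pair $(|u|,|B|)$. Both base cases are immediate: if $1\in B$ then $Q=\emptyset$ is vacuously $\deg(h)$-standard, and if $B\cap\mathrm{Mon}(\mathbb{K}[u])=\emptyset$ then $Q=\{C(h,u)\}$ is a single cone whose pivot has degree $\deg(h)$, so both defining conditions of $\deg(h)$-standardness hold trivially.

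For the inductive step, write $Q=Q_0\cup Q_1$ with $Q_0=\text{SPLIT}(h,u\setminus\{x_i\},M,B)$ and $Q_1=\text{SPLIT}(x_i h,u,M,B')$; by IH, $Q_0$ is $\deg(h)$-standard and $Q_1$ is $(\deg(h)+1)$-standard. Condition (1) of the definition is then immediate. For condition (2), given $C\in Q^+$ and $d\in[\deg(h),\deg(C)]$, the desired witness cone is found inside $Q_0$ when $C\in Q_0^+$ (using $\deg(h)$-standardness of $Q_0$) and inside $Q_1$ when $C\in Q_1^+$ with $d\geq\deg(h)+1$ (using $(\deg(h)+1)$-standardness of $Q_1$). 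The only remaining situation is $C\in Q_1^+$ with $d=\deg(h)$, where the witness must come from $Q_0$ since every cone of $Q_1$ has degree at least $\deg(h)+1$.

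To handle this last case I would strengthen the inductive hypothesis to also assert that, whenever $Q^+\neq\emptyset$, $Q$ contains a \emph{reference cone} $C(h,s^*)$ of degree $\deg(h)$, where $s^*\subseteq u$ is a subset of maximum cardinality satisfying $B\cap\mathrm{Mon}(\mathbb{K}[s^*])=\emptyset$, and moreover $|s^*|\geq\dim(C)$ for every $C\in Q^+$. Both base cases preserve this strengthening trivially. In the recursion, the outer choice $s$ (with $x_i\notin s$) remains of maximum cardinality inside $u\setminus\{x_i\}$, so the strengthened IH applied to $Q_0$ produces a reference cone $C_0^*\in Q_0$ of degree $\deg(h)$ and dimension $|s|$. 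On the other hand, using the identity $B'\cap\mathrm{Mon}(\mathbb{K}[t])=\emptyset\iff B\cap\mathrm{Mon}(\mathbb{K}[t\cup\{x_i\}])=\emptyset$, any subset $t\subseteq u$ witnessing a cone in $Q_1^+$ satisfies $|t|\leq|s|$, so $C_0^*\in Q$ simultaneously supplies the missing degree-$\deg(h)$ witness and dominates the dimensions of cones in $Q_1^+$. The main obstacle is setting up and propagating this strengthened statement across the recursive combination of $Q_0$ and $Q_1$; once it is in place, the delicate case $C\in Q_1^+,\ d=\deg(h)$ falls out.
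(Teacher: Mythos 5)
Your strategy---induct on the \texttt{SPLIT} recursion and strengthen the induction hypothesis so that $Q$ carries a ``reference'' cone $C(h,s)$ of degree $\deg(h)$ whose dimension dominates every positive-dimensional cone of $Q$---is the right idea, and it is essentially the content of Dub\'e's Lemma~4.10, which is all the paper's ``proof'' consists of (a citation). However, two of the supporting steps are not correct as stated.

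The claimed identity $B'\cap\mathrm{Mon}(\mathbb{K}[t])=\emptyset\iff B\cap\mathrm{Mon}(\mathbb{K}[t\cup\{x_i\}])=\emptyset$ is false in the direction you actually use. Take $t=\emptyset$ and $B=\{x_i^2\}$: then $B'=\{x_i\}$ and $B'\cap\mathrm{Mon}(\mathbb{K}[t])=\emptyset$, yet $x_i^2\in B\cap\mathrm{Mon}(\mathbb{K}[\{x_i\}])$. Only the ``$\Leftarrow$'' direction holds in general, but to dominate $Q_1^+$ you invoke ``$\Rightarrow$''. The repair is to show directly that $B'\cap\mathrm{Mon}(\mathbb{K}[s_1])=\emptyset$ implies $B\cap\mathrm{Mon}(\mathbb{K}[s_1])=\emptyset$ for the maximal $s_1\subseteq u$ supplied by the inductive hypothesis: if $x_i\notin s_1$, any $f\in B\cap\mathrm{Mon}(\mathbb{K}[s_1])$ is not divisible by $x_i$ and hence lies in $B'$, a contradiction; if $x_i\in s_1$, then a monomial $f$ with $x_i\mid f$ lies in $\mathrm{Mon}(\mathbb{K}[s_1])$ iff $x_i^{-1}f$ does, so the two conditions are equivalent. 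Maximality of $s$ in $u$ then gives $|s_1|\le|s|$, which is all you need.

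Two smaller points. The lexicographic pair $(|u|,|B|)$ is not a valid induction measure: the second recursive call keeps $u$ fixed and can keep $|B'|=|B|$ (e.g.\ $B=\{x_1x_2\}$, $x_i=x_1$, $B'=\{x_2\}$). Use something like $(|u|,\sum_{f\in B}\deg f)$ instead; this strictly decreases in the second call because maximality of $s$ forces some $f\in B\cap\mathrm{Mon}(\mathbb{K}[s\cup\{x_i\}])$ to be divisible by $x_i$. Also, stating the reference-cone invariant conditionally on $Q^+\neq\emptyset$ makes the combination step awkward when $Q_0^+=\emptyset$; it is cleaner to state it unconditionally (if $1\notin B$, then $Q$ always contains $C(h,s)$, because the left-most chain of first-recursion calls strips $u$ down to $s$ without changing $h$ or $B$) so that the domination of $Q_1^+$ does not have to be derived circularly. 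With these fixes, the argument is sound and agrees with Dub\'e's.
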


\begin{proof}
	See \citet[4.10]{dube}.
\end{proof}

Combining Corollary \ref{4.5} and Lemma \ref{4.6}, we obtain our main theorem of this section. 
\begin{theorem} \label{4.7}
	Let $G$ be a homogeneous Gr\"{o}bner basis of a graded submodule $M \subseteq F$ with respect to a monomial order $\prec_F$. Then $N_M$ admits a $l$-standard cone decomposition $Q$ where $l = \text{max}\{\text{deg}(e_1),\dots, \text{deg}(e_m)\}$ and $G^\prime = \{g \in G : \text{deg}(g)\leq \text{max}\{1+\text{deg}(Q),l\}\}$ is also a Gr\"{o}bner basis of M with respect to $\prec_F$. In particular, the degree of the reduced Gr\"{o}bner basis of $M$ is bounded by $\text{max}\{1+\text{deg}(Q),l\}$.
\end{theorem}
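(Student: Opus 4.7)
The plan is to assemble the theorem from the preceding lemmas by decomposing $M$ and $N_M$ according to the free basis of $F$, invoking the SPLIT algorithm on each summand, gluing the outputs, and finally translating a degree bound on a monomial generating set of $\text{in}_{\prec}(M)$ into a bound on a Gröbner basis.

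First, since $N_M = N_{\text{in}_{\prec}(M)}$, I may replace $M$ by its initial module and assume $M$ is monomial. Write $M = \bigoplus_{j=1}^m I_j e_j$ for monomial ideals $I_j \subseteq S$, and for each $j$ let $B_j$ be a minimal monomial generating set of $I_j$. Then apply the SPLIT algorithm to obtain $(P_j, Q_j) = \text{SPLIT}(e_j, X, I_j e_j, B_j)$. By Lemma \ref{4.2}, the pair $(P_j, Q_j)$ splits the cone $Se_j = e_j \mathbb{K}[X]$ relative to $I_j e_j$, so $P_j$ is a cone decomposition of $I_j e_j$ and $Q_j$ is a cone decomposition of $Se_j \cap N_M$. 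Since $N_M = \bigoplus_{j=1}^m (Se_j \cap N_M)$, the union $\bigcup_j Q_j$ is a cone decomposition of $N_M$.

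Next I address the standardness of this decomposition. By Lemma \ref{4.6}, each $Q_j$ is $\deg(e_j)$-standard, and since $\deg(e_j) \leq l$, Lemma \ref{3.6} promotes each $Q_j$ to an $l$-standard cone decomposition of $Se_j \cap N_M$ without decreasing the relevant degree statistics. Then Remark \hyperref[3.5(3)]{3.5(3)} assembles these into an $l$-standard cone decomposition $Q$ of $N_M$ with $\deg(Q) \geq \max_j \deg(Q_j)$.

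Now I turn to the Gröbner basis bound. By Lemma \ref{4.5}, $M = \text{in}_{\prec}(M)$ can be generated by the set
\[
\mathcal{B} := \bigcup_{j=1}^m \{f \in B_j e_j : \deg(f) \leq \max\{1 + \deg(Q_j), \deg(e_j)\}\},
\]
so every monomial in $\mathcal{B}$ has degree at most $\max\{1+\deg(Q), l\}$. Since $\{\text{in}_{\prec}(g) : g \in G\}$ generates $\text{in}_{\prec}(M) = M$ and $\mathcal{B}$ already generates this monomial module, every $b \in \mathcal{B}$ is divisible by some $\text{in}_{\prec}(g)$ with $\deg(g) = \deg(\text{in}_{\prec}(g)) \leq \deg(b)$; the collection of such $g$'s is exactly $G' = \{g \in G : \deg(g) \leq \max\{1+\deg(Q),l\}\}$, and its initial monomials generate $\text{in}_{\prec}(M)$, so $G'$ is a Gröbner basis. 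Applied to the reduced Gröbner basis, this shows every element has degree at most $\max\{1+\deg(Q), l\}$.

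The only subtlety in the argument is the passage in the last paragraph from a bounded-degree monomial generating set of $\text{in}_{\prec}(M)$ to a bounded-degree subset of $G$ that is itself a Gröbner basis; the rest is bookkeeping that combines Lemmas \ref{4.2}, \ref{4.5}, \ref{4.6}, \ref{3.6} and Remark \hyperref[3.5(3)]{3.5(3)}.
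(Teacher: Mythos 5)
Your proof is correct and follows essentially the same route as the paper: decompose $\text{in}_\prec(M)=\bigoplus_j I_j e_j$, apply SPLIT componentwise, promote to $l$-standard via Lemma \ref{3.6}, glue by Remark \hyperref[3.5(3)]{3.5(3)}, and invoke Lemma \ref{4.5}. The one small divergence is your choice of $B_j$ as a \emph{minimal} monomial generating set of $I_j$, which forces the short divisibility argument at the end to pass from a bounded-degree generating set of $\text{in}_\prec(M)$ back to a subset of $G$; the paper instead feeds $B_je_j=\text{in}_\prec(G)\cap Se_j$ into SPLIT, so Lemma \ref{4.5} directly outputs $\{\text{in}_\prec(g):g\in G'\}$ and that last step is unnecessary.
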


\begin{proof}
	Since in$_\prec (M)$ is a monomial submodule, in$_\prec (M) = \bigoplus^{m}_{j=1} {I_j}e_j$ for some monomial ideals $I_j \subseteq S$. For each $j=1, \dots, m$, let $B_je_j =$ in$_\prec(G)\cap Se_j$, then $B_je_j$ generates ${I_j}e_j $. Consider $(P_j,Q_j) =$ SPLIT$(e_j, X, I_je_j, B_j)$. $Q_j$ is a deg($e_j$)-standard cone decomposition of $N_{{I_j}e_j} \cap Se_j$ by Lemma \ref{4.2} and Lemma \ref{4.6}. Then by Lemma \ref{3.6}, there exists a $l$-standard cone decomposition $Q_{j}^\prime$ for $N_{{I_j}e_j} \cap Se_j$ with deg$(Q_j^{\prime})\geq \text{deg}(Q_j)$.
	
	Since $N_M=N_{\text{in}_{\prec}(M)}= \bigoplus^{m}_{j=1} N_{{I_j}e_j} \cap Se_j$, we have that $Q:=\bigcup^{m}_{j=1} Q_{j}^\prime $ is a $l$-standard cone decomposition for $N_M$ and clearly deg$(Q) \geq \text{deg}(Q_{j}^\prime)$. By Corollary \ref{4.5}, in$_{\prec}(M)$ can be generated by 	
	\[
	\begin{split}
	&\;\;\;\; \bigcup^{m}_{j=1} \{f \in B_je_j : \text{deg}(f) \leq \text{max}\{1+\text{deg}(Q_j),\text{deg}(e_j)\} \} \\
	&\subseteq\bigcup^{m}_{j=1} \{f \in B_je_j : \text{deg}(f) \leq \text{max}\{1+\text{deg}(Q_j^{\prime}),\text{deg}(e_j)\} \}  \\
	&\subseteq\{f \in \bigcup^{m}_{j=1} B_je_j : \text{deg}(f) \leq \text{max}\{1+\text{deg}(Q),l\} \}  \\
	&= \{\text{in}_{\prec}(g) : g \in G^\prime\}.\\
	\end{split}
	\]
	
	Hence $G^\prime$ is a Gr\"{o}bner basis of $M$.	
\end{proof}

\section{The exact cone decomposition and Macaulay constants}

One of the nice properties that a $q$-standard cone decomposition $P$ has is that there are cones of every degree between $q$ and deg$(P)$. However, this doesn't give us any control over the number of cones in a certain degree. Therefore the following notion is introduced to further refine a $q$-standard cone decomposition.
\begin{definition}
	Let $T$ be a subspace of $F$, then $P$ is called a $q$-\textit{exact} cone decomposition of $T$ if $P$ is a $q$-standard cone decomposition of $T$, and deg$(C)\neq \text{deg}(C^{\prime})$ for all $C\neq C^{\prime}\in P^+$
\end{definition}

If $P^{+}\neq \emptyset$, then there is a unique value $q>0$ such that $P$ is $q$-standard. If $P^{+}=\emptyset$, $P$ is trivially $q$-exact for all natural numbers $q$, in this case we set $P$ to be $0$-exact for the following definition.

\begin{definition}
	Let $P$ be a $q$-exact cone decomposition of $T\subseteq F$.
	Then the \textit{Macaulay constants} of $P$ are defined as
	\[
	b_k:=\text{max}\left(\{q\}\cup \{1+\text{deg}(C):C\in P, \text{dim}(C)\geq k\}\right) \quad \text{for } k=0,\dots,n+1.
	\]
\end{definition}

It is a simple consequence of this definition that the $b_k$'s satisfy $b_0\geq b_1\geq \dots \geq b_{n+1}=q$, and if $P^+\neq \emptyset$ then $b_0=1+\text{deg}(P)$ and $b_1=1+\text{deg}(P^+)$.

Once we have an exact cone decomposition $P$, the Macaulay constants give a nearly complete picture of $P$, meaning that they control degrees of all the cones in $P$ but not the specific pivots.
\begin{lemma} \label{5.3}
	Let $P$ be a $q$-exact cone decomposition, and let $b_0,\dots,b_{n+1}$ be defined as above. Then for each $i=1,\dots,n$ and degree $d$ such that $b_{i+1}\leq d < b_i$, there is exactly one cone $C\in P^{+}$ such that deg$(C)=d$, and for that cone dim$(C)=i$. In particular $b_i=b_{i+1}+\abs{\{C\in P^+ : \text{dim}(C)=i\}}$ for $i=1,\dots,n$.
\end{lemma}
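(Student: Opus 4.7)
The plan is to prove both statements simultaneously by setting up a bijection between the integer degrees $d$ satisfying $b_{i+1}\le d<b_i$ and the cones in $P^{+}$ of dimension exactly $i$. The equality $b_i=b_{i+1}+|\{C\in P^{+}:\dim(C)=i\}|$ then falls out of counting the integers in $[b_{i+1},b_i-1]$.

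For the forward direction, fix $d$ with $b_{i+1}\le d<b_i$. Since $d\ge b_{i+1}\ge q$ and $d<b_i$, the maximum in the definition of $b_i$ must strictly exceed $q$, so it is realized by some $C^{*}\in P^{+}$ with $\dim(C^{*})\ge i$ and $\deg(C^{*})=b_i-1\ge d$. Applying the second clause of $q$-standardness to $C^{*}$ at degree $d$ produces a cone $C'\in P^{+}$ with $\deg(C')=d$ and $\dim(C')\ge \dim(C^{*})\ge i$. The $q$-exactness hypothesis forces $C'$ to be the unique cone of $P^{+}$ of degree $d$. To upgrade $\dim(C')\ge i$ to equality I observe that $\dim(C')\ge i+1$ would make $C'$ contribute to the set defining $b_{i+1}$, yielding $b_{i+1}\ge 1+d$ and contradicting $d\ge b_{i+1}$.

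For the reverse direction, I need to show that every $C\in P^{+}$ with $\dim(C)=i$ has $\deg(C)\in [b_{i+1},\,b_i-1]$. The upper bound is immediate from the definition of $b_i$. The lower bound $\deg(C)\ge b_{i+1}$ is the only subtle point, and is where I expect the main difficulty to lie: assume toward contradiction that $\deg(C)<b_{i+1}$. If $b_{i+1}=q$, this already contradicts $\deg(C)\ge q$, which holds because $C\in P^{+}$ and $P$ is $q$-standard. If instead $b_{i+1}>q$, then I take a witness $C^{**}\in P^{+}$ with $\dim(C^{**})\ge i+1$ and $\deg(C^{**})=b_{i+1}-1\ge \deg(C)$, apply $q$-standardness to $C^{**}$ at degree $\deg(C)\ge q$ to extract a cone of that degree and dimension $\ge i+1$, and then invoke $q$-exactness to identify it with $C$ --- contradicting $\dim(C)=i$. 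This closes the bijection and yields both assertions of the lemma.
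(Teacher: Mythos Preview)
Your argument is correct. The paper does not supply its own proof of this lemma but simply cites \cite[Lemma 6.1]{dube}; your direct combinatorial argument from the definitions of $q$-standard and $q$-exact is exactly the standard one and is essentially what one finds in Dub\'{e}'s paper.
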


\begin{proof}
	See \citet[Lemma 6.1]{dube}.
\end{proof}

One way to make a $q$-standard cone decomposition $q$-exact is as follows: whenever there are two cones of the same degree, replace the cone of lower-dimension by its fan and the resulting cone decomposition still remains $q$-standard. The \hyperref[EXACT]{EXACT} algorithm is from \cite{mayr} and is a reformulation of SHIFT and EXACT in \cite{dube}.
\begin{algorithm}[h] \label{EXACT}
	\KwIn{$Q$ a $q$-standard cone decomposition of $T\subseteq F$}
	\KwOut{$P$ a $q$-exact cone decomposition of $T\subseteq F$}
	$P:=Q$
	
	\For{$d:=q,\dots,\text{deg}(P^+)$}
	{
		$S:=\{C\in P^+ : \text{deg}(C)=d\}$
		
		\While{$\abs{S}>1$}
		{
			Choose $C\in S$ with minimal dimension $\text{dim}(C)$
			
			$S:=S\setminus \{C\}$
			
			$P := P\setminus \{C\} \cup \textbf{F}(C)$
		}
	}
	\Return $(P)$
	\caption{{\bf EXACT$(Q)$}}
\end{algorithm}

The EXACT algorithm results in the following lemma.
\begin{lemma} \label{5.4}
	Every $q$-standard cone decomposition $Q$ of a vector space $T\subseteq F$ may be refined into a $q$-exact cone decomposition $P$ of $T$ with deg$Q)\leq \text{deg}(P)$ and deg$(Q^+)\leq \text{deg}(P^+)$.
\end{lemma}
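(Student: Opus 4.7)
The plan is to verify that the EXACT algorithm produces the desired refinement, by tracking loop invariants through the outer for-loop. At the start of iteration $d$, I would maintain: (a) $P$ is a $q$-standard cone decomposition of $T$; (b) for every $d' < d$, the set $P^+$ contains at most one cone of degree $d'$; and (c) $\deg(P) \geq \deg(Q)$ and $\deg(P^+) \geq \deg(Q^+)$. Initialization with $P := Q$ is immediate.

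For the maintenance step, one execution of the while-loop body replaces a cone $C \in P^+$ of degree $d$ and minimum dimension within $S$ by its fan $\textbf{F}(C)$. By Definition 3.4, $\textbf{F}(C)$ contributes exactly one zero-dimensional cone of degree $d$ (moving to $P \setminus P^+$) and positive-dimensional cones of degree $d+1$ with dimensions $1, 2, \ldots, \dim(C)$, which immediately preserves (b) and (c). For condition (1) of $q$-standardness in (a), the new positive-dimensional cones have degree $d+1 > q$. For condition (2), witnesses for cones of degree less than $d$ are untouched; the other cones at degree $d$ in $S$ serve both as witnesses for themselves (having dimension $\geq \dim(C)$ by the minimality of the pick) and for the new fan cones (whose dimensions are at most $\dim(C)$); and witnesses at degrees below $d$ for the new fan cones carry over from the old witnesses for $C$.

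Termination of the inner while-loop is immediate since $|S|$ strictly decreases at each iteration. The main obstacle is bounding $\deg(P^+)$ across the outer loop. I would argue via Hilbert function invariance: at every stage, $P$ is a cone decomposition of $T$, so $\text{HS}_T$ equals the sum of the cone Hilbert series. Letting $i_{\max}$ denote the maximum dimension in $P^+$, comparing leading coefficients of the Hilbert polynomial forces the count $N$ of dimension-$i_{\max}$ cones in $P^+$ to be invariant under fanning. By $q$-standardness these cones must populate every degree from $q$ up to their current maximum, so after finitely many fannings they settle into degrees $q, q+1, \ldots, q + N - 1$. A downward induction on $i_{\max}$ then handles the cones of smaller dimension and terminates the algorithm.

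At termination, invariant (b) extended to all degrees $\leq \deg(P^+)$ is precisely $q$-exactness of $P$, while (c) gives the claimed degree inequalities.
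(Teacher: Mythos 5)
Your proof is correct and follows essentially the same route as the paper, which defers to the verification of the EXACT algorithm in Mayr--Ritscher (Lemma 16): track that each pass of the inner while-loop replaces the minimum-dimension degree-$d$ cone by its fan, preserving $q$-standardness and never decreasing $\deg(P)$ or $\deg(P^+)$, and argue termination of the outer loop by bounding how far cones can migrate upward in degree. The one place worth making fully explicit is the downward induction in the termination step: the count of dimension-$k$ cones in $P^+$ is only guaranteed invariant once all cones of dimension greater than $k$ have been settled (before that, fanning a higher-dimensional cone creates new dimension-$k$ cones), so the induction must proceed dimension by dimension, fixing the top dimension first, then freezing the count at the next dimension from that stage onward, and so on.
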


\begin{proof}
	See \citet[Lemma 16]{mayr}.
\end{proof}

As we have seen before, if $T$ admits a cone decomposition $P$, then its Hilbert polynomial is determined by $P$. If we further assume $P$ is exact, then by Lemma \ref{5.3} $P$ is determined by the Macaulay constants of $P$. Hence it follows that the Hilbert polynomial HP$_T$ is determined by the Macaulay constants.
\begin{lemma} \label{5.5}
	Let $P$ be a $q$-exact cone decomposition of a subspace $T\subseteq F$, let $b_0,\dots,b_{n+1}$ be the Macaulay constants of $P$. Then for $z\geq b_0$, the Hilbert function HP$_T(z)$ attains the polynomial form
	\begin{equation*}
	\text{HP}_T(z)=\binom{z-b_{n+1}+n}{n}-1-\sum_{i=1}^{n} \binom{z-b_i+i-1}{i}.
	\end{equation*}
	In addition for $z \geq b_1$, 
	\begin{equation*}
	\text{HF}_T(z)=\text{HP}_T(z)+\abs{\{C(h,\emptyset)\in P : \text{deg}(h)=z\}}
	\end{equation*}
\end{lemma}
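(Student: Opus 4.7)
The plan is to compute $\text{HF}_T(z)$ by summing the Hilbert functions of the individual cones, which is justified because $P$ expresses $T$ as an internal direct sum of $\mathbb{K}$-vector spaces. I would first separate $P^+$ from the zero-dimensional cones, using the fact that $C(h,\emptyset)$ contributes $1$ to $\text{HF}_T$ only in degree $\deg(h)$, while $C(h,u)$ with $|u|=i>0$ contributes $\binom{z-\deg(h)+i-1}{i-1}$ whenever $z \geq \deg(h)$. For $z \geq b_1 = 1 + \deg(P^+)$, every cone in $P^+$ satisfies $\deg(C) \leq z$, so
\[
\text{HF}_T(z) = \bigl|\{C(h,\emptyset)\in P : \deg(h) = z\}\bigr| + \sum_{C\in P^+} \binom{z - \deg(C) + \dim(C) - 1}{\dim(C) - 1}.
\]

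The next step is to use Lemma \ref{5.3} to parametrize the cones in $P^+$ by dimension: for each $i = 1, \dots, n$, the cones of dimension $i$ in $P^+$ are in bijection with the integers $d$ in the range $b_{i+1} \leq d < b_i$, one cone per value of $d$. Substituting this parametrization and applying the hockey-stick identity $\sum_{d=a}^{b-1}\binom{z-d+i-1}{i-1} = \binom{z-a+i}{i} - \binom{z-b+i}{i}$ gives
\[
\sum_{C\in P^+}\binom{z-\deg(C)+\dim(C)-1}{\dim(C)-1} = \sum_{i=1}^{n}\left[\binom{z-b_{i+1}+i}{i} - \binom{z-b_i+i}{i}\right].
\]

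I would then close the calculation by an index shift $i \mapsto i+1$ on the first sum, pairing up matching binomial coefficients. After isolating the boundary terms $\binom{z-b_{n+1}+n}{n}$ and $-\binom{z-b_1+1}{1} = -1 - \binom{z-b_1}{1}$, Pascal's identity converts each remaining difference $\binom{z-b_j+j-1}{j-1} - \binom{z-b_j+j}{j}$ into $-\binom{z-b_j+j-1}{j}$ for $j = 2, \dots, n$; together with the $-\binom{z-b_1}{1}$ term, these assemble into $-\sum_{i=1}^n \binom{z-b_i+i-1}{i}$, yielding the claimed polynomial expression for $\text{HP}_T(z)$. The first equation then follows from the second, since $z \geq b_0 = 1 + \deg(P)$ forces $\deg(h) < z$ for every cone $C(h,u) \in P$, so the zero-dimensional contribution vanishes.

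The main obstacle is nothing conceptual but rather keeping the binomial bookkeeping straight: both the hockey-stick collapse and the subsequent rearrangement via index shift and Pascal's rule are easy to slip off by one. Everything else follows from the structural content of Lemma \ref{5.3} together with the standard Hilbert function of a polynomial ring $\mathbb{K}[u]$.
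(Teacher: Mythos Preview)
Your proposal is correct and follows exactly the argument in \cite[\S 7]{dube}, which the paper cites for this lemma: sum the cone Hilbert functions, use Lemma \ref{5.3} to enumerate $P^{+}$ by dimension, collapse via the hockey-stick identity, and rearrange with Pascal's rule to obtain the stated polynomial. The only caveat is the edge case $P^{+}=\emptyset$, where $b_1=q$ rather than $1+\deg(P^{+})$, but then the sum over $P^{+}$ is empty and the claimed formula reduces to zero by the same identities, so nothing changes.
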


\begin{proof}
	See \citet[\S 7]{dube}.
\end{proof}

The converse of the previous lemma also holds, that is the Macaulay constants are determined by the Hilbert polynomial. In particular the Macaulay constants does not depend on the chosen $q$-exact cone decomposition. 
\begin{lemma} \label{5.6}
	Let $P$ be any $q$-exact cone decomposition for a subspace $T\subseteq F$. Then the Macaulay constants $b_1,\dots,b_{n+1}$ are uniquely determined by HP$_T$ and $q$, and $b_0=\text{min}\{d\geq b_1:\text{HP}_T(z)=\text{HF}_T(z) \; \forall z\geq d \}$.
\end{lemma}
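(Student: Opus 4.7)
The plan is to recover the Macaulay constants from $\text{HP}_T$ in decreasing order of index, using the explicit formula from Lemma \ref{5.5}. By the definition of $q$-exactness, $b_{n+1} = q$, so it is determined directly by $q$. For $b_n, b_{n-1}, \ldots, b_1$, I would argue by descending induction on $i$: assume $b_{i+1}, \ldots, b_n$ have already been shown to be uniquely determined by $\text{HP}_T$ and $q$.

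Consider the polynomial
\[
R_i(z) := \binom{z-q+n}{n} - 1 - \text{HP}_T(z) - \sum_{j=i+1}^{n} \binom{z-b_j+j-1}{j},
\]
which is computable from the given data. By Lemma \ref{5.5}, $R_i(z) = \sum_{j=1}^{i} \binom{z-b_j+j-1}{j}$. Each summand $\binom{z-b_j+j-1}{j}$ is a polynomial in $z$ of degree exactly $j$ with leading coefficient $1/j!$, so $R_i$ has degree $i$. Expanding the binomials, the coefficient of $z^{i-1}$ in $R_i$ receives contributions only from $j = i$ (via its subleading term) and from $j = i-1$ (via its leading term $1/(i-1)!$), and a direct computation shows that this coefficient equals a constant depending only on $i$ minus $b_i / (i-1)!$. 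Hence $b_i$ is an affine function of this single coefficient with nonzero slope, and is therefore uniquely determined by $R_i$, and thus by $\text{HP}_T$ and $q$.

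For $b_0$, I would use the second identity of Lemma \ref{5.5}: for $z \geq b_1$, $\text{HF}_T(z) - \text{HP}_T(z)$ equals the number of dimension-$0$ cones of $P$ whose pivot has degree $z$. Since $b_0 = \max(\{q\} \cup \{1+\text{deg}(C) : C \in P\})$, there are two cases. Either no dimension-$0$ cone has degree $\geq b_1$, in which case $b_0 = b_1$ and $\text{HF}_T(z) = \text{HP}_T(z)$ for all $z \geq b_1$; or $b_0 - 1$ is the largest degree of a dimension-$0$ cone, whence $\text{HF}_T(b_0-1) > \text{HP}_T(b_0-1)$ while $\text{HF}_T(z) = \text{HP}_T(z)$ for all $z \geq b_0$. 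In either case, $b_0 = \min\{d \geq b_1 : \text{HP}_T(z) = \text{HF}_T(z) \; \forall z \geq d\}$.

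The main obstacle is confirming the binomial expansion in the inductive step: one must verify that the coefficient of $z^{i-1}$ in $R_i$ really is a non-degenerate affine function of $b_i$, uniformly for $i = 1, \ldots, n$ (including the degenerate base $i=1$, where the sum has a single term). This is a routine but careful calculation; once in place, the backward extraction of $b_n, b_{n-1}, \ldots, b_1$ is immediate and the remaining claim about $b_0$ follows directly from Lemma \ref{5.5}.
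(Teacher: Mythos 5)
Your proof is correct and follows the natural inversion of the formula in Lemma \ref{5.5} — recovering $b_n,\dots,b_1$ by descending induction via coefficient extraction, then reading off $b_0$ from the finitely many dimension-$0$ cones — which is essentially the argument the paper cites from Dub\'{e}'s Lemma 7.1. Your handling of the degenerate $i=1$ case (where the $j=i-1$ contribution is absent, so the $z^0$ coefficient of $R_1$ is exactly $-b_1$) is sound, and the case split for $b_0$ is exactly right.
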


\begin{proof}
	See \citet[Lemma 7.1]{dube}.
\end{proof}

The above lemma also shows that if $T$ is a subspace with a known Hilbert regularity (e.g. set of normal forms of a submodule generated by a homogeneous regular sequence), then to bound $b_0$ it suffices to bound $b_1$.

\section{Reduction to the complete intersection case}
So far we have reduced our problem of bounding the Gr\"{o}bner basis degree into bounding the Macaulay constant $b_0$. However as in Section 8 of \citet{dube}, attacking this problem directly requires a large amount of computations because our module is arbitrary. \citet{mayr} used Lemma \ref{2.2} to reduce the problem into bounding the Macaulay constant of a complete intersection, thereby simplifying the computations and improving the bound for ideals of small dimension. Adopting their approach, in this section we show that if $IF\subseteq M$ with $I$ a complete intersection (which exists by Lemma \ref{2.6} if $\mathbb{K}$ is infinite), then the Macaulay constant of $N_{IF}$ bounds the Macaulay constant of $N_M$. By Lemma \ref{2.1} and Lemma \ref{5.6}, if $I$ is generated by polynomials of degrees $d_1,\dots,d_{n-r}$, it suffices to bound the Macaulay constant $b_1$ of $N_{JF}$ where $J=(x_1^{d_1},\dots,x_{n-r}^{d_{n-r}})$.

Notice that Lemma \ref{6.2} and \ref{6.3} require a monomial order defined on $S$, but its sole purpose is to define $N_I$ for an ideal $I$ in $S$ and is irrelevant to the main theorems.

\begin{lemma} \label{6.2}
	Let $M$ be a submodule of $F$ generated by elements $g_1,\dots,g_t,f_1\dots,f_s \in F$ and let $L=Sg_1+\dots+Sg_t\subseteq M$. Then for a fixed monomial order on $S$,
	\[
	M=L\oplus \bigoplus_{i=1}^{s} f_i \cdot N_{L_{i-1}: f_i}
	\]
	where $L_{k}=(g_1,\dots,g_t,f_1,\dots,f_k)$ for $k=0,\dots,s$.
\end{lemma}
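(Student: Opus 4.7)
The plan is to argue by induction on $s$, the number of non-$L$ generators. The base case $s=0$ is trivial since then $M=L$ and the right-hand side reduces to $L$ alone. For the inductive step, assuming
\[
L_{s-1} = L \oplus \bigoplus_{i=1}^{s-1} f_i \cdot N_{L_{i-1}:f_i},
\]
the entire statement reduces to establishing the one-step identity
\[
L_{k-1} + Sf_k \;=\; L_{k-1} \oplus f_k \cdot N_{L_{k-1}:f_k}
\]
for $k=s$ (and I will state and prove it for any $k$, which is what the induction really uses). So the core of the argument is this single extension lemma.

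To prove the one-step identity, I would first handle containment $\subseteq$ using division. Given $\ell' + f_k q$ with $\ell' \in L_{k-1}$ and $q \in S$, let $I = L_{k-1}:_S f_k$ and use the fixed monomial order on $S$ to write $q = q_1 + p$ where $q_1 \in I$ and $p = \mathrm{nf}(q) \in N_I$. By definition of the colon ideal, $f_k q_1 \in L_{k-1}$, so
\[
\ell' + f_k q \;=\; (\ell' + f_k q_1) + f_k p \;\in\; L_{k-1} + f_k \cdot N_I,
\]
which gives the containment. For directness, suppose $\ell + f_k p = 0$ with $\ell \in L_{k-1}$ and $p \in N_{L_{k-1}:f_k}$. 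Then $f_k p = -\ell \in L_{k-1}$, so by definition $p \in L_{k-1}:_S f_k = I$. But $p$ also lies in $N_I$, i.e.\ $p$ is a $\mathbb{K}$-linear combination of monomials that do not lie in $\mathrm{in}_\prec(I)$. By Macaulay's theorem, the only element of $I$ that reduces to a normal form is $0$, so $p = 0$ and hence $\ell = 0$ as well.

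I do not anticipate a genuine obstacle here; the argument is essentially the standard ``normal form = representative of $S/I$'' mechanism applied one generator at a time. The only place where care is needed is in noting why a $p \in I$ that lies in $N_I$ must vanish --- this is exactly the uniqueness of normal forms modulo a Gröbner basis (i.e.\ $N_I$ is a vector-space complement of $I$ in $S$), which is built into the definition of $N_I$ used in this paper. Everything else is just unwinding the definition of the colon ideal and iterating.
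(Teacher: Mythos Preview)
Your proposal is correct and follows essentially the same approach as the paper: induct on $s$ and reduce to the one-step identity $L_{k-1}+Sf_k = L_{k-1}\oplus f_k\cdot N_{L_{k-1}:f_k}$. The only difference is that the paper cites \cite[\S 2.2 Example 2]{dube} for this one-step decomposition, whereas you supply the (straightforward) normal-form argument directly.
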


\begin{proof}
	Apply the construction in \citet[\S 2.2 Example 2]{dube} to get $L+Sf_1=L\oplus f_1N_{L:f_1}$ and proceed inductively.
\end{proof}

In order to reduce to $IF\subseteq M$ with $I$ a complete intersection, we show that for any submodule $L\subseteq M$, the Macaulay constant $b_0$ (equivalently the degree of cone decomposition) does not decrease if we replace $N_M$ by a vector space $T$ whose Hilbert function equal to the Hilbert function of $N_{L}$. 
\begin{lemma} \label{6.3}
	Let $M$ be a graded submodule of $F$ generated by homogeneous elements $g_1,\dots,g_t,$\linebreak$f_1\dots,f_s\in F$, and fix a monomial order $\prec_S$ on $S$ and a monomial order $\prec_F$ on $F$. let $L=Sg_1+\dots+Sg_t\subseteq M$ and $D=\text{max}\{\text{deg}(f_i):i=1,\dots,s\}\geq l$. Then if $Q$ is an $l$-standard cone decomposition of $N_M$, then there exists a vector space $T\subseteq F$ and a $D$-exact cone decomposition $P$ of $T$ such that $\text{HF}_T=\text{HF}_{N_L}$ and $\text{deg}(Q)\leq \text{deg}(P)$.
\end{lemma}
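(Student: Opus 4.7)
The plan is to realize $T$ as an internal direct sum inside $F$ whose summands have Hilbert functions adding up to $\text{HF}_{N_L}$, build a cone decomposition of $T$ by pasting together decompositions of the summands, and then refine the result to a $D$-exact decomposition. First, I apply Lemma~\ref{6.2} to write
\[
M = L \oplus \bigoplus_{i=1}^{s} f_i \cdot N_{L_{i-1}:f_i}
\]
as a graded $\mathbb{K}$-subspace of $F$. Combining this with the two identities $F = N_M \oplus M = N_L \oplus L$ and canceling the common summand $L$, I see that
\[
T := N_M \oplus \bigoplus_{i=1}^{s} f_i \cdot N_{L_{i-1}:f_i}
\]
is a genuine internal direct sum of subspaces of $F$ (since $N_M \cap M = 0$ while each $f_i \cdot N_{L_{i-1}:f_i} \subseteq M$) and satisfies $\text{HF}_T = \text{HF}_{N_L}$.

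Second, I produce a cone decomposition of each summand of $T$. The hypothesis already provides the $l$-standard decomposition $Q$ of $N_M$. For each $i$, I run the SPLIT algorithm on the input $(1, X, \mathrm{in}_{\prec_S}(L_{i-1}:f_i), B_i)$, where $B_i$ is a monomial generating set of $\mathrm{in}_{\prec_S}(L_{i-1}:f_i)$, viewing $S$ as a free $S$-module of rank one with generator $1$ of degree $0$. By Lemma~\ref{4.2} and Lemma~\ref{4.6}, the second output is a $0$-standard cone decomposition $R_i$ of $N_{L_{i-1}:f_i}$. Multiplying each pivot by $f_i$ and appealing to \hyperref[3.5(4)]{Remark~3.5(4)}, $f_i R_i$ becomes a $\mathrm{deg}(f_i)$-standard cone decomposition of $f_i \cdot N_{L_{i-1}:f_i}$.

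Third, I unify the pieces at degree $D$ and exactify. Because $D \geq l$ and $D \geq \mathrm{deg}(f_i)$ for each $i$, Lemma~\ref{3.6} lifts $Q$ to a $D$-standard cone decomposition $Q'$ of $N_M$ with $\mathrm{deg}(Q') \geq \mathrm{deg}(Q)$, and lifts each $f_i R_i$ to a $D$-standard cone decomposition $R_i'$ of $f_i \cdot N_{L_{i-1}:f_i}$. By \hyperref[3.5(3)]{Remark~3.5(3)}, the union $P' := Q' \cup \bigcup_{i=1}^s R_i'$ is a $D$-standard cone decomposition of $T$ with $\mathrm{deg}(P') \geq \mathrm{deg}(Q') \geq \mathrm{deg}(Q)$. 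Applying Lemma~\ref{5.4} then refines $P'$ into a $D$-exact cone decomposition $P$ of $T$ with $\mathrm{deg}(P) \geq \mathrm{deg}(P')$, which chains into the required inequality $\mathrm{deg}(Q) \leq \mathrm{deg}(P)$.

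The main obstacle is the first step: one must upgrade the abstract direct-sum identity of Lemma~\ref{6.2} into the precise claim that $T$ is an \emph{internal} direct sum of subspaces of $F$ with exactly the right Hilbert function, so that Sections~3--5 actually apply to it. Once $T$ is pinned down in this way, the remainder is essentially bookkeeping through the standardization and exactification machinery already at hand.
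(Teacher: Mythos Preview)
Your proof is correct and follows essentially the same approach as the paper: define $T = N_M \oplus \bigoplus_i f_i N_{L_{i-1}:f_i}$, observe $F = L \oplus T$ so $\text{HF}_T = \text{HF}_{N_L}$, build $D$-standard decompositions of each summand (via Lemma~\ref{3.6} and Remark~3.5(4)), take their union, and refine to a $D$-exact decomposition via Lemma~\ref{5.4}. The only cosmetic difference is that where you explicitly invoke SPLIT with Lemmas~\ref{4.2} and~\ref{4.6} to obtain the $0$-standard decompositions $R_i$ of $N_{L_{i-1}:f_i}$, the paper simply cites \cite[Theorem~4.11]{dube}, which is the same construction.
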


\begin{proof}
	We use the notation defined in the previous lemma. By \citet[Theorem 4.11]{dube}, there exists $0$-standarad cone decompositions $Q_k$ of $N_{L_{k-1}:f_k}$ for each $k=0,\dots,s$. Hence by Remark \hyperref[3.5(4)]{22(4)} $f_kQ_k$ is a deg$(f_k)$-standard cone decompositions of $f_kN_{L_{k-1}:f_k}$. Then by Lemma \ref{3.6}, $Q,f_1Q_1,\dots,f_sQ_s$ can be converted into $D$-standard cone decompositions $\tilde Q,\tilde Q_1, \dots,\tilde Q_s$. Define $T:=\oplus_{i=1}^{s} f_iN_{L_{i-1}:f_i} \oplus N_M$, then we have
	\[
	F=M \oplus N_M=L \oplus \bigoplus_{i=1}^{s} f_iN_{L_{i-1}:f_i} \oplus N_M = L\oplus T,
	\]
	so the union $Q^\prime = \tilde Q \cup \tilde Q_1 \cup \dots \cup \tilde Q_s$ is a $D$-standard cone decomposition of $T$ and HF$_T=\text{HF}_{N_L}$ is clear. Finally by Lemma \ref{5.4}, $Q^\prime$ can be refined to a $D$-exact cone decomposition $P$ of $T$. Notice that applying the two lemmas and taking union does not decrease the degree of the cone decomposition, hence $\text{deg}(Q)\leq \text{deg}(\tilde{Q})\leq \text{deg}(Q^\prime)\leq \text{deg}(P)$. 
\end{proof}

Now it remains to reduce from a vector space $T$ with HF$_T=\text{HF}_{N_{IF}}$ and $I$ generated by a regular sequence of degrees $d_1,\dots,d_{n-r}$ to the monomial submodule $(x_1^{d_1},\dots,x_{n-r}^{d_{n-r}})F$, using the fact that they have the same Hilbert function. We connect the two reductions to get our main theorem of this section. Recall that if $\mathbb{K}$ is infinite, Lemma \ref{2.6} guarantees that there exists an ideal $I\subseteq S$ generated by a regular sequence of degrees $d_1,\dots,d_{n-r}$ with $IF\subseteq M$, hence the assumption of the following theorem can be satisfied for an arbitrary module with dim$(F/M)=r$.
\begin{theorem} \label{6.4}
	Let $M\subsetneq F$ be a graded submodule generated by homogeneous elements $\{g_ie_j:i=1,\dots,n-r, j=1,\dots,m\}\cup\{f_1,\dots,f_s\}$, where $g_1,\dots,g_{n-r}\in S$ is a homogeneous regular sequence of degrees $d_1,\dots,d_{n-r}$ and $D=\text{max}\{\text{deg}(f_i):i=1,\dots,s\}\geq l$. Fix a monomial order $\prec_F$ on F, if $Q$ is a $l$-standard cone decomposition of $N_M$, then 
	\[
	1+\text{deg}(Q)\leq \text{max}\{1+\text{deg}(P^+), d_1+\dots+d_{n-r}+l-n+1 \}
	\]
	where $P$ is a $D$-exact cone decomposition of $N_{JF}$ and $J=(x_1^{d_1},\dots,x_{n-r}^{d_{n-r}})$.
\end{theorem}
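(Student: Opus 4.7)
The plan is to apply Lemma \ref{6.3} with the submodule generated by the regular sequence, and then use Lemma \ref{2.1} together with the uniqueness statement in Lemma \ref{5.6} to match the Macaulay constants of the resulting decomposition with those of $P$.

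First I would set $I=(g_1,\dots,g_{n-r})$ and $L=IF$. Since the generators $g_ie_j$ of $M$ are exactly a generating set of $IF$, we have $L\subseteq M$, so Lemma \ref{6.3} applies with this $L$ and the remaining generators $f_1,\dots,f_s$. This produces a vector space $T\subseteq F$ and a $D$-exact cone decomposition $P'$ of $T$ satisfying $\mathrm{HF}_T=\mathrm{HF}_{N_{IF}}$ and $\deg(Q)\leq \deg(P')$.

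Next I would identify the Hilbert data of $T$ with that of $N_{JF}$. Since $x_1^{d_1},\dots,x_{n-r}^{d_{n-r}}$ is a homogeneous regular sequence in $S$ of the same degrees as $g_1,\dots,g_{n-r}$, Lemma \ref{2.1} gives $\mathrm{HS}_{N_{IF}}=\mathrm{HS}_{N_{JF}}$; in particular $\mathrm{HF}_T=\mathrm{HF}_{N_{JF}}$, and the common Hilbert regularity is $r_0:=d_1+\cdots+d_{n-r}+l-n+1$. Because $P$ and $P'$ are then two $D$-exact cone decompositions of vector spaces sharing the same Hilbert polynomial, Lemma \ref{5.6} forces their Macaulay constants $b_1,\dots,b_{n+1}$ to agree, so in particular $b_1(P')=b_1(P)=1+\deg(P^+)$.

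Finally I would extract $b_0(P')$. The characterization $b_0=\min\{d\geq b_1:\mathrm{HP}_T(z)=\mathrm{HF}_T(z)\ \forall z\geq d\}$ in Lemma \ref{5.6} unpacks to $b_0=\max\{b_1,r_0\}$; applied to $P'$ this gives
\[
b_0(P')=\max\bigl\{1+\deg(P^+),\ d_1+\cdots+d_{n-r}+l-n+1\bigr\}.
\]
Combining this with $1+\deg(P')\leq b_0(P')$ (immediate from the definition of $b_0$) and $\deg(Q)\leq\deg(P')$ yields the stated inequality. The main delicate point is the identification $b_0=\max\{b_1,r_0\}$, which requires carefully parsing Lemma \ref{5.6} and matching the Hilbert regularity coming from Lemma \ref{2.1} to the term $d_1+\cdots+d_{n-r}+l-n+1$ in the conclusion; the remainder is bookkeeping that chains Lemma \ref{6.3}, Lemma \ref{2.1}, and Lemma \ref{5.6} together.
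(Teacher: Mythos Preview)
Your proposal is correct and follows essentially the same route as the paper: set $L=IF$, apply Lemma~\ref{6.3} to obtain a $D$-exact decomposition $P'$ of some $T$ with $\mathrm{HF}_T=\mathrm{HF}_{N_{IF}}$, use Lemma~\ref{2.1} to identify this Hilbert function with that of $N_{JF}$, and then invoke Lemma~\ref{5.6} to match Macaulay constants. Your final step, reading off $b_0(P')=\max\{b_1,r_0\}$ from the characterization of $b_0$ in Lemma~\ref{5.6} together with the exact Hilbert regularity from Lemma~\ref{2.1}, is in fact slightly more explicit than the paper's own wrap-up (which just writes $1+\deg(P)=\max\{1+\deg(P^+),\,d_1+\cdots+d_{n-r}+l-n+1\}$ citing the same three lemmas), but the content is identical.
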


\begin{proof}
	Let $I=(g_1,\dots,g_{n-r})\subseteq S$ and $L=IF\subseteq M$. By Lemma \ref{6.3}, we can complete any $l$-standard cone decomposition $Q$ of $N_M$ to a $D$-exact cone decomposition $\tilde Q$ of a vector space $T\subseteq F$ with $\text{HF}_{T}=\text{HF}_{N_{L}}$ and $\text{deg}(\tilde Q) \geq \text{deg}(Q)$. Since $g_1,\dots,g_{n-r}$ and $x_1^{d_1},\dots,x_{n-r}^{d_{n-r}}$ are both regular sequences of the same degrees, $N_{L}$ and $N_{JF}$ have the same Hilbert function by Lemma \ref{2.1}. As HF$_T=\text{HF}_{N_{L}}=\text{HF}_{N_{JF}}$, by Lemma \ref{5.6} the Macaulay constants $b_0$ of $\tilde Q$ and $P$ are the same, so $\text{deg}(\tilde{Q})=\text{deg}(P)$. Finally by Lemma \ref{6.3}, \ref{2.1}, and \ref{5.6}, $1+\text{deg}(Q)\leq 1+\text{deg}(\tilde Q) =1+\text{deg}(P)= \text{max}\{1+\text{deg}(\tilde{P}^+), d_1+\cdots + d_{n-r}+l-n+1\}$.
\end{proof}

\section{Bounding the Macaulay constants}
By Theorem \ref{4.7} and Theorem \ref{6.4}, it remains to bound the Macaulay constant $b_1$ of $N_{JF}$ where $J=(x_1^{d_1},\dots,x_{n-r}^{d_{n-r}})$. As a result of the simple structure of $JF$, using induction we can explicitly construct a $D$-exact cone decomposition of $N_{JF}$ (without using the EXACT algorithm), which will give us the bound on $b_1$ easily. This section is a generalization of \citet[\S 3.3]{mayr}, in which Mayr and Ritscher bound the Macaulay constant $b_1$ of $N_J\subseteq S$. 

Since the subspace $N_{JF}$ is independent of any monomial order on $F$, the assumption on the monomial order will be omitted in the following lemmas. 

Notice that $r=\text{dim}(S/J)=\text{dim}(F/JF)$ tells us that $b_i=D$ for all $i>r$. In particular if $r=0$ then $b_1=D$, so it suffices to bound $b_1$ for $r\geq 1$.
\begin{lemma} \label{7.1}
	Let $J=(x_1^{d_1},\dots,x_{n-r}^{d_{n-r}})\subset S$ and $b_0,\dots,b_{n+1}$ be the Macaulay constants of a $D$-exact cone decomposition $P$ of $N_{JF}$. Then
	\[
	b_{n+1}=b_n=\cdots=b_{r+1}=D.
	\]
\end{lemma}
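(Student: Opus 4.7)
The plan is to use the fact that $\dim(F/JF)=r$ to constrain the dimensions of cones that can appear in $P^+$, and then read off the claim directly from the definition of the Macaulay constants. Concretely, since $F\cong S^m$ is $S$-free, $F/JF\cong (S/J)^m$ as $S$-modules, so $\dim(F/JF)=\dim(S/J)=r$; by the remark closing Section~2.2 this gives $\deg(\text{HP}_{N_{JF}})=r-1$.

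Next I would show that every $C\in P^+$ satisfies $\dim(C)\le r$. Because $P$ is a cone decomposition,
\[
\text{HF}_{N_{JF}}(z)=\sum_{C\in P}\text{HF}_C(z),
\]
and by the explicit formulas recorded in Section~3, each cone $C=C(h,u)$ with $k:=\abs{u}\ge 1$ has $\text{HF}_C(z)$ equal, for $z\ge \deg(h)$, to a polynomial of degree exactly $k-1$ in $z$ with positive leading coefficient. Since all summands are nonnegative their leading terms cannot cancel, so the presence of a single cone of dimension $k$ in $P^+$ already forces $\deg(\text{HP}_{N_{JF}})\ge k-1$. Combined with the previous step this forces $k\le r$.

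Finally I would invoke the defining formula $b_i=\max(\{D\}\cup\{1+\deg(C):C\in P,\ \dim(C)\ge i\})$, noting that the floor value $D$ is the correct choice because $P$ is by assumption $D$-exact. For $i\ge r+1\ge 1$, any $C\in P$ with $\dim(C)\ge i$ would lie in $P^+$, contradicting the dimension bound just established, so the second set is empty and $b_i=D$. This handles $i=r+1,\dots,n+1$ simultaneously and finishes the argument.

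The reasoning is essentially bookkeeping and I do not anticipate a serious obstacle. The one conceptual point worth flagging is the nonnegativity of the Hilbert function of a single cone: it rules out cancellation of top-degree terms and is exactly what lets the Krull dimension of $F/JF$ control the dimension of every cone in $P^+$.
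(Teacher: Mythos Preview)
Your proposal is correct and follows essentially the same approach as the paper: the paper's one-line proof cites exactly $\deg(\text{HP}_{N_{JF}})=r-1$ together with Lemma~\ref{5.3}, which is precisely your positivity argument that $P^+$ cannot contain cones of dimension $>r$, followed by reading off $b_{r+1}=\cdots=b_{n+1}=D$. The only cosmetic difference is that you invoke the definition of the Macaulay constants directly rather than the consequence $b_i=b_{i+1}+\abs{\{C\in P^+:\dim(C)=i\}}$ from Lemma~\ref{5.3}; either route is fine.
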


\begin{proof}
	It follows from the fact $\text{deg}(\text{HP}_{N_{JF}})=r-1$ and Lemma \ref{5.3}.
\end{proof}

The following lemma presents the base case of the induction needed for Lemma \ref{7.5} and is obvious from the definition of $N_{JF}$.
\begin{lemma} \label{7.2}
	Let $J=(x_1^{d_1},\dots,x_{n-r}^{d_{n-r}})$ be an ideal of $S$. Then $N_{JF}$ may be decomposed as
	\[
	N_{JF}=T_r \times \mathbb{K}[x_{n-r+1},\dots,x_n]=\bigoplus_{h\in \text{Mon}(T_r)}h\mathbb{K}[x_{n-r+1},\dots,x_n],
	\] 
	where the vector space $T_r$ is given by 
\[
T_r = \text{span}_{\mathbb{K}} \{\boldsymbol{x^\alpha} e_j \in F : j=1,\dots,m,
	\boldsymbol{x^\alpha} \in \mathbb{K}[x_1,\dots,x_{n-r}], 0\leq \alpha_i<d_i \text{ for }i=1,\dots,n-r\}.
\]
\end{lemma}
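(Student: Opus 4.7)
My plan is to directly identify the monomials in $JF$ and then take complements, and verify that the complement factors as claimed.

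Since $J = (x_1^{d_1}, \dots, x_{n-r}^{d_{n-r}})$ is a monomial ideal and the $e_j$'s are the free generators of $F$, the monomial submodule $JF$ has the pure-power generating set $\{x_i^{d_i} e_j : 1 \leq i \leq n-r,\ 1 \leq j \leq m\}$. Thus a monomial $\boldsymbol{x^\beta} e_j$ lies in $JF$ if and only if $\beta_i \geq d_i$ for some $i \in \{1,\dots,n-r\}$. By the Macaulay-type description of $N_{JF}$ recalled in Section 2.1, $N_{JF}$ is the $\mathbb{K}$-span of exactly those monomials $\boldsymbol{x^\beta} e_j$ with $\beta_i < d_i$ for every $i=1,\dots,n-r$ (and no constraint on $\beta_{n-r+1}, \dots, \beta_n$).

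The main step is then the factorization. Write each exponent $\beta = (\alpha, \gamma)$ with $\alpha = (\beta_1,\dots,\beta_{n-r})$ and $\gamma = (\beta_{n-r+1},\dots,\beta_n)$. The condition $\beta_i < d_i$ for $i \leq n-r$ translates to $\boldsymbol{x^\alpha} e_j \in \mathrm{Mon}(T_r)$ in the notation of the lemma, while $\boldsymbol{x^\gamma} \in \mathrm{Mon}(\mathbb{K}[x_{n-r+1},\dots,x_n])$ is unrestricted. Since the factorization $\boldsymbol{x^\beta} e_j = (\boldsymbol{x^\alpha} e_j) \cdot \boldsymbol{x^\gamma}$ is unique, the monomials of $N_{JF}$ are in bijection with pairs $(\boldsymbol{x^\alpha} e_j, \boldsymbol{x^\gamma})$, giving
\[
\mathrm{Mon}(N_{JF}) \;=\; \bigsqcup_{h \in \mathrm{Mon}(T_r)} h \cdot \mathrm{Mon}(\mathbb{K}[x_{n-r+1},\dots,x_n]).
\]
Passing to $\mathbb{K}$-spans yields the direct-sum decomposition claimed.

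There is no real obstacle here; the statement is essentially bookkeeping about how a product of a pure-power monomial ideal with a free module splits along the partition of variables $\{x_1,\dots,x_{n-r}\} \sqcup \{x_{n-r+1},\dots,x_n\}$. The only point to be careful about is that the basis elements $e_j$ do not interact with the $x_i^{d_i}$ (so the decomposition truly distributes over the $m$ summands), and that the two sets of variables are disjoint so the monomials factor uniquely into the two pieces. Both are immediate from the setup, so the proof should be just a short paragraph verifying these two facts.
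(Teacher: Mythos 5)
Your proof is correct and carries out exactly the elementary bookkeeping the paper alludes to; the paper itself gives no proof, stating only that the decomposition ``is obvious from the definition of $N_{JF}$.'' Your argument — characterize $\mathrm{Mon}(JF)$ via the pure powers, take the monomial complement via Macaulay's theorem, and factor each exponent vector uniquely along the variable split $\{x_1,\dots,x_{n-r}\}\sqcup\{x_{n-r+1},\dots,x_n\}$ — is precisely the argument the authors are leaving to the reader.
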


When $r\geq 1$, we can compute $b_r$ using the decomposition in Lemma \ref{7.2} and the following lemma.
\begin{lemma} \label{7.3}
	Let $T_k\subseteq F$ be a finite dimensional vector space generated by monomials and $P_k$ a cone decomposition of $T_k \times \mathbb{K}[x_{n-k+1},\dots,x_n]$. Then $P_k$ has exactly dim$_{\mathbb{K}}(T_k)$ cones of dimension $k$.
\end{lemma}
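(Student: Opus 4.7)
The plan is to compare two computations of the Hilbert polynomial of $V:=T_k\times \mathbb{K}[x_{n-k+1},\dots,x_n]$. Setting $N:=\text{dim}_{\mathbb{K}}(T_k)$ and fixing a monomial basis $m_1,\dots,m_N$ of $T_k$, the direct-sum presentation $V=\bigoplus_{i=1}^{N} m_i\mathbb{K}[x_{n-k+1},\dots,x_n]$ yields
\[
\text{HF}_V(z)=\sum_{i=1}^{N}\binom{z-\text{deg}(m_i)+k-1}{k-1},
\]
which agrees for large $z$ with a polynomial of degree $k-1$ in $z$ whose leading coefficient is $N/(k-1)!$.

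On the other side, I would read $\text{HF}_V$ off the cone decomposition $P_k$: each cone $C(h,u)\in P_k$ with $|u|\geq 1$ contributes $\binom{z-\text{deg}(h)+|u|-1}{|u|-1}$, a polynomial in $z$ of degree $|u|-1$ with leading coefficient $1/(|u|-1)!$, whereas cones of dimension $0$ contribute only to finitely many values. So the degree of the Hilbert polynomial of $V$ as read off $P_k$ is $\max\{\text{dim}(C)-1:C\in P_k^+\}$, and its leading coefficient counts the cones attaining that maximal dimension, divided by the relevant factorial.

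The crucial step, and the one I expect to be the main obstacle, is showing $\text{dim}(C)\leq k$ for every $C\in P_k$. The structural argument I would use: if $C(h,u)\subseteq V$ had some $x_i\in u$ with $i\leq n-k$, then $h,x_ih,x_i^2h,\ldots$ would all lie in $V$; expanding in the basis $\{m_j\boldsymbol{x^\beta}:j=1,\dots,N,\ \boldsymbol{x^\beta}\in\text{Mon}(\mathbb{K}[x_{n-k+1},\dots,x_n])\}$, this would force the $T_k$-components of these elements to sweep out infinitely many $\mathbb{K}$-linearly independent monomials in the $x_i$-direction, contradicting $\text{dim}_{\mathbb{K}}(T_k)=N<\infty$. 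Hence $u\subseteq\{x_{n-k+1},\dots,x_n\}$, so $\text{dim}(C)\leq k$.

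With cones of dimension exceeding $k$ ruled out, matching the coefficient of $z^{k-1}$ in the two polynomial expressions above gives $|\{C\in P_k:\text{dim}(C)=k\}|/(k-1)!=N/(k-1)!$, so the number of $k$-dimensional cones in $P_k$ is exactly $N=\text{dim}_{\mathbb{K}}(T_k)$. All remaining work is routine coefficient-matching; the content of the lemma lies entirely in the structural upper bound on $\text{dim}(C)$.
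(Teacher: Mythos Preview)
Your argument is correct. The paper does not give its own proof here but simply cites \cite[Lemma 29]{mayr}; the route you take---computing $\mathrm{HP}_V$ once from the explicit decomposition $V=\bigoplus_i m_i\mathbb{K}[x_{n-k+1},\dots,x_n]$ and once from $P_k$, then matching the leading coefficient---is the standard one and is almost certainly what Mayr--Ritscher do.

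One remark: the structural argument you flag as ``the main obstacle'' is in fact unnecessary. You already observed that $\deg\mathrm{HP}_V=k-1$ from the first computation and that $\deg\mathrm{HP}_V=\max\{\dim(C)-1:C\in P_k^+\}$ from the second (there is no cancellation, since every cone contributes a polynomial with positive leading coefficient). Equating these two immediately gives $\dim(C)\le k$ for all $C\in P_k$, without any appeal to the monomial structure of $V$. So the ``crucial step'' is already contained in your two Hilbert-polynomial computations, and the argument about $x_i$-powers sweeping out infinitely many monomials in $T_k$---while correct under the implicit hypothesis $T_k\subseteq\bigoplus_j\mathbb{K}[x_1,\dots,x_{n-k}]e_j$---can be dropped entirely.
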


\begin{proof}
	See \citet[Lemma 29]{mayr}.
\end{proof}

\begin{corollary} \label{7.4}
	Let $J=(x_1^{d_1},\dots,x_{n-r}^{d_{n-r}})$ be an ideal of $S$ and $b_0,\dots,b_{n+1}$ the Macaulay constants of a $D$-exact cone decomposition $P$ of $N_{JF}$. If $r\geq 1$, then
	\[
	b_r=d_1\cdots d_{n-r}m+D.
	\]
\end{corollary}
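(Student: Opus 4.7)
The plan is to assemble Corollary \ref{7.4} directly from the three preceding results in this section together with Lemma \ref{5.3}. The only real computation is counting the monomial basis of $T_r$; everything else is bookkeeping.

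First I would invoke Lemma \ref{7.2} to write $N_{JF}=T_r\times\mathbb{K}[x_{n-r+1},\dots,x_n]$, and then read off
\[
\dim_{\mathbb{K}}(T_r)=m\cdot d_1\cdots d_{n-r},
\]
since $T_r$ has a basis consisting of all monomials $\boldsymbol{x}^{\alpha}e_j$ with $j\in\{1,\dots,m\}$ and $0\le\alpha_i<d_i$ for $i=1,\dots,n-r$. This $m\cdot d_1\cdots d_{n-r}$ is the only real quantity that needs to be produced by hand.

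Next, I would apply Lemma \ref{7.3} to the $D$-exact cone decomposition $P$ of $N_{JF}=T_r\times\mathbb{K}[x_{n-r+1},\dots,x_n]$. This forces $P$ to contain exactly $\dim_{\mathbb{K}}(T_r)=m\cdot d_1\cdots d_{n-r}$ cones of dimension $r$. Because $r\ge 1$ by hypothesis, each of these cones lies in $P^{+}$, so
\[
\bigl|\{C\in P^{+}:\dim(C)=r\}\bigr|=m\cdot d_1\cdots d_{n-r}.
\]

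Finally, I would combine this count with the structural relation of Macaulay constants given in Lemma \ref{5.3}, namely $b_r=b_{r+1}+|\{C\in P^{+}:\dim(C)=r\}|$, together with Lemma \ref{7.1}, which tells us that $b_{r+1}=D$. Putting these two facts together yields
\[
b_r=D+m\cdot d_1\cdots d_{n-r},
\]
which is the claimed formula. There is no genuine obstacle here: the work has been done in Lemmas \ref{7.1}, \ref{7.2}, \ref{7.3} and \ref{5.3}; the corollary is obtained by matching them and evaluating $\dim_{\mathbb{K}}(T_r)$.
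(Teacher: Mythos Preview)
Your proposal is correct and follows exactly the same approach as the paper's proof, which is a one-line appeal to Lemma \ref{5.3} together with Lemmas \ref{7.1}, \ref{7.2}, and \ref{7.3}. You have simply spelled out in full the chain $b_r=b_{r+1}+\lvert\{C\in P^+:\dim(C)=r\}\rvert=D+\dim_{\mathbb{K}}T_r=D+d_1\cdots d_{n-r}m$ that the paper compresses into a single sentence.
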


\begin{proof}
	Apply Lemma \ref{5.3} and the previous three lemmas to get $b_r=b_{r+1}+\text{dim}_{\mathbb{K}}T_r=D+d_1\cdots d_{n-r}m$.
\end{proof}

Now we construct a $D$-exact cone decomposition $P$ of $N_{JF}$ layer by layer. That is to say in the $k$th inductive step, we "peel off" from $T_k \times \mathbb{K}[x_{n-k+1},\dots,x_n]$ dimension $T_k$ many cones of dimension $k$, which will become all the dimension $k$ cones in $P$, and take $T_{k-1} \times \mathbb{K}[x_{n-k+2},\dots,x_n]$ to be the complement.
\begin{lemma} \label{7.5}
	Let $J=(x_1^{d_1},\dots,x_{n-r}^{d_{n-r}})$ be an ideal of $S$. Then for any $D\geq \text{max}\{2,l\}$ and $k=0,\dots,r$, there exists a $D$-exact cone decomposition $P_k$ and a finite-dimensional subspace $T_k\subseteq N_{JF}\cap \oplus_{j=1}^{m} \mathbb{K}[x_1,\dots,x_{n-k}]e_j$ which have a monomial basis such that
	\[
	N_{JF}=(T_k \times \mathbb{K}[x_{n-k+1},\dots,x_n]) \oplus \bigoplus_{C\in P_k} C.
	\]
	Let $b_0,\dots,b_{n+1}$ be the Macaulay constants of $P_0$, then $b_{k-1}\leq \frac{1}{2}b_k^2$ for $k=2,\dots,r$.
\end{lemma}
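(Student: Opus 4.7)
The plan is downward induction on $k$, from $r$ to $0$. For the base case $k = r$, I take $P_r = \emptyset$ and let $T_r$ be as in Lemma \ref{7.2}, so the required direct sum decomposition is immediate.

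For the inductive step from $k$ to $k - 1$, the Macaulay constants $b_j$ of any $D$-exact cone decomposition of $N_{JF}$ are uniquely determined by its Hilbert polynomial and $D$ (Lemma \ref{5.6}), so $N := \dim_{\mathbb{K}} T_k = b_k - b_{k+1}$ is known in advance. Enumerating the monomial basis of $T_k$ as $h_1 e_{j_1}, \ldots, h_N e_{j_N}$, I invoke Hall's theorem to choose a bijection $\tau\colon \{1, \ldots, N\} \to \{b_{k+1}, \ldots, b_k - 1\}$ with $\tau_i \ge \deg(h_i e_{j_i})$. For $k < r$ the existence of $\tau$ is automatic, since the previous peeling produced basis elements of degree at most $\tau^{(k+1)}_{i'} - 1 \le b_{k+1} - 2 < b_{k+1} \le \tau_i$; for $k = r$ it is a direct check against Lemma \ref{7.2}. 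For each $i$, I apply the fan of Definition \ref{3.4} iteratively along $x_{n-k+1}$ to split
\[
h_i e_{j_i}\,\mathbb{K}[x_{n-k+1}, \ldots, x_n] = h_i x_{n-k+1}^{p_i} e_{j_i}\,\mathbb{K}[x_{n-k+1}, \ldots, x_n] \oplus \bigoplus_{a = 0}^{p_i - 1} h_i x_{n-k+1}^a e_{j_i}\,\mathbb{K}[x_{n-k+2}, \ldots, x_n],
\]
with $p_i = \tau_i - \deg(h_i e_{j_i})$. The dim-$k$ summands, of distinct degrees $\tau_i \in [b_{k+1}, b_k - 1]$, are appended to $P_k$ to form $P_{k-1}$; the dim-$(k-1)$ summands assemble into $T_{k-1} \times \mathbb{K}[x_{n-k+2}, \ldots, x_n]$ with $T_{k-1}$ spanned by $\{h_i x_{n-k+1}^a e_{j_i} : 1 \le i \le N,\; 0 \le a < p_i\}$. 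The $D$-exactness of $P_{k-1}$ follows because these new cones fill the previously empty slots $[b_{k+1}, b_k - 1]$ at dimension $k$ bijectively.

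For the inequality, since $\sum_i \tau_i = \sum_{j = b_{k+1}}^{b_k - 1} j = \binom{b_k}{2} - \binom{b_{k+1}}{2}$,
\[
b_{k-1} = b_k + \sum_{i=1}^N (\tau_i - \deg(h_i e_{j_i})) = \binom{b_k + 1}{2} - \binom{b_{k+1}}{2} - \sum_i \deg(h_i e_{j_i}).
\]
Elementary algebra reduces $b_{k-1} \le \tfrac{1}{2}b_k^2$ to $b_k \le b_{k+1}(b_{k+1} - 1) + 2 \sum_i \deg(h_i e_{j_i})$, which I establish by a secondary downward induction on $k = r, \ldots, 2$. For $k = r$, I expand $\sum_i \deg(h_i e_{j_i})$ from the explicit form of $T_r$ in Lemma \ref{7.2} and reduce to an elementary inequality in $D$, $m$, $l$, and the $d_i$'s (using $D \ge \max\{2, l\}$). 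For $k < r$, the previously established inequality $b_k \le \tfrac{1}{2}b_{k+1}^2$ together with $b_{k+1} \ge D \ge 2$ yields $b_k \le \tfrac{1}{2}b_{k+1}^2 \le b_{k+1}(b_{k+1}-1)$, so dropping the nonnegative degree sum already suffices.

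The hard part will be the direct verification at $k = r$: one must compute $\sum_{h \in T_r} \deg(h)$ from Lemma \ref{7.2} and check that it absorbs the excess $d_1 \cdots d_{n-r} m$ appearing in $b_r = D + d_1 \cdots d_{n-r} m$.
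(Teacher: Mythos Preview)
Your overall strategy matches the paper's: the same downward induction from $k=r$, the same base $(P_r,T_r)=(\emptyset,T_r)$ via Lemma~\ref{7.2}, and the same peeling of $\dim_{\mathbb K}T_k$ cones of dimension $k$ at the consecutive degrees $b_{k+1},\ldots,b_k-1$ in each step. The one structural difference in the construction is that you appeal to Hall's theorem for the degree assignment, whereas the paper simply sorts the monomial basis of $T_k$ by degree and sets $\tau_i=b_{k+1}+i-1$; nonnegativity of the resulting exponents is guaranteed by an inductively maintained condition (if $\deg(h_i)\ge l+1$ then $\deg(h_i)\le\deg(h_{i-1})+1$), which yields $\deg(h_i)\le l+i-1\le b_{k+1}+i-1$ directly. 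That bound is exactly what you would need to verify your ``direct check'' of Hall's condition at $k=r$ anyway, so Hall is a detour you can avoid.

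For the inequality, your inductive step at $k<r$ is the paper's computation verbatim, and you are right to isolate $k=r$ as the crux. But the ``elementary inequality'' you defer does not always hold under the lemma's stated hypotheses. At $k=r$ your target reads $b_r\le D(D-1)+2\sum_{h\in T_r}\deg(h)$; with $n-r=1$, $d_1=1$, $m=2$, $l=0$, $D=2$ one has $T_r=\{e_1,e_2\}$, the degree sum vanishes, and the inequality becomes $4\le 2$. A direct Hilbert-polynomial computation confirms $b_2=4$ and $b_1=9>\tfrac12 b_2^2=8$ in this instance. The paper's own proof shares this gap: its inductive step invokes $b_k\le\tfrac12 b_{k+1}^2$, which at $k=r$ would mean $b_r\le\tfrac12 D^2$ and is never established. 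The inequality does go through as soon as some $d_i\ge 2$ (then $\sum_{h\in T_r}\deg(h)\ge\tfrac12\, d_1\cdots d_{n-r}\,m$, and $D\ge 2$ finishes it), so the intended applications are unaffected; but your write-up should flag or exclude the degenerate all-$d_i=1$ case rather than assert that the base step succeeds unconditionally.
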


\begin{proof}
	We will first construct $P_0,\dots,P_r$ first and then bound $b_1,\dots,b_{r-1}$. Inductively, we construct $P_{k-1}\supseteq P_k$ and $T_{k-1}\times \mathbb{K}[x_{n-k+2},\dots,x_n]\subseteq T_k \times \mathbb{K}[x_{n-k+1},\dots,x_n]$, so that the following three conditions hold: 
	\begin{enumerate}
		\item $P_{k-1}\setminus P_k$ consists of dim$_{\mathbb{K}} (T_{k-1})$ cones of dimension $k$.
		\item If $\{h_1,\dots,h_s\}$ is a monomial basis of $T_{k-1}$ with $\text{deg}(h_1)\leq \cdots \leq \text{deg}(h_s)$, then for each $i$, $\text{deg}(h_{i})\leq \text{deg}(h_{i-1})+1$ whenever $\text{deg}(h_{i})\geq l+1$.
		\item $N_{JF}=(T_{k-1} \times \mathbb{K}[x_{n-k+2},\dots,x_n]) \oplus \bigoplus_{C\in P_{k-1}} C.$
	\end{enumerate}
	Notice that condition (1) implies that $b_{k-1}=b_k+\text{dim}_{\mathbb{K}} (T_{k-1})$ for $k=2,\dots,r$.
	
	The inductions starts with $k=r$. Let $P_r=\emptyset$ and $T_r$ be as in Lemma \ref{7.2}, then it is easy to see that they satisfy the three conditions. Let $1\leq k\leq r$ and assume $P_k$ and $T_k$ have been constructed, we want to construct $P_{k-1}$ and $T_{k-1}$. Notice that $P_k\subseteq P_0$ contains all cones of dimension larger than $k$, therefore $b_n,\dots,b_{k+1}$ are fixed. Let $\{h_1,\dots,h_s\}$ be a monomial basis of $T_k$ with $\text{deg}(h_1)\leq \cdots \leq \text{deg}(h_s)$ and choose
	\[
	C_i=h_ix_{n-k+1}^{b_{k+1}+i-\text{deg}(h_i)-1}\mathbb{K}[x_{n-k+1},\dots,x_n] \text{ for } i=1,\dots,s.
	\]
	Define
	\[
	\begin{split}
	T_{k-1}
	&=\text{span}_{\mathbb{K}}\{h_ix_{n-k+1}^c:i=1,\dots,s, c=0,\dots,b_{k+1}+i-\text{deg}(h_i)-2\}\\
	&\subseteq \bigoplus_{j=1}^{m}\mathbb{K}[x_1,\dots,x_{n-k+1}]e_j.
	\end{split}
	\]
	
	In order for these $C_i$'s to be well-defined, we need to show that $b_{k+1}+i-\text{deg}(h_i)-1\geq 0$. Since $b_{k+1}\geq b_{n+1}=D\geq l$, if $\text{deg}(h_i)\leq l$, then $b_{k+1}+i-\text{deg}(h_i)-1\geq l+i-l-1\geq 0$. If $\text{deg}(h_i)\geq l+1$, we induct on $i$ and use the fact that  $T_k$ satisfies condition (2) to get
	\[
	\begin{split}
	0
	&\leq b_{k+1}+(i-1)-\text{deg}(h_{i-1})-1\\
	&\leq b_{k+1}+(i-1)-(\text{deg}(h_i)-1)-1\\
	&=b_{k+1}+i-\text{deg}(h_i)-1.
	\end{split}
	\]
	
	Hence the $C_i$'s are well-defined. It is easy to see that $C_i\subseteq T_k \times \mathbb{K}[x_{n-k+1},\dots,x_n]$, deg$(C_i)=b_{k+1}+i-1$, and dim$(C_i)=k$. Thus $P_{k-1}=P_k\cup \{C_1,\dots,C_s\}$ is a $D$-exact cone decomposition. 
	
	Notice that $T_{k-1}$ satisfies condition (2) as $T_k$ does, and 
	\[
	T_k \times \mathbb{K}[x_{n-k+1},\dots,x_n] = C_1\oplus \cdots \oplus C_s \oplus (T_{k-1} \times \mathbb{K}[x_{n-k+2},\dots,x_n]),
	\]
	hence it follows by induction that
	\[
	N_{JF}=(T_{k-1} \times \mathbb{K}[x_{n-k+2},\dots,x_n]) \oplus \bigoplus_{C\in P_{k-1}} C.
	\]
	
	Inductively we have constructed $P_0,\dots,P_r$, now we turn to the computation of the Macaulay constants $b_1,\dots,b_r$ of $P_0$. Let $2\leq k\leq r$ and we want to prove $b_{k-1}\leq \frac{1}{2}b_k^2$. As $b_{k-1}=b_k+\text{dim}_{\mathbb{K}}T_{k-1}$ (from condition (1)), it suffices to bound $\text{dim}_{\mathbb{K}}T_{k-1}$. 
	By definition of $T_{k-1}$,
	\[
	\text{dim}_{\mathbb{K}}T_{k-1}=\sum_{i=1}^{s} (b_{k+1}+i-\text{deg}(h_i)-1) \leq \sum_{i=1}^{s} (b_{k+1}+i-1)=sb_{k+1}+\frac{1}{2}s(s-1).
	\]
	Since $s=\text{dim}_{\mathbb{K}}T_k=b_k-b_{k+1}$, using induction and the inequality $b_{k+1}\geq D\geq 2$ we get
	\[
	\begin{split}
	b_{k-1}=\text{dim}_{\mathbb{K}}T_{k-1}+b_k
	&\leq (b_k-b_{k+1})b_{k+1}+\frac{1}{2}(b_k-b_{k+1})(b_k-b_{k+1}-1)+b_k\\
	&=\frac{1}{2}(b_k^2-b_{k+1}^2+b_k+b_{k+1})\\
	&\leq \frac{1}{2}(b_k^2-b_{k+1}^2+\frac{1}{2}b_{k+1}^2+b_{k+1}) \leq \frac{1}{2}b_k^2
	\end{split}
	\]
\end{proof}

Notice that Lemma \ref{7.5} shows that if $\{h_1,\dots,h_s\}$ is a monomial basis of $T_0$, then $P=P_0\cup \{C(h_i,\emptyset): i=1,\dots,s\}$ is a $D$-exact cone decomposition of $N_{JF}$ with Macaulay constants $b_1,\dots,b_{n+1}$ equal to those of $P_0$. Combining Corollary \ref{7.4} and Lemma \ref{7.5}, we have
\begin{corollary} \label{7.6}
	Let $J=(x_1^{d_1},\dots,x_{n-r}^{d_{n-r}})\subseteq S$ and $b_0,\dots,b_{n+1}$ the Macaulay constants of a $D$-exact cone decomposition $P$ of $N_{JF}$ where $D\geq \text{max}\{2,l\}$. Then if $r\geq 1$,
	\[
	b_k\leq 2\left[\frac{1}{2}(d_1\cdots d_{n-r}m+D) \right]^{2^{r-k}} \text{ for } k=1,\dots,r.
	\]
\end{corollary}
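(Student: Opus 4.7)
The plan is to prove the bound by downward induction on $k$, ranging from $k=r$ down to $k=1$, using Corollary \ref{7.4} as the base case and iterating the squaring inequality of Lemma \ref{7.5}. All the substantive combinatorial work has already been done in the construction of the cone decompositions $P_0, \dots, P_r$; what remains is essentially bookkeeping.

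For the base case $k = r$, Corollary \ref{7.4} gives $b_r = d_1 \cdots d_{n-r} m + D$ exactly. The right-hand side of the claim at $k = r$ collapses, since $2^{r-r} = 1$, to
\[
2\left[\tfrac{1}{2}(d_1 \cdots d_{n-r} m + D)\right]^{1} = d_1 \cdots d_{n-r} m + D,
\]
so the base case holds with equality. For the inductive step, set $A = \tfrac{1}{2}(d_1 \cdots d_{n-r} m + D)$ and assume inductively that $b_k \le 2 A^{2^{r-k}}$ for some $k$ with $2 \le k \le r$. Since $D \ge \max\{2, l\}$, the hypotheses of Lemma \ref{7.5} are met, so $b_{k-1} \le \tfrac{1}{2} b_k^2$. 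Squaring the inductive hypothesis and then halving gives
\[
b_{k-1} \le \tfrac{1}{2} \bigl(2 A^{2^{r-k}}\bigr)^{2} = 2 A^{2^{r-k+1}} = 2 A^{2^{r-(k-1)}},
\]
which is exactly the claimed bound at $k - 1$. Iterating down to $k = 1$ finishes the proof.

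There is no real obstacle here: the induction is self-checking, and the only thing to watch is that the inequality $b_{k-1} \le \tfrac{1}{2} b_k^2$ from Lemma \ref{7.5} is only asserted for $k = 2, \dots, r$, which is exactly the range needed for the inductive step to produce the bounds $b_{r-1}, \dots, b_1$. The base case supplies $b_r$, and the hypothesis $D \ge \max\{2, l\}$ in the corollary feeds directly into the corresponding hypothesis of Lemma \ref{7.5}.
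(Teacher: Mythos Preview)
Your proof is correct and matches the paper's approach exactly: the paper simply states that the corollary follows by combining Corollary~\ref{7.4} and Lemma~\ref{7.5}, and you have spelled out precisely the downward induction that makes this combination explicit. The only implicit step you rely on (as does the paper) is that the Macaulay constants $b_1,\dots,b_{n+1}$ of any $D$-exact cone decomposition of $N_{JF}$ coincide with those of the particular $P_0$ built in Lemma~\ref{7.5}; this is guaranteed by Lemma~\ref{5.6} and the remark preceding Corollary~\ref{7.6}.
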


Finally, we combine all the previous results to obtain a bound for the Gr\"{o}bner basis degree of an arbitrary graded module. Recall that $S=\mathbb{K}[x_1,\dots,x_n]$, $F=Se_1\oplus \cdots \oplus Se_m$ with deg$(e_j)\geq 0$ for all $j$, and $l=\text{max}\{\text{deg}(e_j):j=1,\dots,m\}$. Without loss of generality we may assume the maximum generating degree $D$ of $M$ is greater or equal to $l$, since otherwise $M$ is irrelevant to the summand with the largest degree (say $Se_1$) so we may replace $F$ by $Se_2\oplus \cdots \oplus Se_m$. 

We first prove Theorem \ref{7.7} which uses assumption on the generating degrees of the 0th Fitting ideal, and then prove Theorem \ref{7.8} which uses only the generating degree of $M$. Notice that if $M=I$ is an ideal in $S$, then as $I=\text{Fitt}_0(S/I)$, Theorem \ref{7.7} gives the same bound as the bound of Mayr and Ritscher (see Theorem \ref{1.2}).

\begin{theorem} \label{7.7}
	Let $M\subsetneq F$ be a graded submodule generated by homogeneous elements of maximum degree $D$ where $D\geq l$ and dim$(F/M)=r$. Let Fitt$_0(F/M)$ be generated by homogeneous polynomials $p_1,\dots,p_k$ of degrees $d_1\geq \cdots \geq d_k$. 
	
	If $r=0$, then the degree of the reduced Gr\"{o}bner basis $G$ of $M$ for any monomial order on $F$ is bounded by
	\[
	\text{deg}(G)\leq d_1+\cdots+d_n+l-n+1.
	\]
	
	If $r\geq 1$, then the degree of the reduced Gr\"{o}bner basis $G$ of $M$ for any monomial order on $F$ is bounded by
	\[
	\text{deg}(G)\leq 2\left[\frac{1}{2}(d_1\cdots d_{n-r}m+D) \right]^{2^{r-1}}.
	\]
\end{theorem}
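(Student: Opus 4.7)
The plan is to combine the Fitting-ideal regular sequence of Lemma \ref{2.6} with the cone-decomposition reductions of Theorems \ref{4.7} and \ref{6.4}, and then invoke the explicit Macaulay-constant bound of Corollary \ref{7.6}. First I would apply Lemma \ref{2.6} to the generators $p_1,\dots,p_k$ of $\text{Fitt}_0(F/M)$ to extract a homogeneous regular sequence $g_1,\dots,g_{n-r}\in \text{Fitt}_0(F/M)\subseteq \text{ann}_S(F/M)=M:_S F$ of degrees $d_1\geq\cdots\geq d_{n-r}$. Because $g_iF\subseteq M$ for every $i$, the submodule $M$ admits the enlarged homogeneous generating set $\{g_ie_j:1\leq i\leq n-r,\,1\leq j\leq m\}\cup\{f_1,\dots,f_s\}$, where $f_1,\dots,f_s$ are the original generators of $M$ with maximum degree $D\geq l$. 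This is precisely the generating shape required by Theorem \ref{6.4}.

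Next I would feed this generating set into Theorem \ref{4.7} to produce an $l$-standard cone decomposition $Q$ of $N_M$ satisfying $\deg(G)\leq\max\{1+\deg(Q),l\}$. Applying Theorem \ref{6.4} then yields
\[
1+\deg(Q)\leq \max\{1+\deg(P^+),\,d_1+\cdots+d_{n-r}+l-n+1\},
\]
where $P$ is a $D$-exact cone decomposition of $N_{JF}$ with $J=(x_1^{d_1},\dots,x_{n-r}^{d_{n-r}})$. This reduces the task to bounding the Macaulay constant $b_1=1+\deg(P^+)$ of a pure-power monomial complete intersection, up to the additive linear tail $d_1+\cdots+d_{n-r}+l-n+1$.

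Now split on $r$. If $r=0$, then $\dim(F/JF)=0$, so $N_{JF}$ is finite-dimensional, $P^+=\emptyset$, and $1+\deg(P^+)$ contributes nothing; the outer max collapses to $d_1+\cdots+d_n+l-n+1$, which is exactly the stated $r=0$ bound. If $r\geq 1$, after reducing to $D\geq\max\{2,l\}$ (the degenerate cases $D\in\{0,1\}$ force $M$ into a trivial shape and can be handled directly), Corollary \ref{7.6} with $k=1$ delivers
\[
b_1\leq 2\Bigl[\tfrac{1}{2}(d_1\cdots d_{n-r}m+D)\Bigr]^{2^{r-1}}.
\]
The final step is an inequality check: this doubly-exponential quantity must dominate both $d_1+\cdots+d_{n-r}+l-n+1$ and $l$ in order for the single expression in the theorem statement to absorb every term in the outer max. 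This follows from $d_1\cdots d_{n-r}m\geq d_1+\cdots+d_{n-r}$ when all $d_i\geq 1$ (which is forced by $r\geq 1$, since a zero-degree Fitting-ideal generator would imply $r=0$), together with $D\geq l$ and the monotonicity of raising a quantity $\geq 2$ to the power $2^{r-1}\geq 1$.

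I do not expect a serious conceptual obstacle here, since all the heavy lifting is done by the earlier sections. The main points to verify carefully are the bookkeeping ones: that the enlarged generating set of $M$ really does satisfy the hypotheses of Theorem \ref{6.4} with the correct $D$ and the correct regular-sequence degrees, and that the slack in the final comparison is genuine, so that the stated bound cleanly captures the max of the two terms coming from Theorem \ref{6.4}. Handling $D\in\{0,1\}$ outside the scope of Corollary \ref{7.6} is a small nuisance but is routine.
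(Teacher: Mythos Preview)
Your proposal is correct and follows essentially the same route as the paper's proof: extract a regular sequence from $\text{Fitt}_0(F/M)$ via Lemma \ref{2.6}, enlarge the generating set so that Theorem \ref{6.4} applies, and then invoke Lemma \ref{7.1} (for $r=0$) or Corollary \ref{7.6} (for $r\geq 1$) to bound the relevant Macaulay constant. The only small items the paper makes explicit that you leave implicit are the harmless reduction to an infinite base field (needed for Lemma \ref{2.6}) and, in the $r=0$ case, the check $d_1+\cdots+d_n+l-n+1\geq D\geq l$ so that this single quantity absorbs the outer $\max$.
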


\begin{proof}
	Without loss of generality we may assume $\mathbb{K}$ is infinite. If $r=0$, notice that $d_1+\cdots+d_n+l-n+1\geq D\geq l$, so we are done by Theorem \ref{4.7}, Theorem \ref{6.4}, and Lemma \ref{7.1}.
	
	Assume $r\geq 1$. Let $f_1,\dots,f_s$ be a generating set of $M$ with $D=\text{max}\{\text{deg}(f_i):i=1,\dots,s\}$. We may assume $D\geq 2$, since otherwise it is easy to see that the Gr\"{o}bner basis degree of $M$ is bounded by $1$. 
	By Lemma \ref{2.6}, Fitt$_0(F/M)\subseteq M:_S F$ contains a regular sequence $g_1,\dots,g_{n-r}$ of degrees $d_1\geq \cdots \geq d_{n-r}$. Then $g_ie_j\in M$ for $i=1,\dots,n-r$ and $j=1,\dots, m$, and so $\{g_ie_j:i=1,\dots,n-r, j=1,\dots,m\}\cup\{f_1,\dots,f_s\}$ is a generating set of $M$. Let $J=(x_1^{d_1},\dots,x_{n-r}^{d_{n-r}})$ and let $P$ be a $D$-exact cone decomposition of $N_{JF}$ with Macaulay constants $b_0,\dots,b_{n+1}$. By Corollary \ref{7.6}, $1+\text{deg}(P^{+})=b_1$ is bounded by $2\left[\frac{1}{2}(d_1\cdots d_{n-r}m+D) \right]^{2^{r-1}}$ which is greater than $d_1+\cdots+d_{n-r}+l-n+1$ and $l$. Hence by Theorem \ref{4.7} and Theorem \ref{6.4}, the reduced Gr\"{o}bner basis degree is bounded by 
	\[
	\begin{split}
	\text{deg}(G)
	&\leq \text{max}\{1+\text{deg}(P^{+}), d_1+\cdots+d_{n-r}+l-n+1, l\}\\
	&\leq 2\left[\frac{1}{2}(d_1\cdots d_{n-r}m+D) \right]^{2^{r-1}}.
	\end{split}
	\]
\end{proof}

\begin{theorem} \label{7.8}
	Let $M\subsetneq F$ be a graded submodule generated by homogeneous elements $f_1,\dots,f_s$ with degrees $D_1\geq \cdots \geq D_s$, $D=D_1\geq l$, and dim$(F/M)=r$. 
	
	If $r=0$, then the degree of the reduced Gr\"{o}bner basis $G$ of $M$ for any monomial order on $F$ is bounded by
	\[
	\text{deg}(G) \leq \left(D_1+\cdots+D_m-\sum_{j=1}^{m}\text{deg}(e_j)\right)n+l-n+1 \leq Dmn-n+1.
	\]
	
	If $r\geq 1$, then the degree of the reduced Gr\"{o}bner basis $G$ of $M$ for any monomial order on $F$ is bounded by
	\[
	\begin{split}
	deg(G)
	&\leq 2\left[\frac{1}{2}((D_1+\cdots+D_m-\sum_{j=1}^{m}\text{deg}(e_j))^{n-r}m+D) \right]^{2^{r-1}}\\
	&\leq 2\left[\frac{1}{2}\left((Dm)^{n-r}m+D\right) \right]^{2^{r-1}}.
	\end{split}
	\]
\end{theorem}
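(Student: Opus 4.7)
The strategy is to reduce Theorem 7.8 to Theorem \ref{7.7} by invoking Lemma \ref{2.7} to bound the generating degrees of $\text{Fitt}_0(F/M)$ linearly in the generating degrees of $M$. Let $f_1,\dots,f_s$ generate $M$ with degrees $D_1\ge\cdots\ge D_s$, and set $D':=D_1+\cdots+D_m-\sum_{j=1}^{m}\text{deg}(e_j)$, which satisfies $D'\le Dm$. When $\text{Fitt}_0(F/M)\ne 0$, Lemma \ref{2.7} furnishes a minimal homogeneous generating set $p_1,\dots,p_k$ of $\text{Fitt}_0(F/M)$ whose degrees $d_1\ge\cdots\ge d_k$ are all bounded by $D'$.

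For the case $r\ge 1$, I would apply Theorem \ref{7.7} directly to obtain
\[
\text{deg}(G)\le 2\left[\tfrac12\bigl(d_1\cdots d_{n-r}\,m+D\bigr)\right]^{2^{r-1}},
\]
and then substitute $d_i\le D'$ (and subsequently $D'\le Dm$) to recover both asserted inequalities.

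For the case $r=0$, Theorem \ref{7.7} gives $\text{deg}(G)\le d_1+\cdots+d_n+l-n+1\le nD'+l-n+1$. Passing to the cleaner bound $Dmn-n+1$ amounts to verifying $nD'+l\le Dmn$, equivalently $l\le n(Dm-D')$. This follows from the identity
\[
Dm-D'=\sum_{j=1}^{m}(D-D_j)+\sum_{j=1}^{m}\text{deg}(e_j)\ge\sum_{j=1}^{m}\text{deg}(e_j)\ge l,
\]
where the last inequality holds because some $\text{deg}(e_j)$ equals $l$; hence $n(Dm-D')\ge nl\ge l$ whenever $n\ge 1$.

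The only care needed is the edge case $\text{Fitt}_0(F/M)=0$, which is outside the hypothesis of Lemma \ref{2.7}. Since $S$ is reduced and $\sqrt{\text{Fitt}_0(F/M)}=\sqrt{\text{ann}_S(F/M)}$, vanishing forces $\text{ann}_S(F/M)=0$ and hence $r=n$; then $n-r=0$, the product $d_1\cdots d_{n-r}$ is empty, and Theorem \ref{7.7}'s estimate depends on no Fitting degree, so the bound still goes through. There is no substantive obstacle: the entire argument is an arithmetic reduction resting on the linear Fitting-degree estimate of Lemma \ref{2.7}.
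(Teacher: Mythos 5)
Your proposal is correct and follows essentially the same route as the paper: bound the generating degrees of $\mathrm{Fitt}_0(F/M)$ via Lemma \ref{2.7} and then invoke Theorem \ref{7.7}. The extra care you take in spelling out the arithmetic $D'n+l\leq Dmn$ and in treating the edge case $\mathrm{Fitt}_0(F/M)=0$ (forcing $r=n$, so the empty product makes the bound vacuous of Fitting data) is a welcome clarification but not a deviation from the paper's argument.
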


\begin{proof}
	Choose a minimal homogeneous generating set of Fitt$_0(M)$ with degrees $d_1\geq \cdots \geq d_{k}$ and use Lemma \ref{2.7} to bound $d_1,\dots,d_{n-r}$, then apply Theorem \ref{7.7}.
\end{proof}

A bound only depending on $n$, $m$, and $D$ can be easily deduced from Theorem \ref{7.8}. If $r=n$, then deg$(G)\leq 2\left[\frac{1}{2}(m+D) \right]^{2^{n-1}}$. If $r\leq n-1$, the bound decreases when $r$ decreases, so deg$(G)\leq 2\left[\frac{1}{2}(Dm^2+D) \right]^{2^{n-2}}$. Picking a bound that is greater than both, we have:
\begin{corollary} \label{7.9}
	Let $M\subsetneq F$ be a graded submodule generated by homogeneous elements with maximum degree $D\geq l$. Then the degree of the reduced Gr\"{o}bner basis $G$ for any monomial order on $F$ is bounded by
	\[
	\text{deg}(G)\leq 2(Dm)^{2^{n-1}}.
	\]
\end{corollary}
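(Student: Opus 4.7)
The plan is to apply Theorem~\ref{7.8} and verify that the bound it yields, viewed as a function of $r$, never exceeds $2(Dm)^{2^{n-1}}$. Write
\[
B(r) := 2\left[\tfrac{1}{2}\bigl((Dm)^{n-r}m + D\bigr)\right]^{2^{r-1}} \qquad (1 \le r \le n),
\]
with the convention $(Dm)^{0} = 1$ when $r = n$, so that Theorem~\ref{7.8} gives $\deg(G) \le B(r)$ for $r \ge 1$ and the polynomial bound $Dmn - n + 1$ for $r = 0$. I would handle three regimes separately: $r = n$, $1 \le r \le n-1$, and $r = 0$.

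For $r = n$, the target inequality $B(n) \le 2(Dm)^{2^{n-1}}$ reduces to $(m+D)/2 \le Dm$, which is elementary for $D, m \ge 1$. For the middle range, I would verify the monotonicity claim asserted in the paragraph preceding the corollary, namely $B(r) \le B(r+1)$ whenever $1 \le r \le n-2$. Taking $2^{r-1}$-th roots turns this into the inequality
\[
\left[\tfrac{1}{2}\bigl((Dm)^{n-r-1}m + D\bigr)\right]^{2} \ \ge\ \tfrac{1}{2}\bigl((Dm)^{n-r}m + D\bigr),
\]
and since $(Dm)^{n-r-1} \ge Dm$ in this range, expanding the square shows that the leading term $(Dm)^{2(n-r-1)} m^{2}$ already dominates the right-hand side. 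Hence the maximum of $B$ on $\{1,\dots,n-1\}$ is attained at $r = n-1$, with value $B(n-1) = 2[D(m^2+1)/2]^{2^{n-2}}$, and a short computation using $(m^2+1)/2 \le Dm^2$ gives $B(n-1) \le 2(Dm)^{2^{n-1}}$.

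Finally, for $r = 0$ the polynomial bound $Dmn - n + 1$ is trivially at most $2(Dm)^{2^{n-1}}$ for $n \ge 1$. Combining the three regimes completes the proof. The only mildly technical step is the middle-range monotonicity, but since the required inequality is a polynomial inequality in $D$ and $m$ whose leading term plainly dominates, this is not a serious obstacle; everything else is routine comparison.
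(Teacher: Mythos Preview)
Your approach is essentially the paper's own: the proof in the paper is precisely the brief paragraph preceding the corollary, which splits into the cases $r=n$ and $r\le n-1$, invokes the monotonicity of $B(r)$ in the latter range to reduce to $r=n-1$, and then observes that $2(Dm)^{2^{n-1}}$ majorizes both resulting expressions. One small caution: your justification that ``the leading term $(Dm)^{2(n-r-1)}m^{2}$ already dominates the right-hand side'' is not literally true for small parameters (e.g.\ $D=2$, $m=1$, $r=n-2$), so when you carry this out you will need the full expansion of the square rather than only its leading term---but, as you say, this is routine and the inequality does hold.
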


\section{Non-graded case}

To solve the non-graded case, it seems natural to homogenize $M$ using an additional variable $t$ and deduce the non-graded bound from the graded bound simply by replacing $n$ with $n+1$. However this approach is false for a dimension-dependent bound since homogenizing an ideal may increase the dimension (see \citet[Example 3.4]{mayr}). This problem can be fixed by the following lemma which follows from \citet[Lemma 3.15]{sombra} and the proof of \citet[Theorem 4.19]{binaei}

\begin{lemma}[\cite{binaei}] \label{8.2}
	Let $\mathbb{K}$ be an infinite field and $I\subsetneq S=\mathbb{K}[x_1,\dots,x_{n}]$ be an ideal with dim$(S/I)=r$ generated by polynomials $p_1,\dots,p_k$ of degrees $d_1\geq \cdots \geq d_k$. Then there are polynomials $g_1,\dots,g_{n-r}\in I$ such that $g_1^h,\dots,g_{n-r}^h\in I^h$ form a regular sequence and deg$(g_i)\leq d_1\cdots d_{n-r}$ for $i=1,\dots,n-r$.
\end{lemma}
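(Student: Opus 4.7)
The plan is to combine the Bezout-type degree bound coming from Sombra's Lemma 3.15 with a generic coordinate change that forces homogenization to preserve the regular sequence property, following the strategy of Binaei's Theorem 4.19. I would proceed in three steps.

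First, I would invoke Sombra's Lemma 3.15 applied to $I$ to produce polynomials $\tilde{g}_1,\dots,\tilde{g}_{n-r}\in I$ with $\deg \tilde{g}_i \leq d_1\cdots d_{n-r}$ forming a regular sequence in $S$, constructed so that each intermediate ideal $(\tilde{g}_1,\dots,\tilde{g}_i)$ is unmixed of codimension exactly $i$. Morally, each $\tilde{g}_i$ is a suitable $\mathbb{K}$-linear combination of products of the original generators $p_j$ chosen to avoid the finitely many associated primes of the previous partial ideal; the Bezout-type bound arises because these products can contribute the factor $d_1\cdots d_{n-r}$.

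Second, I would perform a generic $\mathbb{K}$-linear change of coordinates in the variables $x_1,\dots,x_n$ to obtain $g_1,\dots,g_{n-r}\in I$. A linear change of coordinates does not affect degrees, does not change the ideal $I$, and preserves the regular-sequence and unmixedness properties, so all conclusions from the first step still hold for the $g_i$. The reason for this change is to put the variety $V(g_1,\dots,g_i)\subseteq\mathbb{A}^n$ into generic position relative to the hyperplane at infinity $V(t)\subseteq\mathbb{P}^n$.

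Finally, I would verify that $g_1^h,\dots,g_{n-r}^h$ form a regular sequence in $S[t]$. Since $S[t]$ is Cohen--Macaulay, this is equivalent to $(g_1^h,\dots,g_i^h)$ being unmixed of codimension $i$ in $S[t]$ for every $i$. Its minimal primes split into two classes: projective closures of the minimal primes of $(g_1,\dots,g_i)\subseteq S$, which automatically have codimension $i$ by the first step, and extra components supported on the hyperplane $V(t)$ at infinity. The main obstacle, and the technical heart of Binaei's proof of Theorem 4.19, is ruling out extra components of codimension $\leq i$ supported at infinity: for a generic linear change of coordinates the associated primes at infinity have codimension strictly greater than $i$, so unmixedness is preserved under homogenization. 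Since $\mathbb{K}$ is infinite, the set of admissible linear changes is a non-empty Zariski-open subset of the space of invertible $n\times n$ matrices over $\mathbb{K}$, so a valid choice exists, and the resulting $g_1,\dots,g_{n-r}$ satisfy all conclusions of the lemma.
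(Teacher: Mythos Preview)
Your proposal is correct and follows exactly the route the paper indicates: the paper does not give its own proof but simply attributes the lemma to \cite[Lemma 3.15]{sombra} combined with the proof of \cite[Theorem 4.19]{binaei}, which is precisely the two-ingredient strategy you sketch. Your outline of why a generic linear change of coordinates ensures no low-codimension components appear at infinity after homogenization is the content of Binaei's argument, so there is nothing to add.
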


We will apply Lemma \ref{8.2} to the ideal Fitt$_0(F/M)$ to get a regular sequence $g_1^h,\dots,g_{n-r}^h\in \text{Fitt}_0(F/M)^h$. If $M$ is generated by elements $f_1,\dots,f_s$, we consider the graded module $\tilde{M}\subseteq M^h$ generated by $\{g_i^he_j:i=1,\dots,n-r, j=1,\dots,m\}\cup\{f_1^h,\dots,f_s^h\}$. Then since the dehomogenization of a Gr\"{o}bner basis of $\tilde{M}$ is a Gr\"{o}bner basis of $M$, it suffices to bound the Gr\"{o}bner basis degree of $\tilde{M}$. Now all of our previous tools can be applied as $\tilde{M}$ is graded. 

Notice that if $M=I$ is an ideal in $S$, then as $I=\text{Fitt}_0(S/I)$, Theorem \ref{8.3} gives a bound that is sharper than Mayr and Ritscher's bound (see Theorem \ref{1.2}).

Recall that $F$ is a free module over $\mathbb{K}[x_1,\dots,x_n]$ with basis elements $e_1,\dots,e_m$ with deg$(e_j)\geq 0$ for all $j$ and $l=\text{max}\{\text{deg}(e_j):j=1,\dots, m\}$.
\begin{theorem} \label{8.3}
	Let $M\subsetneq F$ be a submodule generated by elements of maximum degree $D$ with $D\geq l$ and dim$(F/M)=r$. Let Fitt$_0(F/M)$ be generated by polynomials $p_1,\dots,p_k$ of degrees $d_1\geq \cdots \geq d_k$. Then the degree of the reduced Gr\"{o}bner basis $G$ for any monomial order on $F$ is bounded by
	\[
	\text{deg}(G) \leq 2\left[\frac{1}{2}\left((d_1\cdots d_{n-r})^{n-r}m+D\right) \right]^{2^{r}}.
	\]
\end{theorem}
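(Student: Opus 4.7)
The plan is to reduce to the graded setting by homogenization, following the Mayr--Ritscher strategy adapted to modules. The key subtlety, as noted just before the theorem, is that homogenizing $M$ can raise its codimension, so one cannot simply apply the graded bound (Theorem \ref{7.7}) to $M^h$; instead, one constructs a carefully chosen intermediate graded submodule $\tilde{M}\subseteq M^h$ of $F^h$ that retains the right codimension, and then applies the graded machinery to $\tilde{M}$ over $S[t]=\mathbb{K}[x_1,\ldots,x_n,t]$.

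First, without loss of generality assume $\mathbb{K}$ is infinite, and apply Lemma \ref{8.2} to the ideal $\text{Fitt}_0(F/M)$ to obtain polynomials $g_1,\ldots,g_{n-r}\in\text{Fitt}_0(F/M)$ whose homogenizations $g_1^h,\ldots,g_{n-r}^h$ form a regular sequence in $S[t]$, with each $\deg(g_i)\leq d_1\cdots d_{n-r}$. Letting $f_1,\ldots,f_s$ generate $M$ with maximum degree $D$, define the graded submodule
\[
\tilde{M}:=\sum_{i,j}S[t]\cdot g_i^h e_j+\sum_{k}S[t]\cdot f_k^h\subseteq F^h.
\]
Since each $g_i\in\text{Fitt}_0(F/M)\subseteq\text{ann}_S(F/M)$, the element $g_i e_j$ lies in $M$, so $g_i^h e_j=(g_i e_j)^h\in M^h$. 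Hence $\tilde{M}$ sits between $\sum_k S[t]f_k^h$ and $M^h$ as required by Lemma \ref{2.9}, and in particular $\tilde{M}\subsetneq F^h$. Moreover $g_1^h,\ldots,g_{n-r}^h$ lies in $\tilde{M}:_{S[t]}F^h$, since each $g_i^h e_j\in\tilde{M}$.

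Next, replicate the proof of Theorem \ref{7.7} for $\tilde{M}\subsetneq F^h$ over the $(n{+}1)$-variable ring $S[t]$, using the already-constructed regular sequence in place of the output of Lemma \ref{2.6}. Theorem \ref{4.7} produces an $l$-standard cone decomposition $Q$ of $N_{\tilde{M}}$ whose degree bounds the Gr\"obner basis degree of $\tilde{M}$; Theorem \ref{6.4} then bounds $1+\deg(Q)$ by $1+\deg(P^+)$, where $P$ is a $D$-exact cone decomposition of $N_{JF^h}$ for the pure-power ideal $J=(y_1^{\deg g_1^h},\ldots,y_{n-r}^{\deg g_{n-r}^h})\subseteq S[t]$ on any $n-r$ of the $n+1$ variables. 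Since $J$ has $n-r$ generators in $S[t]$, the ``dimension'' parameter $r$ in Corollary \ref{7.6} becomes $(n+1)-(n-r)=r+1$, yielding
\[
b_1\leq 2\bigl[\tfrac{1}{2}\bigl(\deg(g_1^h)\cdots\deg(g_{n-r}^h)\cdot m+D\bigr)\bigr]^{2^{(r+1)-1}}\leq 2\bigl[\tfrac{1}{2}\bigl((d_1\cdots d_{n-r})^{n-r}m+D\bigr)\bigr]^{2^r}.
\]
Finally, Lemma \ref{2.9} ensures that dehomogenizing a Gr\"obner basis of $\tilde{M}$ with respect to $\prec^\prime$ yields a Gr\"obner basis of $M$ with respect to $\prec$, and dehomogenization cannot increase degrees, so the reduced Gr\"obner basis of $M$ inherits the bound.

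The main obstacle is controlling the degrees of the regular sequence after homogenization. A regular sequence in $\text{Fitt}_0(F/M)$ of degrees $d_1,\ldots,d_{n-r}$ (the output of Lemma \ref{2.6}) need not remain regular after homogenization in $S[t]$, which is precisely why Lemma \ref{8.2} must be invoked: it produces possibly different elements in $\text{Fitt}_0(F/M)$, of degree at most $d_1\cdots d_{n-r}$, whose homogenizations are regular in $S[t]$. This single exponentiation is exactly what introduces the factor $(d_1\cdots d_{n-r})^{n-r}$ in the final bound, compared with the linear factor $d_1\cdots d_{n-r}$ in the graded case, while the outer exponent shifts from $2^{r-1}$ to $2^r$ because the homogenized setup has $n+1$ variables and dimension $r+1$ instead of $n$ and $r$.
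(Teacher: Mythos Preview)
Your proof is correct and follows essentially the same approach as the paper: apply Lemma~\ref{8.2} to $\text{Fitt}_0(F/M)$, form the intermediate graded module $\tilde{M}\subseteq F^h$, invoke Theorems~\ref{4.7} and~\ref{6.4} and Corollary~\ref{7.6} over $S[t]$ with dimension parameter $r+1$, and descend via Lemma~\ref{2.9}. The only minor elision is that you suppress the comparison showing $b_1$ dominates the secondary terms $\tilde d_1+\cdots+\tilde d_{n-r}+l-n$ and $l$ coming from Theorems~\ref{6.4} and~\ref{4.7}, but since you explicitly invoke ``replicate the proof of Theorem~\ref{7.7}'' this is adequately covered.
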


\begin{proof}
	Without loss of generality we may assume $\mathbb{K}$ is infinite. Fix a monomial order $\prec$ on $F$ and let $\prec^\prime$ be its extension on $F^h$ (defined in Section 2.5). Let $M$ be generated by $f_1,\dots,f_s\in F$ with $D=\text{max}\{\text{deg}(f_i):i=1,\dots,s\}$, and let Fitt$_0(F/M)$ be generated by $p_1,\dots,p_k\in S$ of degrees $d_1\geq \cdots \geq d_k$. By Lemma \ref{8.2}, there exists polynomials $g_1,\dots,g_{n-r}\in \text{Fitt}_0(F/M)$ with deg$(g_i):=\tilde{d}_i\leq d_1\cdots d_{n-r}$ for $i=1,\dots,n-r$, and $g_1^h,\dots,g_{n-r}^h$ form a regular sequence. Consider the graded module $\tilde{M}\subsetneq F^h$ generated by $\{g_i^he_j:i=1,\dots,n-r, j=1,\dots,m\}\cup\{f_1^h,\dots,f_s^h\}$, and let $\tilde{G}$ be a reduced Gr\"{o}bner basis of $\tilde{M}$ w.r.t. $\prec^\prime$. Notice that we have the inclusions $\oplus_{i=1}^{s} S[t]f_i^h\subseteq \tilde{M}\subseteq M^h$. By Lemma \ref{2.9}, $\tilde{G}^{deh}$ is a Gr\"{o}bner basis of $M$ whose degree is clearly bounded by the degree of $\tilde{G}$, so it suffices to bound $deg(\tilde{G})$.
	
	By Theorem \ref{4.7}, there exists a $l$-standard cone decomposition $Q$ of $N_{\tilde{M}}$ with deg$(\tilde{G})\leq \text{max}\{1+\text{deg}(Q),l\}$. Let $J=(x_1^{\tilde{d_1}},\dots,x_{(n+1)-(r+1)}^{\tilde{d}_{(n+1)-(r+1)}})$, then by Theorem \ref{6.4}, $1+\text{deg}(Q)\leq \text{max}\{b_1,\tilde{d}_1+\cdots+\tilde{d}_{n-r}+l-(n+1)+1\}$ where $b_1$ is the Macaulay constant of a $D$-exact cone decomposition $P$ of $N_{JF}$. As $r+1\geq 1$, by Corollary \ref{7.6} $b_1$ is bounded by $2\left[\frac{1}{2}(\tilde{d}_1 \cdots \tilde{d}_{n-r}m+D) \right]^{2^{r}}$, which is greater than $\tilde{d}_1+\cdots+\tilde{d}_{n-r}+l-(n+1)+1$ and $l$.
	
	Finally we combine the above inequalities to get
	\[
	\begin{split}
	\text{deg}(G)\leq \text{deg}(\tilde{G})
	&\leq \text{max}\{b_1,\tilde{d}_1+\cdots+\tilde{d}_{n-r}+l-(n+1)+1,l\}\\
	&\leq 2\left[\frac{1}{2}(\tilde{d}_1\cdots \tilde{d}_{n-r}m+D) \right]^{2^{r}}\\
	&\leq 2\left[\frac{1}{2}((d_1\cdots d_{n-r})^{n-r}m+D) \right]^{2^{r}}.
	\end{split}
	\]
\end{proof}

\begin{theorem}
	Let $M\subsetneq F$ be a submodule generated by elements of maximum degree $D$ with $D\geq l$ and $dim F/M=r$. Then the degree of the reduced Gr\"{o}bner basis $G$ for any monomial order on $F$ is bounded by
	\[
	\text{deg} (G) 
	\leq 2\left[\frac{1}{2}\left(\left( Dm  \right)^{(n-r)^{2}}m+D\right) \right]^{2^{r}}.
	\]
\end{theorem}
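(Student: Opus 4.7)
The plan is to deduce this cleaner statement directly from Theorem~\ref{8.3} by invoking Lemma~\ref{2.7} to bound the generating degrees of the zeroth Fitting ideal in terms of $D$ and $m$ alone. Theorem~\ref{8.3} already gives a bound of the desired shape, but in terms of the maximum degree $d_1$ of a minimal generating set of $\text{Fitt}_0(F/M)$. So the only remaining task is to replace $d_1$ (and hence the product $d_1 \cdots d_{n-r}$) by an expression involving only the input data $D$, $m$, $n$, $r$.

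Concretely, I would proceed as follows. If $\text{Fitt}_0(F/M) = 0$, the module case is degenerate (the annihilator of $F/M$ has radical zero, forcing $\dim(F/M) = n$) and the bound reduces to the $r=n$ case where it is easy to verify directly; so assume $\text{Fitt}_0(F/M) \neq 0$. Apply Lemma~\ref{2.7} in the non-graded version: a minimal generating set of $\text{Fitt}_0(F/M)$ has maximum degree at most $Dm$. In particular, taking a minimal generating set $p_1,\dots,p_k$ with degrees $d_1 \geq \cdots \geq d_k$, we have $d_i \leq Dm$ for all $i$, and therefore
\[
(d_1 \cdots d_{n-r})^{n-r} \leq (Dm)^{(n-r)^2}.
\]
Substituting this into the bound provided by Theorem~\ref{8.3} yields
\[
\text{deg}(G) \leq 2\left[\frac{1}{2}\left((d_1\cdots d_{n-r})^{n-r}m+D\right)\right]^{2^{r}} \leq 2\left[\frac{1}{2}\left((Dm)^{(n-r)^{2}}m+D\right)\right]^{2^{r}},
\]
which is the asserted bound.

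Since both of the ingredients (Theorem~\ref{8.3} and Lemma~\ref{2.7}) are already established, there is essentially no obstacle: the proof is a one-line substitution together with a corner case check when $\text{Fitt}_0(F/M) = 0$ (equivalently, when there are fewer than $m$ generators of $M$ mapping onto $F$ in a way that could produce nonzero $m \times m$ minors). The only mildly subtle point is that Lemma~\ref{2.7} is stated to require $s \geq m$; if this fails one can always pad the generating set of $M$ with zeros so as to make the lemma formally applicable, and the bound $Dm$ remains valid. Apart from this bookkeeping, the theorem follows immediately from the preceding results.
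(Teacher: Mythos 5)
Your proof is correct and takes essentially the same route as the paper, which likewise bounds $d_1,\dots,d_{n-r}$ by $Dm$ via Lemma~\ref{2.7} and substitutes into Theorem~\ref{8.3}. Your extra remarks on the edge cases are sound but not strictly needed: if $s<m$ then every $m\times m$ minor of the presentation matrix vanishes, so $\text{Fitt}_0(F/M)=0$ automatically and the padding device is moot, while $\text{Fitt}_0(F/M)=0$ indeed forces $r=n$, in which case the empty-product convention makes both Theorem~\ref{8.3} and the stated bound read the same.
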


\begin{proof}
	Choose a minimal generating set of Fitt$_0(M)$ with degrees $d_1\geq \cdots \geq d_k$ and use Lemma \ref{2.7} to bound $d_1,\dots, d_{n-r}$, then apply Theorem \ref{8.3}.
\end{proof}

To get a bound that does not depend on the dimension, we replace $n$ by $n+1$ in the bound given by Corollary \ref{7.9}. 
\begin{corollary}
	Let $M\subsetneq F$ be a submodule generated by elements of maximum degree $D$ with $D\geq l$. Then the degree of the reduced Gr\"{o}bner basis $G$ of $M$ for any monomial order on $F$ is bounded by
	\[
	\text{deg}(G)\leq 2(Dm)^{2^{n}}.
	\]
\end{corollary}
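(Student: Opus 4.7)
The plan is to adapt the homogenization argument used in the proof of Theorem~\ref{8.3}, but in this dimension-independent setting it becomes considerably shorter because we do not have to worry about the possible jump in dimension caused by homogenization.

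First, I would fix a monomial order $\prec$ on $F$ and let $\prec'$ be its extension to $F^h=S[t]e_1\oplus\cdots\oplus S[t]e_m$ defined in Section~2.5. Let $f_1,\dots,f_s$ generate $M$ with $\max_i\deg(f_i)\leq D$, and set
\[
\tilde{M}\;:=\;S[t]f_1^h+\cdots+S[t]f_s^h\;\subseteq\;F^h.
\]
Then $\tilde{M}$ is a graded submodule of the free $S[t]$-module $F^h$, generated by homogeneous elements of maximum degree $D$. Because the basis elements $e_j$ of $F^h$ retain their degrees from $F$, the hypothesis $D\geq l$ is preserved; moreover $\tilde{M}\subsetneq F^h$ since $\tilde{M}\subseteq M^h$, and $M\subsetneq F$ forces $M^h\subsetneq F^h$.

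Next, I would apply Corollary~\ref{7.9} to the graded module $\tilde{M}$, with the ambient polynomial ring now having $n+1$ variables. Writing $\tilde{G}$ for the reduced Gr\"obner basis of $\tilde{M}$ with respect to $\prec'$, this gives
\[
\deg(\tilde{G})\;\leq\;2(Dm)^{2^{(n+1)-1}}\;=\;2(Dm)^{2^n}.
\]

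Finally, I would invoke Lemma~\ref{2.9} with $N=\tilde{M}$; the required sandwich $S[t]f_1^h+\cdots+S[t]f_s^h\subseteq\tilde{M}\subseteq M^h$ holds by construction, so $\tilde{G}^{deh}$ is a Gr\"obner basis of $M$ with respect to $\prec$. Since dehomogenization (substituting $t=1$) never increases the degree of an element, and since passing from any Gr\"obner basis to its reduced form only decreases the degrees of its members, the reduced Gr\"obner basis $G$ of $M$ satisfies $\deg(G)\leq\deg(\tilde{G}^{deh})\leq\deg(\tilde{G})\leq 2(Dm)^{2^n}$. There is essentially no obstacle here: the only things to verify are that the hypotheses of Corollary~\ref{7.9} carry over to $\tilde{M}$, which is immediate, and that dehomogenization is degree-nonincreasing, which follows directly from its definition.
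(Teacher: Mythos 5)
Correct, and essentially the paper's own argument: the paper's proof of this corollary is the single line ``replace $n$ by $n+1$ in the bound given by Corollary~\ref{7.9},'' and you have unpacked exactly that --- homogenize the generators to obtain a graded $\tilde{M}\subseteq F^h$, apply Corollary~\ref{7.9} over $n+1$ variables to get $2(Dm)^{2^n}$, and dehomogenize via Lemma~\ref{2.9}. The step you invoke at the end, that passing from the dehomogenized Gr\"obner basis $\tilde{G}^{deh}$ to the reduced Gr\"obner basis of $M$ cannot raise the degree, is taken for granted in the paper's own proof of Theorem~\ref{8.3} as well, so you are on the same footing as the paper there.
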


\section*{Acknowledgement}

The author would like to thank her advisor Giulio Caviglia for proposing this problem.

\bibliographystyle{elsarticle-harv}
\bibliography{Module_Grobner_basis_bound}{}

\end{document}